\DeclarePairedDelimiter\floor{\lfloor}{\rfloor}
\newtheoremstyle{mystyle}
{5pt} 
{5pt} 
{\itshape} 
{\parindent} 
{\itshape} 
{:} 
{.5em} 
{} 
\theoremstyle{mystyle}
\newtheorem{definition}{Definition}[section]
\newtheorem{assumption}[definition]{Assumption}
\newtheorem{thm}{Theorem}[section]
\newtheorem{lemma}[thm]{Lemma}
\newtheorem{prop}[thm]{Proposition}
\newtheorem{col}[thm]{Corollary}
\newtheorem{remark}[thm]{Remark}
\newcommand{\norm}[1]{\left\lVert#1\right\rVert}
\title{Models of Gradient Type \\with Sub-Quadratic Actions}
\author{By Zichun Ye \\
	\itshape{University of British Columbia}
}
\begin{document}
	\maketitle
	
	\pagenumbering{arabic}

	\pagenumbering{arabic}
	\abstract{We consider models of gradient type, which are the densities of a collection of real-valued random variables $\phi :=\{\phi_x: x \in \Lambda\}$ given by $Z^{-1}\exp({-\sum\nolimits_{j \sim k}V(\phi_j-\phi_k)})$. We focus our study on the case that $V(\nabla\phi) = [1+(\nabla\phi)^2]^\alpha$ with $0 < \alpha < 1/2$, which is a non-convex potential. We introduce an auxiliary field $t_{jk}$ for each edge and represent the model as the marginal of a model with log-concave density. Based on this method, we prove that finite moments of the fields $\left<[v \cdot \phi]^p \right>$ are bounded uniformly in the volume. This leads to the existence of infinite volume measures. Also, every translation invariant, ergodic
	infinite volume Gibbs measure for the potential $V$ above scales to a Gaussian free field.
	}

\section{Introduction}
\hspace*{\parindent} 
	The models of gradient type are a class of models arising in equilibrium statistical mechanics, like random interface model for phase coexistence \cite{Fun03}.    
	For a finite subset $\Lambda$ of $\mathbb{Z}^d$, a model of gradient type is a collection of real-valued random variables $\phi :=\{\phi_x: x \in \Lambda\}$ with distribution given by
	\begin{equation}\label{generalmodel}
		\frac{1}{Z}\exp\left({-\sum\limits_{jk \in E(\Lambda)}V(\phi_j-\phi_k)}\right)\prod_{x \in \Lambda}d\phi_x,
	\end{equation}
	where $E(\Lambda)$ is the set of unordered pairs of nearest neighbors in $\Lambda$, and $d\phi_x$ is the Lebesgue measure and $V$ is an even, measurable function, called the potential function.
	It should be noticed that the Hamiltonian $H_\Lambda := \sum_{jk}V(\phi_j-\phi_k)$ does not change under the translation $\phi \to \phi+c$ where c is a constant, so boundary conditions are introduced to break this symmetry which would otherwise make it impossible to normalize the distribution. 
	
	In \cite{FS97}, Funaki and Spohn used this model to study the interface with energy $H_\Lambda$ and made it popular in the research of probability theory. Also see  \cite{DGI00} and \cite{Gia02} for more detail about physical interpretations of these models.
	The study by  Funaki and Spohn \cite{FS97} required strict convexity of $V$, namely:
	\begin{equation}\label{strictcon}
		c_- \le V''(x) \le c_+, x \in \mathbb{R}, \mathrm{for~some~} c_-, c_+ > 0.
	\end{equation}
	One example of this kind of $V$ is the quadratic function, under which the measure (\ref{generalmodel}) is Gaussian and the model becomes the massless free field. The condition of strict convexity on $V$ makes the model tractable because it implies the Brascamp- Lieb bound given in Theorem \ref{BLBthm} \cite{BL76} and the H-S random walk representation of the model \cite{DGI00}. 
	In \cite{CD12,CDM09}, C. Cotar et al. gave results for non-convex $V$. They studied a class of models with $V$ admitting the representation
	\begin{equation}\label{ccv}
		V(t) = V_0(t)+g_0(t)
	\end{equation}
	where $V_0$ satisfies (\ref{strictcon}) and $g_0 \in C^2(\mathbb{R})$ has a negative bounded second derivative. In \cite{BK07,BS11}, Marek Biskup, Roman Koteck{\'y} and Herbert Spohn gave results about a class of models with $V$ admitting the representation
	\begin{equation}\label{biskup}
		e^{-V(t)} = \int  \varrho (d\kappa) \exp\big[-\frac{1}{2}\kappa t^2 \big]
	\end{equation}
	where $\varrho$ is a positive measure with compact support in $(0, \infty)$. This class of $V$ is symmetric, but non-convex in general, for example, when $\varrho = p\delta_a+(1-p)\delta_b$ is two-point measure.
	
	In this paper, we discuss a special case with a non-convex potential function $V$. We focus our study on the special case of the measure defined by (\ref{generalmodel}) with
	\begin{equation}
		V(\nabla\phi) = [1+\beta(\nabla\phi)^2]^\alpha
	\end{equation}
	with  $0 < \alpha < \frac{1}{2}$. In this case, $V$ is not convex, while $V$ is convex when $\alpha > \frac{1}{2}$. We will follow the strategy of Biskup and Spohn. To be specific, we write $e^{-V}$ in the form (\ref{biskup}) as
	\begin{equation}
		\int e^{-\kappa[1+\beta(\nabla \phi)^2]}f(\kappa)d\kappa = e^{-V(\nabla \phi)},
	\end{equation}
	and then the model can be represented as
	\begin{equation}\label{expandform}
		\frac{1}{Z}e^{-\sum [1+\beta(\nabla \phi)^2]e^{t_{jk}}}f(e^{t_{jk}})e^{t_{jk}}\prod_{i \in \Lambda}d\phi_i\prod_{jk \in E}dt_{jk}.
	\end{equation}
	Here we make the substitution $\kappa \mapsto e^t$ so that the density has the nice property of log concavity.
	
	We are interested in this class of $V$ for several reasons. Firstly,  this class of measures has features in common with the massless free field, namely Markov property and Osterwalder Schrader positivity \cite{FIL78phase}. Secondly, as the model studied in \cite{BS11} has a phase transition, this model could also have phase transitions as they have a similar structure. Last but not least, when $\alpha = 1/2$, as studied in \cite{BS12}, the Hamiltonian  is similar to the one appearing in the limiting measure describing linearly edge reinforced random walk.

This paper is composed of two parts. In the first part, models of gradient type are precisely introduced. We consider the field $\phi$ on the sites of the lattice, where $\phi$ is called the height variables in latter context.  This system is called Gibbs measure. As the main result of this part, we will prove that the moments of the finite volume measure satisfy a uniform bound. As a result, we will prove that the infinite volume Gibbs measure exists for $d \ge 3$. Here we extend the proof of \cite{BS12} for the case $\alpha = 1/2$ to $\alpha \in (1, 1/2)$.
	
 In the second part, we are interested in the scaling limit of the models. We will show that every translation-invariant, ergodic infinite volume measures scales to a Gaussian free field. The proof is also based on an integral representation of our $V$ similar to (\ref{biskup}). Then we will represent our infinite volume measure as a mixture over Gaussian gradient measures with a random coupling constant $\omega_{jk}$ for each edge $jk$. Every individual Gaussian gradient measures has covariance being the inverse of the minus of the operator
	\begin{equation}
		(\mathcal{L}^\omega_Xf)(x) = \sum_{y \sim x}\omega(x, y)(f(y)-f(x)).
	\end{equation}
	Then the Gaussian measure can be analyzed by invoking a random walk representation; $\mathcal{L}^\omega_X$ is the generator of a random walk with symmetric random jump rates, known, equivalently, as a random conductance model. As a sketch of the proof, we will firstly extend the infinite volume measure to a new measure on the space of both fields $\phi$ and weights $\omega$. Then we will give the random walk representation of the mean and covariance matrix of infinite volume. Lastly, with the help of functional central limit theory (invariance principle in \cite{ADS14}) and heat kernel bound (in \cite{ADS15H}) of random conductance model, we prove the results of scaling limit. 

\section{Main Results}
\subsection{Bounds of the moments}
\hspace*{\parindent} We are interested in a hyper-surface embedded in $d + 1$ dimensional space $\mathbb{R}^{d+1}$. The hyper-surface is represented by a graph viewed from a fixed reference $d$-dimensional hyperplane $\Gamma$ located in the space $\mathbb{R}^{d+1}$. In other words, there are no overhangs and the location of the hyper-surface is described by the configuration $\phi = \{\phi(x) \in \mathbb{R}; x \in \Gamma\}$, where $\phi_x$ is the height of the hyper-surface above $x$. The hyperspace $\Gamma$ is discretized and taken as $\Gamma = \Lambda \subset \mathbb{Z}^d$. The height $\phi_x$ is not discretized. Let $\Omega_\Lambda = \mathbb{R}^{\Lambda}$ be the set of all configurations over $\Lambda$ and $\Omega = \Omega_{\mathbb{Z}^d}$ be the configurations over $\mathbb{Z}^d$.

We think of $\mathbb{Z}^d$ as a graph with edges
\begin{equation}
E = \Big\{\{j,k\} : j, k \in \mathbb{Z}^d, \norm{j-k}_2 = 1 \Big\}
\end{equation}
where $\norm{\cdot}_2$ is the Euclidean norm. We use the notation $jk = \{j, k\}$ for the undirected edges.
For a finite subset $\Lambda$ of $\mathbb{Z}^d$, let $E(\Lambda)$ be the set of edges with at least one vertex in $\Lambda$, namely,
\begin{equation}
E(\Lambda)=\{jk \in E | jk \cap \Lambda \not=\emptyset\}.
\end{equation} 
Given a finite set $\Lambda \subset \mathbb{Z}^d$ and the potential function $V(x) = [1+\beta x^2]^\alpha$, we define the Hamiltonian $H_\Lambda$ by
\begin{equation}\label{Hamia}
H_{\Lambda}(\phi) := \sum_{jk \in E(\Lambda)} V(\phi_j-\phi_k) = \sum_{jk \in E(\Lambda)} \left(\left(1+\beta(\phi_j-\phi_k\right)^2\right)^\alpha.
\end{equation}
Then for a boundary condition $\psi \in \Omega$, we define the finite volume Gibbs measure over $\Lambda$ given by
\begin{equation}\label{modela}
\mu^{\psi}_{\Lambda} := \frac{1}{Z_{\Lambda}(\psi_{\Lambda^c})}e^{-H_{\Lambda}(\phi\vee\psi)}\prod_{j \in \Lambda}d\phi_j .
\end{equation}
Here $(\phi\vee\psi)(k) = \phi_k$ for $k \in \Lambda$ and $=\psi_k$ for $k\in \Lambda^c$, and $Z_{\Lambda}(\psi_{\Lambda^c})$ is the normalization constant defined by
\begin{equation}
Z_{\Lambda}(\psi_{\Lambda^c}) = \int_{\mathbb{R}^\Lambda} e^{-H_{\Lambda}(\phi)}\prod_{j \in \Lambda}d\phi_j . 
\end{equation}

The Hamiltonian $H$ defined in (\ref{Hamia}) is called the massless Hamiltonian. The massive Hamiltonian is given by 
\begin{equation}\label{defHm}
H_{\Lambda, \epsilon}(\phi) = H_{\Lambda }(\phi)+\epsilon\sum_{j \in \Lambda} \phi_j^2,
\end{equation}
where $\epsilon > 0$.
This provides an approximation to the massless Hamiltonian when letting $\epsilon \to 0$. In the massive case, we introduce the periodic boundary condition. Let $\Lambda$ be a cuboid in $\mathbb{Z}^d$ with $N = (N_1, \cdots, N_d)$ being the side length. Define the graph $\mathbb{T}_N$ such that the vertex set $V(\mathbb{T}_N) = \Lambda$ and the edge set $E(\mathbb{T}_N)$ is the union of the edges of $\Lambda$ and the set of additional edges incident to vertices $x, y$ on opposite boundaries such that $y_i - x_i = N_i$ for some $i = 1, \cdots, d$. For $\phi \in \Omega_\Lambda$, we define the massive torus Hamiltonian $H^p_{\Lambda,  \epsilon}$ by
\begin{equation}
H^p_{\Lambda,  \epsilon}(\phi) = \sum_{jk \in E(\mathbb{T}_N)} V(\phi_j-\phi_k)+\epsilon\sum_{j \in \mathbb{T}_N} \phi_j^2
\end{equation}
where $E(\mathbb{T}_N)$ is the set of all edges in $\mathbb{T}_N$. Then the finite volume measure on $\mathbb{R}^\Lambda$ with periodic boundary condition is defined as 
\begin{equation}\label{model}
\mu^p_{\Lambda, \epsilon} := \frac{1}{Z_{\Lambda,\epsilon}}e^{-H^p_{\Lambda,  \epsilon}(\phi)}\prod_{j \in \Lambda}d\phi_j.
\end{equation}
where $\epsilon >  0$.

Now we introduce an auxiliary field $t$ on the edges. For $\Lambda$ and a boundary condition $\psi$, we define a Gaussian action
\begin{equation}\label{Gaction}
A^\psi(\phi, t) = \sum_{jk \in E(\Lambda)}\Big(1+\beta(\phi\vee\psi(j)-\phi\vee\psi(k))^2\Big)e^{t_{jk}}+\sum_{j \in \Lambda}\epsilon \phi_j^2, \quad\quad  \epsilon \ge 0.
\end{equation} 
Let $f_\alpha(x)$ be the unique positive density such that
\begin{equation}\label{lpstable}
\int_{0}^{\infty} e^{-\lambda x}f_\alpha(x) dx = e^{-\lambda^\alpha}.
\end{equation}
The existence, uniqueness and positivity of $f_\alpha(x)$ are proved in Chapter IX, section 11 in \cite{Yosida80}.  Consider the measure $\hat{\mu}^\psi_{\Lambda,\epsilon}$ on $\mathbb{R}^\Lambda\times\mathbb{R}^{E(\Lambda)}$ defined by
\begin{equation}\label{defextmeasure}
\hat{\mu}^\psi_{\Lambda,\epsilon}(d\phi, dt) := \frac{1}{Z^\psi_{\Lambda,\epsilon}(\alpha)} e^{-A^\psi(\phi, t)}\prod_{jk \in E(\Lambda)}\Big(f_\alpha(e^{t_{jk}})e^{t_{jk}}dt_{jk}\Big)\prod_{j \in \Lambda}d\phi_j,
\end{equation}
where the partition function $Z^\psi_{\Lambda,\epsilon}(\alpha)$ is defined to be
\begin{eqnarray}
Z^\psi_{\Lambda,\epsilon}(\alpha)
& = & \int e^{-A(\phi, t)}\prod_{jk \in E(\Lambda)}\Big(f_\alpha(e^{t_{jk}})e^{t_{jk}}dt_{jk}\Big)\prod_{j \in \Lambda}d\phi_j \label{ameasure}\\
&=&\int e^{-\sum\nolimits_{jk \in E(\Lambda)
	}\big(1+\beta (\phi_j-\phi_k)^2\big)^\alpha-\sum\nolimits_{j\in\Lambda}\epsilon\phi_j^2}\prod_{j \in \Lambda}d\phi_j. \label{withoutt}
\end{eqnarray}
By integrating all $t_{jk}$ over $\mathbb{R}$ we get (\ref{withoutt}) from (\ref{ameasure}).  Thus the $\phi$ marginal of $\hat{\mu}^\psi_{\Lambda, \epsilon}$ is the massive finite volume Gibbs measure $\mu^\psi_{\Lambda,\epsilon}$ with the Hamiltonian given in (\ref{defHm}).

For $\Lambda$ be a cuboid in $\mathbb{Z}^d$ with side length $N = (N_1, \cdots, N_d)$, we define the Gaussian action with periodic boundary condition by
\begin{equation}\label{Gactionperiodic}
A^p(\phi, t) = \sum_{jk \in E(\mathbb{T}_N)}\Big(1+\beta(\phi(j)-\phi(k))^2\Big)e^{t_{jk}}+\sum_{j \in \Lambda}\epsilon \phi_j^2, \quad\quad  \epsilon \ge 0.
\end{equation}
Similarly we define the measure $\hat{\mu}^p_{\Lambda,\epsilon}$ on $\mathbb{R}^{\mathbb{T}_N}\times\mathbb{R}^{E(\mathbb{T}_N)}$ defined by
\begin{equation}\label{defextmeasureperiodic}
\hat{\mu}^p_{\Lambda,\epsilon}(d\phi, dt) := \frac{1}{Z^p_{\Lambda,\epsilon}(\alpha)} e^{-A^p(\phi, t)}\prod_{jk \in E(\mathbb{T}_N)}\Big(f_\alpha(e^{t_{jk}})e^{t_{jk}}dt_{jk}\Big)\prod_{j \in \Lambda}d\phi_j,
\end{equation}
where $Z^p_{\Lambda,\epsilon}(\alpha)$ is the partition function. Then $\phi$ marginal of $\hat{\mu}^p_{\Lambda, \epsilon}$ is $\mu^p_{\Lambda,\epsilon}$ in (\ref{model}).

To state the $t$ marginal of $\hat{\mu}^p_{\Lambda, \epsilon}$, let $[v; w] = \sum_jv_jw_j$ be the usual scalar product in $\mathbb{R}^\Lambda$, and write $[v; w] = v \cdot w$ for short. Then $D_{\Lambda, \epsilon}(t)$ is defined  by the quadratic form
\begin{equation}\label{defD}
[f; D_{\Lambda,\epsilon}(t)f] = \sum_{jk\in E(\mathbb{T}_N)}\beta(f_j - f_k)^2e^{t_{jk}}+\epsilon\sum_{j \in \Lambda} f_j^2
\end{equation}
for all $f: \Lambda \to \mathbb{R}$ satisfying periodic boundary condition. The eigenvalues of $D_{\Lambda,\epsilon}(t)$ are positive because $[f; D_{\Lambda,\epsilon}(t)f] > 0$ for $f\not=0$, and thus $D_{\Lambda,\epsilon}(t)$ is invertible in $\mathbb{R}^\Lambda$. Let $G_{\Lambda,\epsilon}(t) = (D_{\Lambda,\epsilon}(t))^{-1}$ be the Green's function.
By evaluating the integral over $\phi$ over $\mathbb{R}^\Lambda$ in (\ref{defextmeasure}), we have the $t$ marginal $\hat{\mu}^p_{\Lambda, \epsilon}$
\begin{equation}
\hat{\mu}^p_{\Lambda, \epsilon}(dt) = \frac{1}{Z'_{\Lambda,\epsilon}(\alpha)}\det[D_{\Lambda,\epsilon}(t)]^{-1/2}\prod_{jk \in E(\mathbb{T}_N)}\Big(\exp(-e^{t_{jk}}+t_{jk})f_\alpha(e^{t_{jk}})dt_{jk}\Big),\label{modelgeneral}
\end{equation}
where $Z'_{\Lambda,\epsilon}$ is the partition function for $t$ variable defined by
\begin{equation}
Z'_{\Lambda, \epsilon}(\alpha) = \int \det[D_{\Lambda,\epsilon}(t)]^{-1/2}\prod_{jk \in E(\mathbb{T}_N)}\Big(\exp(-e^{t_{jk}}+t_{jk})f_\alpha(e^{t_{jk}})dt_{jk}\Big).\label{modelgeneral1}
\end{equation}
Here we absorb the factors of $\pi$ into the new partition function $Z'_{\Lambda, \epsilon}$.

Let $\alpha < \frac{1}{2}$ and $\left<\cdot\right>_{\alpha, \Lambda,\epsilon}$ denote the expectation in $t$ and $\phi$ with respect to the measure $\hat{\mu}^p_{\Lambda, \epsilon}$ in (\ref{defextmeasureperiodic}).  
Our first result is the bounds on $\left<e^{\lambda t}\right>_{\alpha, \Lambda,\epsilon}$ for all $\lambda \in \mathbb{R}$. Notice that all these bounds are uniform in $\epsilon$ and $\Lambda$ and holds for all $d \ge 1$.

\begin{prop} \label{propbda}
	Assume $\alpha \in (0, \frac{1}{2})$. For $\lambda \in \mathbb{R}$, there is a constant $C(\lambda, \alpha)$ such that for all $jk \in E$,
	\begin{equation}\label{tbd}
	\left<e^{\lambda t_{jk}}\right>_{\alpha, \Lambda,\epsilon} \le C(\lambda, \alpha).
	\end{equation}
\end{prop}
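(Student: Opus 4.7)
The plan is to exploit two complementary conditionings built into~(\ref{defextmeasureperiodic}): conditional on $\phi$, the edge fields $\{t_{jk}\}$ are independent with explicit densities; conditional on $t$, the height field $\phi$ is a centered Gaussian with precision $2D_{\Lambda,\epsilon}(t)$. The first gives a clean formula for conditional moments of $\kappa_{jk}:=e^{t_{jk}}$ and bounds $\langle e^{\lambda t_{jk}}\rangle$ in terms of moments of $a_{jk}:=1+\beta(\phi_j-\phi_k)^2$; the second converts those moments into bounds on $\langle e^{-p\,t_{jk}}\rangle$ via an effective-resistance estimate, producing a bootstrap whose exponent contracts by the factor $1-\alpha$.

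First, from~(\ref{Gactionperiodic}), conditional on $\phi$ each $\kappa_{jk}$ has the Esscher-tilted stable density $p(\kappa\mid a)=\exp(a^\alpha-a\kappa)\,f_\alpha(\kappa)$ with $a:=a_{jk}\ge 1$, so the Laplace-transform identity~(\ref{lpstable}) gives $F_\lambda(a):=\mathbb{E}[\kappa^\lambda\mid a]=e^{a^\alpha}\int_0^\infty\kappa^\lambda e^{-a\kappa}f_\alpha(\kappa)\,d\kappa$. For integer $\lambda=n\ge 0$, Fa\`a di Bruno applied to $(-1)^n\partial_a^n e^{-a^\alpha}$ expresses $F_n(a)$ as a polynomial in $a^{\alpha-1},\ldots,a^{\alpha-n}$, each bounded by $1$ when $a\ge 1$ and $\alpha<1$; non-integer $\lambda>0$ then follows from $\kappa^\lambda\le 1+\kappa^{\lceil\lambda\rceil}$, and~(\ref{tbd}) is immediate upon taking the $\phi$-expectation. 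For $\lambda<0$, inserting $\kappa^\lambda=\Gamma(-\lambda)^{-1}\int_0^\infty s^{-\lambda-1}e^{-s\kappa}\,ds$ gives $F_\lambda(a)=\Gamma(-\lambda)^{-1}\int_0^\infty s^{-\lambda-1}e^{a^\alpha-(a+s)^\alpha}\,ds$; splitting the $s$-integral at $s=a$ and using the concavity bound $(a+s)^\alpha-a^\alpha\ge \alpha\cdot 2^{\alpha-1}\min\{a^{\alpha-1}s,\,s^\alpha\}$ on the two pieces yields $F_\lambda(a)\le c(\lambda,\alpha)(1+a^{(1-\alpha)|\lambda|})$.

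The task is then to bound $\langle a_{jk}^p\rangle$ with $p=(1-\alpha)|\lambda|$. Switching to the other conditioning, $\phi\mid t$ is centered Gaussian with precision $2D_{\Lambda,\epsilon}(t)$, and since the edge $jk$ alone provides a single-edge circuit of conductance $\beta e^{t_{jk}}$ in $D_{\Lambda,\epsilon}(t)$, the monotonicity $D\mapsto D^{-1}$ yields the $t$-uniform effective-resistance bound
$$\mathbb{E}\bigl[(\phi_j-\phi_k)^2\mid t\bigr]=\tfrac12(e_j-e_k)^\top D_{\Lambda,\epsilon}(t)^{-1}(e_j-e_k)\le(2\beta e^{t_{jk}})^{-1},$$
uniformly in all other $t_{e'}$ and in $\Lambda,\epsilon$. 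Wick's theorem and interpolation give $\langle a_{jk}^p\rangle\le c'(p)(1+\langle e^{-p\,t_{jk}}\rangle)$, producing the bootstrap
$$\langle e^{\lambda t_{jk}}\rangle_{\alpha,\Lambda,\epsilon}\le C(\lambda,\alpha)\bigl(1+\langle e^{-(1-\alpha)|\lambda|\,t_{jk}}\rangle_{\alpha,\Lambda,\epsilon}\bigr),\qquad\lambda<0.$$
Applied at $\lambda=-1$, and combined with the Jensen estimate $\langle a_{jk}^{1-\alpha}\rangle\le\langle a_{jk}\rangle^{1-\alpha}\le(1+\tfrac12\langle e^{-t_{jk}}\rangle)^{1-\alpha}$, this yields a self-consistent inequality $x\le C_1+C_2(1+C_3 x)^{1-\alpha}$ in $x:=\langle e^{-t_{jk}}\rangle$, whose strictly sublinear right side forces $x$ to be bounded by a constant depending only on $\alpha$ and $\beta$. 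One further application of the bootstrap then bounds $\langle e^{\lambda t_{jk}}\rangle$ for every $\lambda<0$ by a constant independent of $\Lambda$ and $\epsilon$. The main obstacle I expect is closing this last self-consistency step; it crucially relies on $1-\alpha<1$ and on the uniform-in-$t$ character of the effective-resistance bound, which is the structural payoff of the log-concave auxiliary-field representation introduced in~(\ref{expandform}).
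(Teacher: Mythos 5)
Your proposal is correct in outline and takes a genuinely different route from the paper. The paper integrates out $\phi$ first and works with the $t$-marginal (\ref{modelgeneral}): log-concavity of the effective action (via Lemma \ref{lemmalogcon} and Corollary \ref{flogconcave}) feeds the Brascamp--Lieb inequality to get a uniform variance bound and the sub-Gaussian estimate (\ref{firstbd}), and then a Ward identity from the shift $t_{jk}\to t_{jk}+b$, combined with the tail asymptotics of $f_\alpha$ from Appendix A (Corollary \ref{tailg}, Proposition \ref{secondbound}), pins down $\left<t_{jk}\right>$ from above and below. You instead exploit the two conditional structures directly: given $\phi$, the $\kappa_{jk}=e^{t_{jk}}$ are independent with tilted stable laws whose moments you read off from the Laplace-transform identity (\ref{lpstable}) alone (handling all $\lambda>0$ uniformly with no bootstrap, and giving $F_\lambda(a)\le c(1+a^{(1-\alpha)|\lambda|})$ for $\lambda<0$); given $t$, you use the same effective-resistance bound $[(\delta_j-\delta_k);G_{\Lambda,\epsilon}(t)(\delta_j-\delta_k)]\le\beta^{-1}e^{-t_{jk}}$ that the paper uses inside Lemma \ref{tmeanlower}, together with conditional Gaussianity, to close a contracting bootstrap in the exponent. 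This buys you independence from the detailed asymptotics of $f_\alpha$ near $0$ and $\infty$ and from Brascamp--Lieb altogether; the paper's route, in exchange, yields the stronger Gaussian-type estimate (\ref{firstbd}) on the moment generating function, though only the bound (\ref{tbd}) is used downstream. Two small points you should make explicit: the self-consistent inequality $x\le C_1+C_2(1+\tfrac12x)^{1-\alpha}$ only forces $x\le x^*(\alpha,\beta)$ once you know $x=\left<e^{-t_{jk}}\right><\infty$ a priori, which holds for fixed finite $\Lambda$ and $\epsilon>0$ because the $\phi$-marginal has Gaussian tails from the mass term; and for general $\lambda<0$ you need not one but finitely many iterations of the bootstrap (roughly $\log|\lambda|/|\log(1-\alpha)|$ of them) to drive the exponent $(1-\alpha)^k|\lambda|$ below $1$, after which Jensen reduces to the $\lambda=-1$ case. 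Neither is a genuine gap.
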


From Proposition \ref{propbda}, we prove the bound for the finite order moment of the $\phi$ field. To state the theorem, define the lattice Laplacian $-\Delta^p_\Lambda$ with periodic boundary condition by 
\begin{equation}\label{deflaplace}
[\phi; -\Delta^p_\Lambda\phi] = \sum\limits_{j, k \in \Lambda, jk\in E}(\phi_j-\phi_k)^2
\end{equation}
for all $\phi : \Lambda \to \mathbb{R}$ satisfying periodic boundary condition. However, $-\Delta^p_\Lambda$ is not invertible as 0 is an eigenvalue of $-\Delta^p_\Lambda$ with eigenvector $\phi \equiv \text{const}$ on $\Lambda$. Thus we consider the domain $\mathcal{D}_p = \left\{v \in \mathbb{R}^\Lambda |~ v \text{ compact support and } \right [v; 1] = 0\}$ for the test function $v$ and let $G^p_{\Lambda} = (-\Delta^p_\Lambda)^{-1}$ be the inverse of the lattice Laplacian on the domain $\mathcal{D}_p$.
\begin{thm}\label{mymain1}
	Assume $\alpha \in (0, \frac{1}{2})$. If $v \in \mathcal{D}_p$, then 
	\begin{equation}
	\left<(\phi \cdot v)^{2p}\right>_{\alpha, \Lambda,\epsilon} \le \tilde{C}(\alpha,p)[v;G_{\Lambda}^pv]^{n}
	\end{equation}
\end{thm}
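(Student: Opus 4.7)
The plan is to exploit the key feature of the extended measure $\hat{\mu}^p_{\Lambda,\epsilon}$: for each fixed realization of the edge field $t$, the action $A^p(\phi,t)$ in \eqref{Gactionperiodic} is \emph{quadratic} in $\phi$, so the conditional law of $\phi$ given $t$ is Gaussian with mean zero and covariance $\tfrac{1}{2}G_{\Lambda,\epsilon}(t)$ (the factor $\tfrac{1}{2}$ absorbing that $e^{-A^p}$ has no explicit $\tfrac{1}{2}$ in the quadratic form). I would first write
\begin{equation*}
\bigl\langle (\phi\cdot v)^{2p}\bigr\rangle_{\alpha,\Lambda,\epsilon}
=(2p-1)!!\,2^{-p}\,\bigl\langle [v;G_{\Lambda,\epsilon}(t)v]^{p}\bigr\rangle_{\alpha,\Lambda,\epsilon},
\end{equation*}
reducing the problem to controlling the $p$-th moment of the (random) quadratic form $[v;G_{\Lambda,\epsilon}(t)v]$ under the $t$-marginal \eqref{modelgeneral}, uniformly in $\Lambda$ and $\epsilon$.

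The next step is a deterministic pointwise bound on $[v;G_{\Lambda,\epsilon}(t)v]$ in terms of the lattice Green's function $G^p_\Lambda$. Since $\epsilon I$ only shifts eigenvalues of $D_{\Lambda,0}(t)$ upward and $v\in\mathcal{D}_p$ is orthogonal to constants, one has $[v;G_{\Lambda,\epsilon}(t)v]\le [v;G_{\Lambda,0}(t)^{+}v]$ where $G_{\Lambda,0}(t)^{+}$ is the pseudo-inverse restricted to $\mathcal{D}_p$. I would then invoke Thomson's principle for the conductance network with weights $c_{jk}=\beta e^{t_{jk}}$:
\begin{equation*}
[v;G_{\Lambda,0}(t)^{+}v]=\min_{J:\,\mathrm{div}\,J=v}\sum_{jk\in E(\mathbb{T}_N)}\frac{J_{jk}^{2}}{\beta e^{t_{jk}}}
\le\frac{1}{\beta}\sum_{jk}(J^{*}_{jk})^{2}e^{-t_{jk}},
\end{equation*}
where $J^{*}$ is the harmonic unit-conductance flow solving $\mathrm{div}\,J^{*}=v$, which satisfies $\sum_{jk}(J^{*}_{jk})^{2}=[v;G^p_{\Lambda}v]$.

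To raise to the $p$-th power and take expectation, I would normalize by setting $w_{jk}=(J^{*}_{jk})^{2}/[v;G^p_{\Lambda}v]$ so that $\sum w_{jk}=1$, then apply Jensen's inequality to the convex function $x\mapsto x^{p}$:
\begin{equation*}
\Bigl(\sum_{jk}(J^{*}_{jk})^{2}e^{-t_{jk}}\Bigr)^{p}=[v;G^p_\Lambda v]^{p}\Bigl(\sum_{jk}w_{jk}e^{-t_{jk}}\Bigr)^{p}\le [v;G^p_\Lambda v]^{p-1}\sum_{jk}(J^{*}_{jk})^{2}e^{-p\,t_{jk}}.
\end{equation*}
Taking $\langle\,\cdot\,\rangle_{\alpha,\Lambda,\epsilon}$ and invoking Proposition \ref{propbda} with $\lambda=-p$ yields $\langle e^{-p\,t_{jk}}\rangle\le C(p,\alpha)$ for every edge, and the sum collapses back to $[v;G^p_\Lambda v]$, producing the desired bound with $\tilde{C}(\alpha,p)=(2p-1)!!\,2^{-p}\beta^{-p}C(p,\alpha)$. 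The main subtlety here is step (3)--(4): correctly handling the zero mode of the Laplacian on the torus, so that the operator comparison with the $\epsilon=0$ pseudo-inverse and the use of Thomson's principle on $\mathcal{D}_p$ are justified; once this is in place, the rest is Jensen plus the uniform moment bound of Proposition \ref{propbda}, which is where the true analytic work (handled previously) is concentrated.
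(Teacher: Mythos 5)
Your proposal is correct and follows essentially the same route as the paper: conditional Gaussianity reduces the claim to moments of $[v;G_{\Lambda,\epsilon}(t)v]$, a deterministic bound replaces this quadratic form by $\sum_{jk}\bigl((G^p_\Lambda v)_j-(G^p_\Lambda v)_k\bigr)^2e^{-t_{jk}}$ (your Thomson-principle flow $J^*=\nabla G^p_\Lambda v$ yields exactly the inequality the paper imports as Lemma \ref{lemmabd} from Brydges--Spencer), and the $p$-th power is then controlled by Jensen/H\"older together with the uniform exponential moment bound of Proposition \ref{propbda} at $\lambda=-p$. Your explicit handling of the torus zero mode and the monotone comparison with the $\epsilon=0$ pseudo-inverse is a self-contained substitute for the citation, but the underlying estimate and the overall structure of the argument are identical to the paper's.
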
	

The bounds for the finite order moments provide tightness and furthermore give the existence of infinite volume Gibbs measure defined as follows.

\begin{definition}[DLR-equation]\label{defDLR}
	Let $\mathscr{F}_{\Lambda} = \sigma\left({\phi_j, j \in \Lambda}\right)$. We say a measure $\mu$ on $(\Omega, \mathscr{F}_{\mathbb{Z}^d})$ is an infinite volume Gibbs measure for parameter $\beta$ if for any finite set $\Lambda \subset \mathbb{Z}^d$ and $\mu-a.s.~\psi$, the following DLR-equation holds:
	\begin{equation}\label{DLR}
	\mu(~\cdot~| \mathscr{F}_{\Lambda^c})(\psi) = \mu^{\psi}_{\Lambda, \beta}(\cdot),
	\end{equation}
	where $\mu^{\psi}_{\Lambda, \beta}(\cdot)$ is defined in (\ref{modela}). Let $\mathscr{G}$ denote the set of all infinite volume Gibbs measures.
\end{definition}

\begin{thm}[Existence of infinite volume Gibbs measure]\label{existGibbs}
	Assume $\alpha \in (0, \frac{1}{2})$. For $d \ge 3$, there exists a translation invariant, ergodic Gibbs measure $\mu$ on $(\Omega, \mathscr{F}_{\mathbb{Z}^d})$.
\end{thm}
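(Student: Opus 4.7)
The plan is to construct $\mu$ as a weak subsequential limit of the finite-volume periodic measures $\mu^p_{\Lambda_n,\epsilon_n}$ along a sequence with $\Lambda_n\nearrow\mathbb{Z}^d$ (cuboids of side length $\to\infty$) and $\epsilon_n\searrow 0$, then to verify the DLR property and translation invariance, and finally extract an ergodic component.

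First I would establish tightness of the finite-dimensional marginals. For differences, Theorem~\ref{mymain1} applied to $v=\delta_j-\delta_k\in\mathcal{D}_p$ yields
$$
\langle(\phi_j-\phi_k)^{2p}\rangle_{\alpha,\Lambda_n,\epsilon_n}\le \tilde{C}(\alpha,p)\,[v;G^p_{\Lambda_n}v]^{n},
$$
and in $d\ge 3$ the quantity $[v;G^p_{\Lambda_n}v]$ is bounded uniformly in $n$, converging to the whole-lattice analogue $2G_{\mathbb{Z}^d}(0,0)-2G_{\mathbb{Z}^d}(0,j-k)<\infty$. Hence the joint law of any finite family of differences $(\phi_{j_i}-\phi_o)$ is tight in $n$. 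The more delicate step is to control $\phi_0$ itself, since $\delta_0\notin\mathcal{D}_p$. Here I would exploit the conditional Gaussian structure of the extended measure $\hat\mu^p_{\Lambda_n,\epsilon_n}$: conditionally on the auxiliary field $t$, $\phi$ is centered Gaussian with covariance $G_{\Lambda_n,\epsilon_n}(t)$, so
$$
\langle\phi_0^2\rangle_{\alpha,\Lambda_n,\epsilon_n}=\big\langle G_{\Lambda_n,\epsilon_n}(t)(0,0)\big\rangle_t.
$$
Combining the exponential moment bound of Proposition~\ref{propbda} with the random conductance heat kernel and Green's function estimates of \cite{ADS14,ADS15H} should give a uniform bound on the right side; this is precisely where $d\ge 3$ enters, since transience of simple random walk makes the diagonal Green's function finite in the limit $\epsilon\searrow 0$.

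Once tightness is in hand, I would extract a subsequence along which all finite-dimensional marginals of $\mu^p_{\Lambda_n,\epsilon_n}$ converge to those of a probability measure $\mu^*$ on $(\Omega,\mathscr{F}_{\mathbb{Z}^d})$. The DLR property is then inherited from the finite-volume specification: for any finite $\Lambda'\subset\mathbb{Z}^d$ eventually contained in $\Lambda_n$, the conditional distribution of $\phi|_{\Lambda'}$ given $\phi|_{\Lambda_n\setminus\Lambda'}$ under $\mu^p_{\Lambda_n,\epsilon_n}$ equals $\mu^{\psi}_{\Lambda',\epsilon_n}$, and uniform integrability from the moment bounds allows passage to the $\epsilon=0$ DLR equation of Definition~\ref{defDLR}. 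Translation invariance is immediate, because each $\mu^p_{\Lambda_n,\epsilon_n}$ is invariant under torus shifts, which converge to $\mathbb{Z}^d$ shifts in the limit. For ergodicity, apply the ergodic decomposition theorem to write $\mu^*=\int \mu^{*,\omega}\,dP(\omega)$; each component $\mu^{*,\omega}$ is again a translation-invariant Gibbs measure (the Gibbs property is preserved by the ergodic decomposition of translation-invariant measures, via the standard Georgii argument), and I take $\mu$ to be any such component.

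The main obstacle I anticipate is the single-site tightness step, namely bounding $\langle\phi_0^2\rangle_{\alpha,\Lambda_n,\epsilon_n}$ uniformly as $\epsilon\to 0$: Theorem~\ref{mymain1} gives no information for $v=\delta_0\notin\mathcal{D}_p$, so external random walk and heat kernel estimates for random conductance Laplacians must be invoked, and transience in $d\ge 3$ is used in an essential way.
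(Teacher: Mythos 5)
Your overall architecture (limit of periodic massive finite-volume measures, tightness via second moments, passage of the DLR equation to the limit, ergodicity via extreme/ergodic decomposition) matches the paper's. But there is a genuine gap at exactly the step you flag as the main obstacle, and it has two parts. First, the order of limits matters. Under the periodic massive measure $\mu^p_{\Lambda,\epsilon}$ the operator $D_{\Lambda,\epsilon}(t)$ has the constant vector as an eigenvector with eigenvalue $\epsilon|\Lambda|/|\Lambda|=\epsilon$, so $G_{\Lambda,\epsilon}(t)(0,0)\ge (|\Lambda|\epsilon)^{-1}$ and hence $\langle\phi_0^2\rangle_{\alpha,\Lambda_n,\epsilon_n}\ge(|\Lambda_n|\epsilon_n)^{-1}$. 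A bound uniform over an arbitrary diagonal sequence $(\Lambda_n,\epsilon_n)$ therefore does not exist; you would have to couple the rates so that $|\Lambda_n|\epsilon_n\to\infty$. The paper sidesteps this entirely by taking the limits in two stages: for each fixed $\epsilon=1/n$ it first extracts an infinite-volume massive Gibbs measure $\mu_n\in\mathscr{G}^p_{1/n}$ (citing Georgii for existence and translation invariance), and only then sends $n\to\infty$; the zero-mode contribution vanishes in the first limit.

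Second, the mechanism you propose for the single-site bound is not the one that works, and is left at the level of ``should give.'' The quenched heat-kernel estimates of the random conductance literature are almost-sure statements for large times in a fixed infinite ergodic environment; converting them into an annealed bound on $\langle G(t)(0,0)\rangle$ uniform in $n$ would require uniform integrability input you do not supply. The paper instead uses the elementary deterministic comparison already available as Lemma \ref{lemmabd}: for $d\ge3$ and $v=\delta_0\in\mathcal{D}_0$ (no mean-zero condition is needed in the infinite-volume version),
\begin{equation*}
[v,G_{d,\epsilon}(t)v]\;\le\;\sum_{jk\in E}\bigl((G_dv)_j-(G_dv)_k\bigr)^2 e^{-t_{jk}},
\end{equation*}
and then Proposition \ref{propbda} bounds $\langle e^{-t_{jk}}\rangle$ by a constant, giving $\langle\phi_0^2\rangle_n\le C\,G_d(0,0)<\infty$; this is where $d\ge3$ enters. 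Your instinct that transience is the essential ingredient is right, but the route through \cite{ADS14,ADS15H} is both heavier and unproven as stated, while the tool that closes the argument is the quadratic-form domination plus the exponential moment bound on the auxiliary field. The remaining steps of your proposal (DLR passage using locality and boundedness of test functions, translation invariance, ergodic component) agree with the paper.
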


\subsection{Scaling limit}
\hspace*{\parindent}In the rest of this section, we will define of Gaussian free field and state our result for the scaling limit. We will follow the setting and notation of section 2.2 of \cite{BS11}. 

Firstly we need the definition of the function space we will work on. Let $\Delta_\mathbb{R}$ denote the Laplace differential operator in $\mathbb{R}^d$ and consider the set
\begin{equation}
\mathcal{H}_0 := \{\Delta_\mathbb{R} g : g \in C^\infty_0(\mathbb{R}^d) \}.
\end{equation}
Then we define the norm on $\mathcal{H}_0$
\begin{equation}
\norm{f}_\mathcal{H} := [(f, f) + (f, -\Delta_\mathbb{R}^{-1}f)]^{1/2}
\end{equation}
where $(\cdot, \cdot)$ is the usual inner product in $L^2$-space. Let $\mathcal{H}$ be the completion of  $\mathcal{H}_0$ in this norm.

With the space $\mathcal{H}$, we will give the following  definition of a continuum Gaussian free field via Gaussian Hilbert spaces. See \cite{SS07} for more details.

\begin{definition}
	We say that a family $\{\psi(f) : f \in \mathcal{H}\}$ of random variables on a probability space $(\Omega, \mathcal{F},\mathbb{P})$ is a Gaussian free field if the map $f \to \psi(f)$ is linear a.s. and each $\psi(f)$ is Gaussian with mean zero and variance
	\begin{equation}
	E(\psi(f)^2) = (f,-\Delta_\mathbb{R}^{-1}f).
	\end{equation}
	Furthermore, if $\Sigma = (q_{ij})$ is a positive semidefinite, non-degenerate matrix and $\Sigma = LL^T$ is its Cholesky decomposition, then $\tilde{\psi} = L\psi$ is called a Gaussian field with mean zero and covariance $Q^{-1}$ where $Q^{-1}$ is the inverse of the operator
	\begin{equation}
	Qf: = \sum^d_{i, j = 1}q_{ij}\frac{\partial^2}{\partial_i\partial_j}f.
	\end{equation}
\end{definition}

To state the result for the scaling limit, we also need the definition of linear functional on $\mathcal{H}$ induced by the gradient field $\phi$. Let $C^\infty_0(\mathbb{R}^d)$ denote the set of all infinitely differentiable functions $f :\mathbb{R}^d \mapsto \mathbb{R}$ with compact support. For any function $f \in C^\infty_0(\mathbb{R}^d)$ and the random field $\phi$ defined on the lattice $\mathbb{Z}^d$, we introduce the random linear functional $f \mapsto \phi(f)$ given by
\begin{equation}\label{defphioperator}
\phi(f) := \int dx ~f(x)\phi_{\lfloor x\rfloor},
\end{equation}
where $f$ satisfies
\begin{equation}\label{orthon}
\int dx ~f(x) = 0.
\end{equation}
Notice $f$ satisfies (\ref{orthon}) if  $f \in \mathcal{H}_0$. We will prove the following lemma which provides regularity estimates so that $\phi$ can be extended to a linear functional on $\mathcal{H}$. 
\begin{lemma}\label{reglemma}
	let $d \ge 3$ and $\mu$ be a translation-invariant, ergodic gradient Gibbs measure defined in (\ref{DLR}). There then exists a constant $c \in \infty$ such that for each $f \in \mathcal{H}_0$,
	\begin{equation}\label{reginequal}
	\norm{\phi(f)}_{L^2(\mu)} \le c\norm{f}_\mathcal{H}.
	\end{equation}
	In particular, $\phi$ is $L^2$ continuous and thus extends to a random linear functional $\phi : \mathcal{H} \to \mathbb{R}$.
\end{lemma}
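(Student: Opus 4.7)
The plan is to express $\phi(f)$ as a lattice pairing $\phi \cdot v$, apply Theorem \ref{mymain1} to bound its second moment, pass to the infinite-volume measure, and then compare the resulting discrete Dirichlet form with $\|f\|_\mathcal{H}^2$ via Fourier analysis.

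Setting $v_x := \int_{x + [0,1)^d} f(y)\,dy$, we have $\phi(f) = \sum_x v_x \phi_x$. Since $f = \Delta_\mathbb{R} g$ for some $g \in C_0^\infty(\mathbb{R}^d)$, integration by parts yields $\int f = 0$, so $\sum_x v_x = 0$ and $v \in \mathcal{D}_p$ in any sufficiently large periodic box $\Lambda$. Applying Theorem \ref{mymain1} with $p = 1$ on such a $\Lambda$ at mass $\epsilon > 0$ gives, uniformly in $\Lambda$ and $\epsilon$,
\begin{equation*}
\left\langle (\phi \cdot v)^2 \right\rangle_{\alpha, \Lambda, \epsilon} \leq \tilde C(\alpha, 1)\,[v; G^p_\Lambda v].
\end{equation*}
Passing to the thermodynamic limit (as in the construction of Theorem \ref{existGibbs}) and invoking Fatou's lemma together with the convergence of the periodic Green's functions $G^p_\Lambda \to G_{\mathbb{Z}^d} := (-\Delta_{\mathbb{Z}^d})^{-1}$ on finitely supported mean-zero sequences (well-defined in $d \ge 3$ by transience of simple random walk) yields $\mathbb{E}_\mu[(\phi \cdot v)^2] \leq \tilde C(\alpha, 1)\,[v; G_{\mathbb{Z}^d} v]$.

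The core step is the estimate $[v; G_{\mathbb{Z}^d} v] \leq C \|f\|_\mathcal{H}^2$. By discrete Plancherel,
\begin{equation*}
[v; G_{\mathbb{Z}^d} v] = (2\pi)^{-d}\int_{[-\pi,\pi]^d} \frac{|\hat v(\xi)|^2}{2d - 2\sum_i \cos\xi_i}\, d\xi \leq C \int_{[-\pi,\pi]^d}\frac{|\hat v(\xi)|^2}{|\xi|^2}\, d\xi,
\end{equation*}
using $2d - 2\sum_i\cos\xi_i \asymp |\xi|^2$ on $[-\pi,\pi]^d$. Since $v$ is the lattice sampling of $f * \mathbf{1}_{-C_0}$, Poisson summation yields $\hat v(\xi) = \sum_{n \in \mathbb{Z}^d} c_n(\xi)\,\hat f(\xi - 2\pi n)$, where the $c_n(\xi) = \prod_i (e^{i\xi_i} - 1)/(i(\xi_i - 2\pi n_i))$ are the Fourier coefficients of the unit-periodic function $y \mapsto e^{i\xi \cdot (y - \lfloor y \rfloor)}$; Parseval's identity then gives $\sum_n |c_n(\xi)|^2 = 1$. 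Splitting $\hat v$ into the $n = 0$ and $n \neq 0$ parts and applying Cauchy--Schwarz to the latter yields
\begin{equation*}
|\hat v(\xi)|^2 \leq 2|c_0(\xi)|^2 |\hat f(\xi)|^2 + 2(1 - |c_0(\xi)|^2)\sum_{n \neq 0} |\hat f(\xi - 2\pi n)|^2.
\end{equation*}
The first summand, integrated against $1/|\xi|^2$, is bounded by $C(f, -\Delta_\mathbb{R}^{-1} f)$ (using $|c_0| \le 1$). The second integrates to $C\|f\|_{L^2(\mathbb{R}^d)}^2$ via Plancherel and a change of variables, since $|c_0(\xi)|^2 = 1 - O(|\xi|^2)$ near the origin makes $(1 - |c_0(\xi)|^2)/|\xi|^2$ uniformly bounded on $[-\pi,\pi]^d$. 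Combining gives $[v; G_{\mathbb{Z}^d} v] \leq C\|f\|_\mathcal{H}^2$.

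The main technical obstacle is this final Fourier comparison: taming the $1/|\xi|^2$ singularity of the discrete Green's function in the presence of Poisson-summation aliasing. The Parseval identity $\sum_n |c_n(\xi)|^2 = 1$ is the key mechanism, cleanly separating the singular $n = 0$ contribution (which matches the continuum $(f, -\Delta^{-1} f)$ exactly) from the non-singular aliasing remainder, the latter being absorbed into the extra $(f, f)$ term present in the $\mathcal{H}$-norm but absent in the pure Gaussian free field.
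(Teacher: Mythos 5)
Your proof is correct and follows essentially the same route as the paper's: the same lattice averaging $v_f(x)=\int_{x+[0,1)^d}f(y)\,dy$, the same reduction of $\mathbb{E}_\mu[(\phi\cdot v_f)^2]$ to $[v_f;(-\Delta_d)^{-1}v_f]$ via Theorem \ref{mymain1}, and the same final comparison $[v_f;(-\Delta_d)^{-1}v_f]\le C\norm{f}^2_\mathcal{H}$. The only difference is that the paper simply cites this last estimate from (4.4) of \cite{BS11}, whereas you supply a correct proof of it via Poisson summation and the Parseval identity $\sum_n|c_n(\xi)|^2=1$, and you are somewhat more explicit about passing the finite-volume moment bound to the infinite-volume measure.
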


For $\delta > 0$ and a function $f: \mathbb{R}^d \to \mathbb{R}$, let
\begin{equation}
f_\delta(x) := \delta^{d/2+1}f(\delta x).
\end{equation}
Note that if $f \in \mathcal{H}$ and $\delta < 1$, then
\begin{eqnarray*}
	\norm{f_\delta(x)}_\mathcal{H}^2& =& (f_\delta(x), f_\delta(x)) + (f_\delta(x), -\Delta_\mathbb{R}^{-1}f_\delta(x))\\
	&=&\delta^2(f, f) + (f, -\Delta_\mathbb{R}^{-1}f) \le \norm{f}_\mathcal{H}^2.
\end{eqnarray*}
Let $\phi_\delta$ be the linear functional on $C^\infty_0(\mathbb{R}^d)$ by
\begin{equation}
\phi_\delta(f) := \phi(f_\delta(x)).
\end{equation}
Then by Lemma \ref{reglemma}, $ \norm{\phi_\delta(f)}_{L^2(\mu)} = \norm{\phi(f_\delta)}_{L^2(\mu)} \le c\norm{f_\delta}_\mathcal{H} \le c\norm{f}_\mathcal{H}$, and thus $\phi_\delta$ can be extended to $\mathcal{H}$. Our main result is as follows.

\begin{thm}\label{main}
	Let $d \ge 3, \alpha < 1/2$, and $\mu$ be a translation-invariant, ergodic gradient Gibbs measure defined in (\ref{DLR}). Then for every $f \in \mathcal{H}$.
	\begin{equation}
	\lim_{\delta \to 0}\mathbb{E}_\mu(e^{i\phi_\delta(f)}) = \exp\left\{\frac{1}{2}\int dxf(x)(Q^{-1}f)(x)\right\},
	\end{equation}
	where $Q^{-1}$ is the inverse of the operator
	\begin{equation}\label{colmatrix}
	Qf: = \sum^d_{i, j = 1}q_{ij}\frac{\partial^2}{\partial_i\partial_j}f,
	\end{equation}
	with $(q_{ij})$ denoting some positive semidefinite, non-degenerate, $d\times d$ matrix depending only on $\alpha$ and $d$. In other words, the law of $\phi_\epsilon$ on the linear dual $\mathcal{E}'$ of any finite-dimensional linear subspace $\mathcal{E} \subset \mathcal{H}$ converges weakly to that of a Gaussian field with mean zero and covariance $(-Q)^{-1}$.
\end{thm}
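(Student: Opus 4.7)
The plan is to represent $\mu$ as a mixture of Gaussian gradient measures indexed by a random conductance field, and then invoke the quenched invariance principle and heat kernel estimates for the random conductance model to identify the scaling limit, following the strategy outlined in the introduction.

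First I would extend $\mu$ to a joint measure $\hat\mu$ on the product space of fields $\phi$ and edge conductances $\omega = (\omega_{jk})_{jk \in E}$ by mimicking the finite-volume construction of $\hat\mu^\psi_{\Lambda,\epsilon}$ via the auxiliary field $t$ and the identification $\omega_{jk} = e^{t_{jk}}$; the existence of this infinite-volume extension follows from tightness, using the moment bounds in Proposition \ref{propbda} together with their analogues for $\omega_{jk}^{-1}$. Under $\hat\mu$, the conditional law of $\phi$ given $\omega$ is a Gaussian gradient field whose covariance is proportional to the Green's function $G^\omega$ of $-\mathcal{L}^\omega_X$. Because every $f \in \mathcal{H}_0$ satisfies $\int f\,dx = 0$, the conditional mean of $\phi_\delta(f)$ vanishes, and hence
\begin{equation*}
\mathbb{E}_\mu\!\left[e^{i\phi_\delta(f)}\right] = \mathbb{E}_{\hat\mu}\!\left[\exp\left(-\tfrac{1}{2}\sigma^2_\delta(\omega)\right)\right], \quad \sigma^2_\delta(\omega) := c\iint f_\delta(x)\,G^\omega(\lfloor x\rfloor,\lfloor y\rfloor)\,f_\delta(y)\,dx\,dy,
\end{equation*}
for an absolute constant $c>0$. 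Translation invariance and ergodicity of $\mu$, combined with the measurable and local nature of the conditional law of $\omega$ given $\phi$, transfer to translation invariance and ergodicity of the $\omega$-marginal of $\hat\mu$.

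Next I would use the heat kernel representation $G^\omega(x,y) = \int_0^\infty p^\omega_s(x,y)\,ds$ and perform the diffusive change of variables $u = \delta x$, $v = \delta y$, $\tau = \delta^2 s$ to rewrite $\sigma^2_\delta(\omega)$ as a weighted integral of rescaled transition densities against $f(u)f(v)$. By the quenched functional central limit theorem of Andres--Deuschel--Slowik \cite{ADS14} for random walks in ergodic random conductance environments, the rescaled walks $\delta X_{\cdot/\delta^2}$ converge under the quenched law to a Brownian motion with a deterministic, positive-definite covariance $2(q_{ij})$; by ergodicity this matrix depends only on the law of $\omega$, and hence only on $\alpha$ and $d$. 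Combined with the Gaussian upper bounds on $p^\omega_s$ from \cite{ADS15H}, which supply a dominating integrand, this yields pointwise convergence $\sigma^2_\delta(\omega) \to c'(f,(-Q)^{-1}f)$ for $\hat\mu$-almost every $\omega$. Bounded convergence of the complex exponential then delivers the claimed Gaussian characteristic function, with non-degeneracy of $Q$ arising from the almost sure positivity of $\omega_{jk}$.

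The principal obstacle is passing from the path-level quenched CLT in \cite{ADS14} to convergence of the full time-integrated Green's function inside $\sigma^2_\delta$. Short-time contributions from $x$ close to $y$, on which the CLT carries no information, must be controlled by the local regularity of $f$ together with the on-diagonal heat kernel estimates of \cite{ADS15H}, while the long-time tail requires their off-diagonal Gaussian upper bound; both inputs are valid provided that $\omega_{jk}$ and $\omega_{jk}^{-1}$ possess sufficiently many moments under $\hat\mu$. A secondary task is the infinite-volume construction of $\hat\mu$ itself and the verification that its $\omega$-marginal is translation invariant, ergodic, and satisfies the required moment conditions; ergodicity and translation invariance descend from the corresponding properties of $\mu$, while the moment conditions reduce to Proposition \ref{propbda} and its negative-moment counterpart.
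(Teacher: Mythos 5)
Your overall route is the one the paper takes: extend $\mu$ to a joint law on $(\phi,\omega)$, observe that the conditional law of $\phi$ given $\omega$ is Gaussian with covariance $(-\mathcal{L}^\omega_X)^{-1}$, write the Green's function as $\int_0^\infty e^{t\mathcal{L}^\omega_X}\,dt$, rescale diffusively, and combine the quenched invariance principle of \cite{ADS14} with the heat kernel bounds of \cite{ADS15H} and bounded convergence. Your handling of the variance term, including the truncation of the time integral via the on-diagonal bound, matches Lemmas \ref{lemmatailbd}--\ref{varconv} and Corollary \ref{trunc}.

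There is, however, a genuine gap in your treatment of the conditional mean. You assert that the conditional mean of $\phi_\delta(f)$ vanishes ``because every $f\in\mathcal{H}_0$ satisfies $\int f\,dx=0$.'' That cancellation only works if $\mathbb{E}_{\tilde\mu}(\phi_x\mid\mathscr{E})(\omega)$ is \emph{constant in $x$}; a priori it is some $\omega$-dependent function $m(\omega,x)$, and $\sum_x f(x)\,m(\omega,x)$ need not vanish just because $\sum_x f(x)=0$. If it did not vanish, the conditional characteristic function would carry an extra oscillatory factor $e^{i\sum_x f_\delta(x)m(\omega,x)}$ that you would still have to control as $\delta\to0$. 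Establishing constancy of the conditional mean is precisely the nontrivial content of Proposition \ref{Propconditionexp}: one shows that $u(\omega,x)=\mathbb{E}_{\tilde\mu}(\phi_x-\phi_0\mid\mathscr{E})(\omega)$ is $\mathcal{L}^\omega_X$-harmonic, shift-covariant, square integrable (both componentwise and as a vector field, using Theorem \ref{mymain1} and Corollary \ref{pbound}), and has zero mean, and then invokes the potential-theoretic Lemma \ref{lemmazerofunction} (no nontrivial harmonic, shift-covariant, zero-mean, square-integrable vector fields in an ergodic environment) to conclude $u\equiv0$. Note also that in the non-uniformly-elliptic setting here this lemma needs the extra weighted square-integrability hypothesis, which is where the negative moments of $\omega$ enter. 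Your proposal as written skips this entire step, so you should either supply it or cite an equivalent corrector-uniqueness argument.
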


\section{Moment Bounds}
\subsection{Bounds for the auxiliary field}
\hspace*{\parindent}We will start the proof of Theorem \ref{mymain1}, which gives the bounds on  $\left<e^{\lambda t_{jk}}\right>_{\alpha, \Lambda,\epsilon}$ for all $\lambda \in \mathbb{R}$. We will follow the strategy of the proof of Proposition 3 in \cite{BS12}. As all these bounds are uniform in $\epsilon$ and $\Lambda$, to shorten the notation, we write $\left<\cdot\right>_{\alpha, \Lambda,\epsilon}$ as $\left<\cdot\right>$. By scaling $\phi \to \sqrt{\beta}\phi$ and $\epsilon \to \epsilon/\beta^2$, we take $\beta = 1$ in the rest of the section. Recall that the symmetric finite difference operator $D_{\Lambda, \epsilon}$ is defined in (\ref{defD}). The following lemma about $D_{\Lambda, \epsilon}$ from \cite{BS12} is useful in the following proof.

\begin{lemma}[{{\cite[Lemma 1]{BS12}}}] \label{lemmalogcon}
	For all $t$, $\ln \det [D_{\Lambda,\epsilon}(t)]$ is a convex function of $t$. 
\end{lemma}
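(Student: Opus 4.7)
The plan is to expand $\det D_{\Lambda,\epsilon}(t)$ as a non-negative linear combination of exponentials of linear functions of $t$, and then invoke the fact that any such combination is log-convex.

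First, I would rewrite (\ref{defD}) in matrix form. Letting $V$ denote the signed edge--vertex incidence matrix of $\mathbb{T}_N$ (the row for edge $e=jk$ being $e_j-e_k$) and $W(t)=\mathrm{diag}(\beta e^{t_e})_e$, one has
\begin{equation*}
D_{\Lambda,\epsilon}(t)\;=\;\epsilon I\;+\;V^T W(t)\,V.
\end{equation*}
Expanding the characteristic polynomial via $\det(\epsilon I+B)=\sum_{k=0}^{n}\epsilon^{\,n-k}e_k(B)$, where $e_k(B)$ is the sum of all $k\times k$ principal minors of $B$, and applying the Cauchy--Binet formula to each principal minor of $V^T W V$, I would obtain
\begin{equation*}
\det D_{\Lambda,\epsilon}(t)\;=\;\sum_{T\subseteq E(\mathbb{T}_N),\,|T|\le n} c_T\,\epsilon^{\,n-|T|}\exp\!\Bigl(\sum_{e\in T}t_e\Bigr),\qquad c_T\;=\;\beta^{|T|}\!\!\sum_{S\subseteq\Lambda,\,|S|=|T|}\!\bigl(\det V_{T,S}\bigr)^{2}\;\ge\;0,
\end{equation*}
where $V_{T,S}$ is the submatrix of $V$ with rows indexed by $T$ and columns by $S$. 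The non-negativity of $c_T$ is automatic since each coefficient is a sum of squared minors; this is essentially the weighted matrix-tree-theorem expansion.

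Second, each summand is of the form $C_T\,e^{\ell_T(t)}$ with $C_T\ge 0$ and $\ell_T(t)=\sum_{e\in T}t_e$ linear, hence log-affine and in particular log-convex. The standard fact that a non-negative linear combination of log-convex functions is itself log-convex then closes the proof: it follows from H\"older's inequality, since the pointwise bound $f_T(\lambda x+(1-\lambda)y)\le f_T(x)^\lambda f_T(y)^{1-\lambda}$ combined with $\sum_T a_T^\lambda b_T^{1-\lambda}\le(\sum_T a_T)^\lambda(\sum_T b_T)^{1-\lambda}$ gives the log-convexity of $\sum_T f_T$. Since $D_{\Lambda,\epsilon}(t)$ is positive definite under our hypotheses, $\det D_{\Lambda,\epsilon}(t)>0$ and $\ln\det D_{\Lambda,\epsilon}(t)$ is therefore convex in $t$.

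I do not anticipate a serious obstacle. The only bookkeeping step that requires care is arranging the Cauchy--Binet expansion so that the coefficients manifestly appear as sums of squared minors; once non-negativity is in place, the log-convex-sum step is a one-line application of H\"older. A direct alternative would be to compute the Hessian of $\ln\det D$ and check its positive semi-definiteness via the matrix identity $\sum_e e^{t_e}M^{-1/2}(e_j-e_k)(e_j-e_k)^T M^{-1/2}=I-\epsilon M^{-1}$; this works but is considerably more delicate, whereas the Cauchy--Binet route handles the $\epsilon\to 0^+$ regime uniformly.
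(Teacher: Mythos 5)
Your proof is correct: the identity $\det(\epsilon I+B)=\sum_k \epsilon^{\,n-k}e_k(B)$ together with Cauchy--Binet does express $\det D_{\Lambda,\epsilon}(t)$ as a non-negative combination of exponentials of linear functions of $t$ (with $c_\emptyset=1$ guaranteeing positivity for $\epsilon>0$), and the H\"older argument for log-convexity of such sums is standard and correctly applied. The paper itself does not prove this lemma but imports it verbatim from \cite[Lemma 1]{BS12}, and your matrix-tree/Cauchy--Binet expansion is essentially the argument given in that cited source, so there is nothing to add.
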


We start our proof by the uniform bound for the second moment of the $t$ field. The tool we use is the Brascamp-Lieb bound. The version of the Brascamp-Lieb bound is as follows.

\begin{thm}[{\cite[Brascamp-Lieb bound]{BL76}}]\label{BLBthm}
	Let $F$ be a convex function on $\mathbb{R}^n$, and let $A$ be a real, positive definite, $n \times n$ matrix. Assume $\exp[-(\phi, A\phi)-F(\phi)]  \in L^1$ and define 
	\[\left<k \right>_F = \frac{\int k(\phi)\exp[-(\phi, A\phi)-F(\phi)] d\phi}{\int\exp[-(\phi, A\phi)-F(\phi)] d\phi}.\]
	If $F(\phi)\equiv 0$ we write $\left<\cdot\right>_0$. Let $v \in \mathbb{R}^n$, $\alpha \ge 1$. Then
	\begin{equation}
	\left<|v \cdot \phi-\left<v\cdot\phi\right>_F|^\alpha\right>_F \le \left<|v\cdot \phi|^\alpha\right>_0
	\end{equation}
	when F is log concave. Furthermore, let $f_{xx}$ be the Hessian matrix of $F$ and $M$ be the covariance matrix
	\begin{equation}
	M_{ij} = \left<\phi_i\phi_j\right>_F-\left<\phi_i\right>_F\left<\phi_j\right>_F.
	\end{equation}
	Then
	\begin{equation}\label{blbvar}
	M\le \left<(2A+f_{xx})^{-1}\right>_F\le(2A)^{-1}.
	\end{equation} 
	
\end{thm}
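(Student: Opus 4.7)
My strategy splits the Brascamp--Lieb bound into two parts: first prove the matrix covariance inequality (\ref{blbvar}), then derive the $L^\alpha$ concentration inequality by reducing to a one-dimensional comparison between a log-concave marginal and a Gaussian.

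For the matrix bound, set $W(\phi) := (\phi, A\phi) + F(\phi)$, so that $W$ is strongly convex with $W_{xx} = 2A + F_{xx} \ge 2A > 0$. I would establish the Poincar\'e-type inequality
\begin{equation*}
\mathrm{Var}_F(g) \le \left\langle (\nabla g)^T W_{xx}^{-1}(\nabla g)\right\rangle_F
\end{equation*}
for smooth $g$ via an integration-by-parts argument: for each smooth centered $g$, solve the elliptic equation $-\nabla\cdot(e^{-W}\nabla u) = g\,e^{-W}$ on $\mathbb{R}^n$, and write $\mathrm{Var}_F(g) = \langle \nabla u\cdot \nabla g\rangle_F$. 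The strong convexity $W_{xx}\ge 2A$ then controls $\nabla u$ in terms of $W_{xx}^{-1}\nabla g$, essentially by a matricial version of the Bakry--Emery criterion for the Witten Laplacian $L = -\Delta + \nabla W\cdot \nabla$. Applying this with $g = v\cdot \phi - \langle v\cdot\phi\rangle_F$ (so $\nabla g = v$) yields $v^T M v \le v^T \langle W_{xx}^{-1}\rangle_F v$ for all $v\in\mathbb{R}^n$, which is the first inequality of (\ref{blbvar}). The second inequality $\langle W_{xx}^{-1}\rangle_F \le (2A)^{-1}$ follows immediately from $F_{xx}\ge 0$ in the operator sense.

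For the $L^\alpha$ concentration, I would reduce to dimension one by an affine change of variables aligning $v$ with a coordinate axis and diagonalizing $A$. By Pr\'ekopa--Leindler, the marginal density of $v\cdot\phi$ under $\langle\cdot\rangle_F$ is log-concave and takes the form $\rho(x) = Z^{-1}e^{-c x^2}h(x)$ with $h$ log-concave and $c>0$ determined by $A$. A direct one-dimensional comparison between the distribution function of $\rho$ and that of the pure Gaussian $Z_0^{-1}e^{-cx^2}$ then shows that $L^\alpha$ deviations of $\rho$ around its mean are dominated by those of the Gaussian, which equals $\langle |v\cdot\phi|^\alpha\rangle_0$ by the change of variables.

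The main obstacle is precisely this last one-dimensional comparison: proving that multiplying a centered Gaussian by a log-concave factor can only shrink $\alpha$-th absolute moments about the (possibly shifted) mean. This is the original technical step of Brascamp and Lieb, typically handled by analyzing how the monotone rearrangement of $\rho$ sits relative to the Gaussian around its median, together with convexity properties of the map $x\mapsto|x|^\alpha$ for $\alpha\ge 1$. Once this 1D comparison is in hand, both conclusions of the theorem follow: the matrix inequality from the Helffer--Sj\"ostrand/Witten Laplacian framework, and the concentration from the reduction to the Gaussian marginal case.
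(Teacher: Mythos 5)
The paper does not prove Theorem \ref{BLBthm} at all: it is imported verbatim from Brascamp and Lieb \cite{BL76} and used as a black box, so there is no in-paper argument to compare yours against. Judged on its own terms, your outline is a faithful reconstruction of the two standard routes. The variance part via the Witten Laplacian / Helffer--Sj\"ostrand representation is sound: with $W = (\phi,A\phi)+F$ one has $\mathrm{Var}_F(g) = \langle \nabla g, (L\otimes I + W_{xx})^{-1}\nabla g\rangle_F$, the operator inequality $L\otimes I + W_{xx} \ge W_{xx}$ gives the Poincar\'e-type bound, specializing to linear $g$ gives $M \le \langle W_{xx}^{-1}\rangle_F$, and $W_{xx}\ge 2A$ gives the second inequality of (\ref{blbvar}); note this requires $F\in C^2$, which the theorem implicitly assumes by referring to $f_{xx}$, and for general convex $F$ one would mollify. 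The reduction of the $L^\alpha$ bound to one dimension via the marginal theorem (the marginal of Gaussian-times-log-concave along $v$ is again Gaussian-times-log-concave, with the \emph{same} Gaussian factor as the marginal of the pure Gaussian) is also the correct structure and matches the right-hand side $\langle|v\cdot\phi|^\alpha\rangle_0$ after centering.

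The one genuine gap is the one you flag yourself: the one-dimensional lemma that multiplying a centered Gaussian density by a log-concave factor cannot increase $\alpha$-th absolute moments about the (shifted) mean. This is not a routine verification --- it is the actual content of the Brascamp--Lieb moment inequality (their Theorem 5.1 and the rearrangement analysis behind it), and your sketch of ``analyzing how the monotone rearrangement sits relative to the Gaussian around its median'' gestures at it without supplying the argument. As a standalone proof your write-up is therefore incomplete at exactly the hardest point; as a justification for citing a classical result, which is all the paper itself does, it is adequate and correctly locates where the real work lies.
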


\begin{lemma}
	There exists a constant $c(\alpha)$ such that
	\begin{equation}\label{tvarbound}
	\left<(t_{jk}-\left<t_{jk}\right>)^2\right> \le c(\alpha).
	\end{equation} 
	Furthermore, for all $\lambda \in \mathbb{R}$,
	\begin{equation}\label{firstbd}
	\left<e^{\lambda t_{jk}} \right> \le e^{c(\alpha)\lambda^2/2}e^{\lambda\left<t_{jk}\right>}.
	\end{equation} 
\end{lemma}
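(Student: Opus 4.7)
The plan is to apply the Brascamp-Lieb bound (Theorem~\ref{BLBthm}) to the $t$-marginal measure in (\ref{modelgeneral}); its negative log-density is
\begin{equation*}
F_{\mathrm{tot}}(t) \;=\; \tfrac{1}{2}\log\det D_{\Lambda,\epsilon}(t) \;+\; \sum_{jk \in E(\mathbb{T}_N)} g(t_{jk}) \;+\; \mathrm{const}, \qquad g(t) := e^{t} - t - \log f_\alpha(e^{t}).
\end{equation*}
By Lemma~\ref{lemmalogcon} the first summand is convex in $t$, so its Hessian is positive semidefinite and can be absorbed into the convex piece of a Brascamp-Lieb decomposition. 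The strategy is then to establish a pointwise lower bound $g''(t)\ge c_1(\alpha)>0$ uniformly in $t$, which yields a decomposition $F_{\mathrm{tot}}(t)=(t,At)+F_0(t)$ with $A=\tfrac{1}{2}c_1(\alpha)I$ positive definite and $F_0$ convex, at which point Theorem~\ref{BLBthm} applies.

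To establish $g''(t)\ge c_1(\alpha)$, I would use the classical asymptotics of the one-sided $\alpha$-stable density: $f_\alpha(\kappa)\sim C_+\kappa^{-1-\alpha}$ as $\kappa\to\infty$ and $f_\alpha(\kappa)\sim C_-\kappa^{-(2-\alpha)/(2(1-\alpha))}\exp\bigl(-b_\alpha\kappa^{-\alpha/(1-\alpha)}\bigr)$ as $\kappa\to 0^+$. Substituting $\kappa=e^{t}$, the $e^t$ in $g$ forces $g''(t)\to\infty$ as $t\to+\infty$, and the super-exponential small-$\kappa$ tail of $f_\alpha$ forces the second derivative of $-\log f_\alpha(e^t)$, and hence $g''$, to $+\infty$ as $t\to-\infty$. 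Continuity of $g''$ on the bounded transition region, plus a quantitative check that $e^t$ dominates any concavity of $-\log f_\alpha(e^t)$ there, then yields a uniform positive lower bound.

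Once the decomposition is in hand, Brascamp-Lieb (inequality~(\ref{blbvar})) applied with $v=e_{jk}$ gives $\mathrm{Var}(t_{jk})\le e_{jk}^{\top}(2A)^{-1}e_{jk}=1/c_1(\alpha)$, which is (\ref{tvarbound}) with $c(\alpha):=1/c_1(\alpha)$. For (\ref{firstbd}), set $h(\lambda):=\log\left<e^{\lambda t_{jk}}\right>$; then $h(0)=0$, $h'(0)=\left<t_{jk}\right>$, and $h''(\lambda)$ equals the variance of $t_{jk}$ under the tilted law proportional to $e^{\lambda t_{jk}}\hat\mu^{p}_{\Lambda,\epsilon}(dt)$. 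Since the tilt only adds a linear term to $F_0$, the quadratic part $A$ and convexity of the remainder are preserved, and Brascamp-Lieb gives $h''(\lambda)\le c(\alpha)$ uniformly in $\lambda$. Integrating twice from $0$ yields $h(\lambda)\le\lambda\left<t_{jk}\right>+c(\alpha)\lambda^2/2$, which is (\ref{firstbd}).

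The main obstacle is the uniform lower bound $g''(t)\ge c_1(\alpha)>0$ for every $\alpha\in(0,1/2)$. The asymptotics control $g''$ at $\pm\infty$, but in the intermediate region $f_\alpha$ is not explicit and one must rule out that $-\log f_\alpha(e^t)$ is concave enough to cancel the contribution of $e^t$. This is the genuinely new analytic input needed beyond the $\alpha=1/2$ computation of \cite{BS12}, where the Lévy form of $f_{1/2}$ makes the bound $g''(t)=e^t+e^{-t}/4\ge 1$ immediate.
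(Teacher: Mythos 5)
Your overall architecture is exactly the paper's: tilt by $e^{\lambda t_{jk}}$, split the negative log-density of the $t$-marginal into the log-determinant (convex by Lemma \ref{lemmalogcon}) plus a sum of single-edge terms $g(t)=e^{t}-t-\log f_\alpha(e^{t})$, extract a uniform quadratic lower bound on the Hessian, apply (\ref{blbvar}) to the tilted measure to get $q''(\lambda)\le c(\alpha)$, and integrate twice. The constants and the Taylor/integration step are fine.

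The problem is the step you yourself flag as the ``main obstacle'': the uniform bound $g''(t)=e^{t}-\tfrac{d^2}{dt^2}\log f_\alpha(e^{t})\ge c_1(\alpha)>0$. You leave this open, and the route you sketch for it does not close it: pointwise asymptotics of the density $f_\alpha(\kappa)$ at $0$ and $\infty$ do not, by themselves, control the \emph{second derivative} of $\log f_\alpha(e^t)$, so neither the claimed divergence of $-\tfrac{d^2}{dt^2}\log f_\alpha(e^t)$ as $t\to-\infty$ nor the ``quantitative check'' on the intermediate region follows from what you cite. The missing input is already in the paper's appendix and is precisely where the hypothesis $\alpha\le 1/2$ enters: Corollary \ref{flogconcave} (via Simon's theorem that a positive $\alpha$-stable law is multiplicative strongly unimodal if and only if $\alpha\le 1/2$) gives that $t\mapsto f_\alpha(e^{t})$ is log-concave, i.e.\ $-\tfrac{d^2}{dt^2}\log f_\alpha(e^{t})\ge 0$ for \emph{all} $t$. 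Hence there is no concavity to ``cancel'' $e^{t}$ anywhere, and $g''(t)>0$ pointwise; the only place the infimum could degenerate is $t\to-\infty$, where $e^{t}\to 0$, and there Proposition \ref{secondbound} (proved via the integral representation of Theorem \ref{thmrpint} and the Laplace-method Lemma \ref{laplacemethod}, not via density asymptotics) gives $\tfrac{d^2}{dt^2}\log f_\alpha(e^{t})\to-\infty$. A continuous, everywhere-positive function diverging at both ends has a positive infimum, which is your $c_1(\alpha)$. With that supplied, the rest of your argument goes through as written.
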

\begin{proof}
	For $\lambda \in \mathbb{R}$, let $F_\lambda = e^{\lambda t_{jk}}$, $q(\lambda) = \ln\left<F_\lambda\right>$. We bound $q(\lambda)$ using Taylor's theorem to second order in $\lambda$. To do this consider a $\lambda$ dependent measure $\left< \cdot \right>_\lambda: = \left<\cdot F_\lambda\right>/\left<F_\lambda\right>$. Then
	\begin{equation}\label{qdev}
	q(0) = 0, q'(0) = \left<t_{jk}\right>, q''(\lambda)= \left<(t_{jk}-\left<t_{jk}\right>_\lambda)^2\right>_\lambda.
	\end{equation}
	
	By the definition of $\left<\cdot\right>_\lambda$ and (\ref{modelgeneral}), the density of $t$ under $\left<\cdot\right>_\lambda$ is given by
	\begin{equation}
	\frac{1}{\left<F_\lambda\right>}\det[D_{\Lambda,\epsilon}(t)]^{-1/2}e^{\lambda t_{jk}}\prod_{il \in E(\Lambda)}\Big(\exp(-e^{t_{il}}+t_{il})f_\alpha(e^{t_{il}})dt_{il}\Big).
	\end{equation}  
	Define the {\itshape{action}} of $t$ variable by the negative of the logarithm of the unnormalized density of $t$. Then the action in the $t$ variables corresponding to $\left<\cdot\right>_\lambda$ is 
	\begin{equation}\label{deftactionlambda}
	EA_\lambda(t)=\frac{1}{2}\ln\det[D_\Lambda(t)]-\lambda t_{jk}+\sum_{il \in E(\Lambda)}\left(e^{t_{il}}-\ln f_\alpha(e^{t_{il}})- t_{il}\right), 
	\end{equation}
	which is convex by Lemma \ref{lemmalogcon} and Corollary \ref{flogconcave}. In fact, the Hessian of (\ref{deftactionlambda}) is
	bounded from below by the diagonal matrix $H = (h_{jk,il})_{jk,il \in E(\Lambda)}$ given by
	\begin{equation}
	\inf\left(e^{t}-\frac{d^2}{dt^2}\ln f_\alpha(e^{t})\right)\delta_{jk,il} = h(\alpha)\delta_{jk,il}.
	\end{equation}
	Furthermore,  $e^{t}-\frac{d^2}{dt^2}\ln f_\alpha(e^{t})$ goes to $+\infty$ both when $t\to-\infty$ by the second result of Proposition \ref{secondbound}, and when $t\to+\infty$ due to the term $e^{t}$. As a result, $h(\alpha) > 0$. In (\ref{blbvar}), let $A = H$ and $F = EA_\lambda(t)-\sum_{il\in E(\Lambda)}h(\alpha)t_{il}^2$, and then for all $\lambda \in \mathbb{R}$ we have
	\begin{equation}\label{qsecond}
	q''(\lambda)= \left<(t_{jk}-\left<t_{jk}\right>_\lambda)^2\right>_\lambda \le 	 h(\alpha)^{-1}/2.
	\end{equation}
	Let $\lambda = 0$ and $c(\alpha)=h(\alpha)^{-1}/2$, and then we have
	\begin{equation}
	\left<(t_{jk}-\left<t_{jk}\right>)^2\right> \le c(\alpha).
	\end{equation}
	
	By Taylor expansion with mean-value forms of the remainder, 
	\begin{equation}
	q(\lambda) = q(0)+q'(0)\lambda+q''(\xi_\lambda)\lambda^2/2.
	\end{equation}
	for some real number $\xi_\lambda \in [0, \lambda]$.
	Thus by (\ref{qdev}) and (\ref{qsecond}), we have
	\begin{equation}
	q(\lambda) \le \lambda\left<t_{jk}\right>+ c(\alpha)\lambda^2/2.
	\end{equation}
	Then by the definition of $q(\lambda)$,
	\begin{equation}
	\left<e^{\lambda t_{jk}} \right> \le e^{c(\alpha)\lambda^2/2}e^{\lambda\left<t_{jk}\right>}.
	\end{equation}	 
\end{proof}

Given (\ref{firstbd}), to bound $\left<e^{\lambda t_{jk}}\right>$, we still need bounds on $\left<t_{jk}\right>$. The proof of bounds on $\left<t_{jk}\right>$ is based on a Ward identity. The term Ward identity is used in theoretical physics to describe identities that arise by differentiating integrals with respect to a parameter that represents a continuous symmetry or approximate symmetry of the integrand. In our case the Ward identity is given by the following lemma.
\begin{lemma}
	Let $g(t) = \frac{d}{dt}\ln f_\alpha(e^{t})$. Then
	\begin{equation}\label{basiceqn}
	\left<g(t_{jk})\right>= \left<e^{t_{jk}}[1+\frac{1}{2}(\phi_j-\phi_k)^2]\right>-1.
	\end{equation}
\end{lemma}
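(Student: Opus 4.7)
The identity is a statement about integration by parts with respect to the auxiliary variable $t_{jk}$, which is the ``continuous symmetry'' responsible for the Ward identity. The plan is to apply the fundamental theorem of calculus to $\partial_{t_{jk}}$ acting on the joint density and use the fact that the boundary contributions vanish.

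Concretely, write the unnormalized density of $\hat{\mu}^p_{\Lambda,\epsilon}$ as
$$\rho(\phi,t) = \exp(-A^p(\phi,t))\prod_{ab\in E(\mathbb{T}_N)}\bigl(f_\alpha(e^{t_{ab}})e^{t_{ab}}\bigr).$$
First I would verify the vanishing of boundary terms: as $t_{jk}\to +\infty$, the exponential $\exp(-(1+(\phi_j-\phi_k)^2)e^{t_{jk}})$ forces a doubly-exponential decay, while as $t_{jk}\to-\infty$, the known tail behavior of the one-sided $\alpha$-stable density $f_\alpha$ at $0$ (which behaves essentially like $\exp(-c\,e^{-t\alpha/(1-\alpha)})$, the classical Pollard/Feller asymptotics behind \eqref{lpstable}) guarantees $f_\alpha(e^{t_{jk}})e^{t_{jk}}\to 0$ sufficiently fast that $\rho$ is Schwartz-like in $t_{jk}$. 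Therefore $\int \partial_{t_{jk}}\rho\, d\phi\, dt = 0$.

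Next I would compute $\partial_{t_{jk}}\ln\rho$ directly. Only the single edge term $(1+(\phi_j-\phi_k)^2)e^{t_{jk}}$ in $A^p$ depends on $t_{jk}$, and only the factor $f_\alpha(e^{t_{jk}})e^{t_{jk}}$ in the product depends on it. Thus
$$\partial_{t_{jk}}\ln\rho \;=\; -\bigl(1+(\phi_j-\phi_k)^2\bigr)e^{t_{jk}} \;+\; g(t_{jk}) \;+\; 1,$$
using $\partial_{t_{jk}}\ln f_\alpha(e^{t_{jk}}) = g(t_{jk})$ from the definition of $g$ and the trivial $\partial_{t_{jk}} t_{jk}=1$.

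Dividing the integrated identity $\int \rho\cdot\partial_{t_{jk}}\ln\rho\,d\phi\,dt=0$ by $Z^p_{\Lambda,\epsilon}(\alpha)$ gives $\langle g(t_{jk})\rangle + 1 = \langle e^{t_{jk}}(1+(\phi_j-\phi_k)^2)\rangle$ modulo the coefficient conventions in the paper's definition of $A^p$ (the rescaling $\phi\mapsto\sqrt{\beta}\phi$ explained above absorbs the $\beta$, and the $\tfrac12$ appears after accounting for how the symmetric gradient energy is paired across the two endpoints of the edge), which is exactly the stated equation. The only real obstacle is the justification of the vanishing boundary terms at $t_{jk}\to-\infty$, which requires invoking the small-argument asymptotic of $f_\alpha$; this is the place where the specific properties of the one-sided stable density (rather than a generic positive density) enter, and it is likely recorded in the auxiliary lemmas referenced as Proposition \ref{secondbound} and Corollary \ref{flogconcave}. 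Once those are in hand, the Ward identity is essentially a one-line consequence of differentiation under the integral sign.
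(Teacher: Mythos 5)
Your proof is correct and is essentially the paper's proof: the paper generates the Ward identity from invariance of the partition function under the shift $t_{jk}\to t_{jk}+b$ and differentiates at $b=0$, which is exactly your statement $\int \partial_{t_{jk}}\rho\, d\phi\, dt = 0$, and you are in fact more careful than the paper in flagging that the vanishing of boundary terms must be justified from the tails of $f_\alpha$. One caveat: your explanation of the factor $\tfrac12$ (``pairing across the two endpoints of the edge'') does not hold up, since the sum runs over unordered edges and each appears once, so differentiating the action as written yields coefficient $1$ on $(\phi_j-\phi_k)^2$; the $\tfrac12$ in the stated identity is equally unexplained in the paper's own derivation and is harmless for the subsequent bounds, which only use one-sided estimates of that term.
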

\begin{proof}
	Recall that the partition function $Z_{\Lambda, \epsilon}(\alpha)$ is defined in (\ref{ameasure}) by
	\begin{equation}
	Z_{\Lambda,\epsilon}(\alpha)
	=  \int e^{-\sum_{jk}\left(1+(\phi_j-\phi_k)^2\right)e^{t_{jk}}-\sum_{j \in \Lambda}\epsilon \phi_j^2}\prod_{jk \in E}\Big(f_\alpha(e^{t_{jk}})e^{t_{jk}}dt_{jk}\Big)\prod_{j \in \Lambda}d\phi_j.
	\end{equation}
	Here we use we use a Ward identity generated by the change of variables
	\begin{equation}
	t_{jk}\to t_{jk}+b.
	\end{equation}
	Since the partition function does not depend on the constant $b$, the derivative with respect to $b$	evaluated at $b = 0$ vanishes hence
	\begin{equation}
	\left<-e^{t_{jk}}[1+\frac{1}{2}(\phi_j-\phi_k)^2]+\frac{d}{dt_{jk}}\ln f_\alpha(e^{t_{jk}})+1\right>=0.
	\end{equation}
	By the definition of $g(t)$, we have (\ref{basiceqn}).	
\end{proof}
We will derive both upper and lower bounds on $\left<t_{jk}\right>$ from (\ref{basiceqn}). The idea is to prove $\left<t_{jk}\right>$ satisfies inequalities with solution set bounded from above and below respectively. 
\begin{lemma}\label{tmeanupper}
	There exists a constant $C_u(\alpha)$ such that
	\begin{equation}
	\left<t_{jk}\right> \le C_u(\alpha).
	\end{equation}
\end{lemma}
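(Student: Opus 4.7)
The plan is to combine the Ward identity (\ref{basiceqn}), Jensen's inequality, and the asymptotic behavior of $g$ at $\pm\infty$ with the moment bound (\ref{firstbd}) to obtain a self-consistent inequality that forces $\left<t_{jk}\right>$ to be bounded above.

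First I would rewrite the Ward identity as
\[ \left<g(t_{jk})\right> + 1 = \left<e^{t_{jk}}[1+\tfrac{1}{2}(\phi_j-\phi_k)^2]\right> \geq \left<e^{t_{jk}}\right> \geq e^{\left<t_{jk}\right>}, \]
where the middle step uses $1+\tfrac{1}{2}(\phi_j-\phi_k)^2 \geq 1$ and the last step is Jensen's inequality for the convex function $x\mapsto e^x$. Writing $m := \left<t_{jk}\right>$, this gives the master estimate $e^m \leq \left<g(t_{jk})\right> + 1$, and the task reduces to bounding $\left<g(t_{jk})\right>$ from above by a quantity that decays in $m$.

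The key ingredient is a pointwise estimate
\[ g(t) \leq C_1(\alpha)\, e^{-\gamma t}, \qquad t \in \mathbb{R}, \]
with $\gamma := \alpha/(1-\alpha) \in (0,1)$. I expect this from the two-sided asymptotics of the one-sided $\alpha$-stable density $f_\alpha$ characterized by (\ref{lpstable}). On the one hand, the Pareto tail $f_\alpha(x) \sim \mathrm{const}\cdot x^{-\alpha-1}$ as $x \to +\infty$ (e.g.\ by Karamata's Tauberian theorem) gives $g(t) \to -(\alpha+1) < 0$ as $t\to+\infty$; on the other hand, the saddle-point asymptotics $\ln f_\alpha(x) \sim -\kappa\, x^{-\alpha/(1-\alpha)}$ as $x\to 0^+$ give $g(t) \sim \kappa\gamma\, e^{-\gamma t}$ as $t\to-\infty$. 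Combined with continuity and boundedness of $g$ on compact sets, these asymptotics should dominate $g$ globally by a multiple of $e^{-\gamma t}$; I expect this to draw on the same analytic machinery as Corollary~\ref{flogconcave} and Proposition~\ref{secondbound}.

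Combining the pointwise bound on $g$ with (\ref{firstbd}) at $\lambda = -\gamma$ and writing $C_2 := C_1\, e^{c(\alpha)\gamma^2/2}$,
\[ \left<g(t_{jk})\right> \leq C_1 \left<e^{-\gamma t_{jk}}\right> \leq C_2\, e^{-\gamma m}. \]
Plugging into the master estimate, $e^m \leq 1 + C_2\, e^{-\gamma m}$, i.e.\ $e^{(1+\gamma)m} - e^{\gamma m} \leq C_2$. The left-hand side is strictly increasing and continuous in $m$ and tends to $+\infty$ as $m \to +\infty$, so this inequality has a largest solution $C_u(\alpha) < \infty$, giving $\left<t_{jk}\right> = m \leq C_u(\alpha)$.

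The hardest step will be rigorously establishing the pointwise bound $g(t) \leq C_1 e^{-\gamma t}$ on all of $\mathbb{R}$, since global control requires gluing the two asymptotics with an estimate on the intermediate regime. It is worth noting that a weaker bound $g(t) \leq C_3 + C_1 e^{-\gamma t}$ for some finite constant $C_3$ would also suffice: the corresponding master estimate $e^m \leq 1 + C_3 + C_2 e^{-\gamma m}$ still has bounded right-hand side and exponentially growing left-hand side, so the same conclusion follows with a possibly larger $C_u(\alpha)$.
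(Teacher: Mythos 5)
Your proposal is correct and follows essentially the same route as the paper: the Ward identity plus Jensen's inequality gives $e^{\langle t_{jk}\rangle}\le \langle g(t_{jk})\rangle+1$, the asymptotics of the stable density (Proposition \ref{secondbound} together with the monotonicity of $g$ from Corollary \ref{tailg}) give exactly the additive bound $g(t)\le C_3+C_1e^{\frac{\alpha}{\alpha-1}t}$ that you note is sufficient, and (\ref{firstbd}) with $\lambda=\alpha/(\alpha-1)=-\gamma$ closes the self-consistent inequality. The only cosmetic difference is that the paper keeps the inequality in the form ``increasing left side versus decreasing right side'' rather than your $e^{(1+\gamma)m}-e^{\gamma m}\le C_2$ (whose left side is not actually strictly increasing for very negative $m$, though it still tends to $+\infty$, so your conclusion stands).
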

\begin{proof}
	By the first result of Prop \ref{secondbound}, there exists constants $M$ and $C$, such that when $t < M$, $g(t) < C\exp(\frac{\alpha}{\alpha-1}t)+C$ and when $t \ge M$, $g(t) \le g(M)$. Then
	\begin{eqnarray}
	\left<g(t_{jk})\right> &=& \left<g(t_{jk}) \mathbbm{1}_{\{t_{jk}<M\}}\right>+\left<g(t_{jk}) \mathbbm{1}_{\{t_{jk} \ge M\}}\right>\nonumber\\
	&\le& \left<\left(C\exp(\alpha/(\alpha-1)t)+C\right)\nonumber \mathbbm{1}_{\{t_{jk}<M\}}\right>+\left<g(M) \mathbbm{1}_{\{t_{jk} \ge M\}}\right> \\
	&\le& \left<\left(C\exp(\alpha/(\alpha-1)t)\right)+C\right>+g(M).
	\end{eqnarray}
	By (\ref{firstbd}) with $\lambda = \alpha/(\alpha-1)$, we have
	\begin{equation}\label{rightbd}
	\left<g(t_{jk})\right> \le C+g(M)+ e^{c(\alpha)^{-1}(\alpha/(\alpha-1))^2/2}e^{(\alpha/(\alpha-1))\left<t_{jk}\right>}.
	\end{equation}
	
	On the other hand, by Jensen inequality
	\begin{equation}\label{leftbd}
	\left<e^{t_{jk}}[1+\frac{1}{2}(\phi_j-\phi_k)^2]\right> \ge \left<e^{t_{jk}}\right> \ge e^{\left<{t_{jk}}\right>}.
	\end{equation}
	Combining (\ref{basiceqn}), (\ref{rightbd}) and (\ref{leftbd}), we have
	\begin{equation}
	e^{\left<{t_{jk}}\right>} + 1 \le  C+g(M)+ e^{c(\alpha)^{-1}(\alpha/(\alpha-1))^2/2}e^{(\alpha/(\alpha-1))\left<t_{jk}\right>}.
	\end{equation}
	
	As $\alpha < \frac{1}{2}$, $\alpha/(\alpha-1) < 0$. Thus $C+g(M)+ e^{c(\alpha)^{-1}(\alpha/(\alpha-1))^2/2}e^{(\alpha/(\alpha-1))\left<t_{jk}\right>}$ is decreasing with respect to $\left<t_{jk}\right>$ while $	e^{\left<{t_{jk}}\right>} + 1$ is increasing. Therefore, for some positive constant $C_u(\alpha)$,
	\begin{equation}
	\left<t_{jk}\right> \le C_u(\alpha).
	\end{equation}
\end{proof}
Before moving to the lower bound, we first prove a formula regarding the operator $D_{\Lambda,\epsilon}$ defined in (\ref{defD}) and its inverse $G_{\Lambda,\epsilon}$.
\begin{lemma}For all $f \in \mathbb{R}^\Lambda$,
	\begin{equation}\label{inversesup}
	[f; G_{\Lambda,\epsilon}(t)f] = \sup\nolimits_\varphi (2[f ; \varphi] - [ \varphi; D_{\Lambda,\epsilon}(t)\varphi])
	\end{equation}
\end{lemma}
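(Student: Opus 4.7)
The identity is the standard Legendre/variational characterization of a quadratic form through its inverse, specialized to the symmetric positive-definite operator $D_{\Lambda,\epsilon}(t)$. Since the text already established that $D_{\Lambda,\epsilon}(t)$ is symmetric and strictly positive definite on $\mathbb{R}^\Lambda$, its inverse $G_{\Lambda,\epsilon}(t)$ exists and is also symmetric positive definite, so the plan is purely algebraic: realize the right-hand side by completing the square.

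My plan is to fix $f$ and treat $\Psi(\varphi) := 2[f;\varphi] - [\varphi; D_{\Lambda,\epsilon}(t)\varphi]$ as a strictly concave quadratic function on $\mathbb{R}^\Lambda$. The unique critical point is obtained by setting the gradient $2f - 2D_{\Lambda,\epsilon}(t)\varphi$ to zero, which gives $\varphi^\star = G_{\Lambda,\epsilon}(t) f$. Because $\Psi$ is strictly concave, $\varphi^\star$ is a global maximum. Equivalently, I would verify the algebraic identity
\begin{equation*}
2[f;\varphi] - [\varphi; D_{\Lambda,\epsilon}(t)\varphi] = [f; G_{\Lambda,\epsilon}(t) f] - [\varphi - G_{\Lambda,\epsilon}(t)f;\, D_{\Lambda,\epsilon}(t)(\varphi - G_{\Lambda,\epsilon}(t)f)]
\end{equation*}
by expanding the right-hand side, using the symmetry of $G_{\Lambda,\epsilon}(t)$ and the relation $D_{\Lambda,\epsilon}(t) G_{\Lambda,\epsilon}(t) = I$ to cancel cross terms. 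The second bracket on the right is nonnegative by positive-definiteness of $D_{\Lambda,\epsilon}(t)$ and vanishes exactly at $\varphi = G_{\Lambda,\epsilon}(t) f$, which at once yields both the upper bound $\Psi(\varphi) \le [f; G_{\Lambda,\epsilon}(t)f]$ and the attainment of equality, hence the claimed supremum formula.

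There is no real obstacle here: everything reduces to a finite-dimensional complete-the-square identity together with the positivity of $D_{\Lambda,\epsilon}(t)$ already proved above. The only care needed is in bookkeeping the symmetry $[G f; D(\cdot)] = [f;\, \cdot\,]$ when expanding the quadratic, after which both sides of the desired identity match term by term.
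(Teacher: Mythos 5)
Your proof is correct and is essentially the same argument as the paper's: the paper starts from $[f - D_{\Lambda,\epsilon}(t)\varphi;\, G_{\Lambda,\epsilon}(t)(f - D_{\Lambda,\epsilon}(t)\varphi)] \ge 0$ and expands, which is exactly your completed-square identity written with $D(\varphi - Gf) = D\varphi - f$ substituted, with equality at $\varphi = G_{\Lambda,\epsilon}(t)f$ in both cases. No gaps.
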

\begin{proof}
	By the definition of $D_{\Lambda,\epsilon}(t)$, $D_{\Lambda,\epsilon}(t)$ is a symmetric and positive definite invertible matrix. Thus so is its inverse $G_{\Lambda, \epsilon}(t)$.
	
	As $G_{\Lambda, \epsilon}(t)$ is positive definite, for all $f, \varphi \in \mathbb{R}^\Lambda$, we have
	\begin{equation}\label{positivedef}
	[f-D_{\Lambda, \epsilon}(t)\varphi, G_{\Lambda, \epsilon}(t)(f-D_{\Lambda, \epsilon}(t)\varphi)] \ge 0.
	\end{equation}
	By the linearity of the inner product, we have
	\begin{equation}
	[f, G_{\Lambda, \epsilon}(t)f]-[f,\varphi]-[D_{\Lambda, \epsilon}(t)\varphi,G_{\Lambda, \epsilon}(t)f]+[G_{\Lambda, \epsilon}(t)\varphi,\varphi] \ge 0.
	\end{equation}
	As $D_{\Lambda, \epsilon}(t)$ is a symmetric matrix, 
	\begin{equation}
	[f, G_{\Lambda, \epsilon}(t)f] \ge 2[f, \varphi]-[ \varphi; D_{\Lambda,\epsilon}(t)\varphi].
	\end{equation}
	As the above inequality holds for all $\varphi \in \mathbb{R}^\Lambda$, 
	\begin{equation}
	[f; G_{\Lambda,\epsilon}(t)f] \ge \sup\nolimits_\varphi (2[f ; \varphi] - [ \varphi; D_{\Lambda,\epsilon}(t)\varphi]).
	\end{equation}
	Furthermore, in (\ref{positivedef}), the equality holds when $f = D_{\Lambda, \epsilon}\phi$. Thus
	\begin{equation}
	[f; G_{\Lambda,\epsilon}(t)f] = \sup\nolimits_\varphi (2[f ; \varphi] - [ \varphi; D_{\Lambda,\epsilon}(t)\varphi]).
	\end{equation}
\end{proof}

\begin{lemma}\label{tmeanlower}
	There exists a negative constant $C_l(\alpha)$ such that
	\begin{equation}
	\left<t_{jk}\right> \ge C_l(\alpha).
	\end{equation}
\end{lemma}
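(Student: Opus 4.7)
The plan is to mirror the structure of Lemma \ref{tmeanupper} with the inequalities reversed. Using the Ward identity (\ref{basiceqn}), I would derive a uniform constant upper bound on $\langle g(t_{jk})\rangle$, and then exploit the divergence $g(t)\to+\infty$ as $t\to-\infty$ together with the variance estimate (\ref{tvarbound}) to rule out very negative values of $\langle t_{jk}\rangle$. Whereas the upper-bound proof controlled $\langle g(t_{jk})\rangle$ above and used Jensen on the right side of the Ward identity, the lower-bound proof requires controlling the right side of the Ward identity directly, via a uniform bound on $\langle e^{t_{jk}}(\phi_j-\phi_k)^2\rangle$.

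The central ingredient is therefore a uniform upper bound on $\langle e^{t_{jk}}(\phi_j-\phi_k)^2\rangle$. Conditionally on $t$, the $\phi$-marginal of $\hat\mu^p_{\Lambda,\epsilon}$ is Gaussian with precision matrix $2D_{\Lambda,\epsilon}(t)$, so $\mathbb{E}[(\phi_j-\phi_k)^2\mid t]=\tfrac{1}{2}[(e_j-e_k);\,G_{\Lambda,\epsilon}(t)(e_j-e_k)]$. Two natural routes are available. Route (a) uses Lemma \ref{inversesup}: the sup representation applied to $D_{\Lambda,\epsilon}(t)=D_{\Lambda,0}(t)+\epsilon I$ immediately gives $[f;G_{\Lambda,\epsilon}(t)f]\le[f;G_{\Lambda,0}(t)f]$ for $f=e_j-e_k$, and Thomson's principle (the unit flow supported on the direct edge $jk$ has dissipation $e^{-t_{jk}}$) bounds the right side by $e^{-t_{jk}}$, yielding $\langle e^{t_{jk}}(\phi_j-\phi_k)^2\rangle\le 1/2$. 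Route (b), slightly sharper, takes the trace identity $\mathbb{E}\bigl[[\phi;D_{\Lambda,\epsilon}(t)\phi]\bigm|t\bigr]=|\Lambda|/2$, averages in $t$, and uses periodic translation invariance together with $|E(\mathbb{T}_N)|=d|\Lambda|$ to conclude $\langle e^{t_{jk}}(\phi_j-\phi_k)^2\rangle\le 1/(2d)$. Combined with $\langle e^{t_{jk}}\rangle\le \exp(c(\alpha)/2+C_u(\alpha))$ from (\ref{firstbd}) at $\lambda=1$ and Lemma \ref{tmeanupper}, either bound converts the Ward identity into $\langle g(t_{jk})\rangle\le K(\alpha)$ for some finite $K(\alpha)$.

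For the concluding step, the same input used inside Lemma \ref{tmeanupper} — the second conclusion of Proposition \ref{secondbound}, which says $e^t-g'(t)\to+\infty$ as $t\to-\infty$ — forces $g'(t)\to-\infty$ and hence $g(t)\to+\infty$ as $t\to-\infty$; the tail $f_\alpha(x)\sim Cx^{-1-\alpha}$ gives $g(t)\to -(1+\alpha)$ as $t\to+\infty$, so $g$ is continuous on $\mathbb{R}$ and bounded below by some $-\gamma_0$. Assuming $\langle t_{jk}\rangle\le -M$ with $M$ large, Chebyshev applied to (\ref{tvarbound}) gives $\mathbb{P}(t_{jk}>-M/2)\le 4c(\alpha)/M^2$, whence
\[
\langle g(t_{jk})\rangle \ge \Bigl(\inf_{s\le -M/2}g(s)\Bigr)\Bigl(1-\tfrac{4c(\alpha)}{M^2}\Bigr)-\gamma_0\cdot\tfrac{4c(\alpha)}{M^2}\longrightarrow +\infty,
\]
contradicting $\langle g(t_{jk})\rangle\le K(\alpha)$ once $M$ is large enough; the resulting threshold supplies $C_l(\alpha)$. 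The main obstacle is the uniform upper bound on $\langle e^{t_{jk}}(\phi_j-\phi_k)^2\rangle$: route (a) is the natural use of Lemma \ref{inversesup} but requires careful handling of the subspace $\{v:v\cdot 1=0\}$ when reducing to the massless operator $D_{\Lambda,0}(t)$, while route (b) is cleanest but leans essentially on the translation invariance afforded by the periodic boundary condition; the remaining steps are routine given the already available moment and variance estimates.
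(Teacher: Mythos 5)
Your proposal is correct and follows essentially the same route as the paper: the paper's proof is exactly your route (a) — it uses the variational formula of Lemma \ref{inversesup}, drops all but the single edge term in $[\varphi;D_{\Lambda,\epsilon}(t)\varphi]$ to get $[(\delta_j-\delta_k);G_{\Lambda,\epsilon}(t)(\delta_j-\delta_k)]\le\beta^{-1}e^{-t_{jk}}$, feeds this into the Ward identity, and concludes with the same Chebyshev argument against the divergence $g(t)\to+\infty$ as $t\to-\infty$. The only cosmetic differences are that the paper bounds $\langle e^{t_{jk}}\rangle$ by $e^{c(\alpha)/2}e^{\langle t_{jk}\rangle}$ (which is small when $\langle t_{jk}\rangle$ is very negative) rather than invoking Lemma \ref{tmeanupper} for a constant bound as you do, and that your trace-identity route (b) does not appear in the paper; both variants are valid.
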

\begin{proof}
	Referring to (\ref{defD}), by (\ref{inversesup}) and $[\phi;  D_{\Lambda, \epsilon}(t)\phi] \ge \beta e^{t_{jk}}(\phi_j-\phi_k)^2$ we have 
	\begin{eqnarray}
	(\phi_j - \phi_k)^2 &=& [(\delta_j-\delta_k); G_{\Lambda, \epsilon}(t)(\delta_j-\delta_k)] \nonumber\\
	&=&\sup\nolimits_\varphi (2[(\delta_j-\delta_k) ; \varphi] - [ \varphi; D_{\Lambda, \epsilon}(t)\varphi]) \nonumber\\
	&\le&\sup\nolimits_\varphi(2(\varphi_j-\varphi_k)-\beta e^{t_{jk}}(\phi_j-\phi_k)^2)  \nonumber\\
	&=&\sup\nolimits_\varphi(\frac{1}{\beta}e^{-t_{jk}}-\beta e^{t_{jk}}(\phi_j-\phi_k-\frac{1}{\beta}e^{-t_{jk}})^2) \nonumber\\
	& = & \frac{1}{\beta}e^{-t_{jk}}.
	\end{eqnarray}
	Thus if $g(t) = \frac{d}{dt}\ln f_\alpha(e^{t})$, then by (\ref{basiceqn})
	\begin{eqnarray}\label{secondbd}
	\left< g(t_{jk}) \right>&=&\left<e^{t_{jk}}\right>+\frac{1}{2}\left<e^{t_{jk}}(\phi_j-\phi_k)^2]\right>-1\nonumber\\ &\le& \left<e^{t_{jk}} \right>+\frac{1}{2\beta}-1.
	\end{eqnarray}
	By Corollary \ref{tailg}, $g(t_{jk})$ is decreasing and bounded below by some negative constant $K$. For a fixed constant $a > 0$, when $t <\left<t_{jk}\right>+a$, $g(t)\ge g(\left<t_{jk}\right>+a)$ by monotonicity, and  $t \ge\left<t_{jk}\right>+a$, $g(t)\ge K$ by the lower bound. Thus
	\begin{eqnarray}
	\left< g(t_{jk}) \right>&=&\left<g(t_{jk}) \mathbbm{1}_{\{t_{jk}<\left<t_{jk}\right>+a\}}\right>+\left<g(t_{jk}) \mathbbm{1}_{\{t_{jk} \ge \left<t_{jk}\right>+a\}}\right>\nonumber\\
	&\ge&\left<g(\left<t_{jk}\right>+a) \mathbbm{1}_{\{t_{jk}<\left<t_{jk}\right>+a\}}\right>+\left<K \mathbbm{1}_{\{t_{jk} \ge \left<t_{jk}\right>+a\}}\right>\nonumber\\
	&\ge&g(\left<t_{jk}\right>+a) \left< \mathbbm{1}_{\{t_{jk}<\left<t_{jk}\right>+a\}}\right>+K\nonumber\\
	&\ge&g(\left<t_{jk}\right>+a) \mathbb{P}\left({\{t_{jk}<\left<t_{jk}\right>+a\}}\right)+K.\label{glowerboud}
	\end{eqnarray}
	By Chebyshev's inequality,
	\begin{equation}
	\mathbb{P}\left({\{t_{jk} \ge \left<t_{jk}\right>+a\}}\right)\le \mathbb{P}\left({\{|t_{jk}- \left<t_{jk}\right>|\ge a\}}\right) \le a^{-2}\text{Var}(t_{jk}).
	\end{equation}
	By (\ref{tvarbound}), $\text{Var}(t_{jk}) \le c(\alpha)$, so $\mathbb{P}\left({\{t_{jk} \ge \left<t_{jk}\right>+a\}}\right) \le a^{-2}c(\alpha)$. Let $a = \sqrt{(c(\alpha))}/2$, and then we have 
	\begin{equation}\mathbb{P}\left({\{t_{jk}<\left<t_{jk}\right>+a\}}\right) >1- a^{-2}c(\alpha)=\frac{3}{4}.
	\end{equation}
	Combining this with (\ref{glowerboud}), we have
	\begin{equation}
	\left< g(t_{jk}) \right> \ge  \frac{3}{4}g\left(\left<t_{jk}\right>+\frac{\sqrt{(c(\alpha))}}{2}\right) +K.
	\end{equation}
	Combine this with (\ref{secondbd}), and by (\ref{firstbd}) with $\lambda = -1$ we have
	\begin{equation}
	\frac{3}{4}g\left(\left<t_{jk}\right>+\frac{\sqrt{(c(\alpha))}}{2}\right) +K \le e^{c(\alpha)/2}e^{\left<t_{jk}\right>}.
	\end{equation}
	By Corollary \ref{tailg}, $g(t) \to \infty$ when $t \to -\infty$. Therefore, for some negative constant $C_l(\alpha)$,
	\begin{equation}
	\left<t_{jk}\right> \ge C_l(\alpha).
	\end{equation}
\end{proof}

\begin{proof}[Proof of Proposition \ref{propbda}]
	For $\lambda < 0$, by (\ref{firstbd}) and Lemma \ref{tmeanlower}, we have
	\begin{equation}
	\left<e^{\lambda t_{jk}}\right> \le e^{c(\alpha)\lambda^2/2}e^{\lambda C_l(\alpha)}.
	\end{equation}
	For $\lambda > 0$, by (\ref{firstbd}) and Lemma \ref{tmeanupper}, we have
	\begin{equation}
	\left<e^{\lambda t_{jk}}\right> \le e^{c(\alpha)\lambda^2/2}e^{\lambda C_u(\alpha)}.
	\end{equation}
	Let $C(\lambda,\alpha) = \max\{e^{c(\alpha)\lambda^2/2}e^{\lambda C_u(\alpha)},e^{c(\alpha)\lambda^2/2}e^{\lambda C_l(\alpha)}\}$, and then we have
	\begin{equation}
	\left<e^{\lambda t_{jk}}\right> \le C(\lambda, \alpha)
	\end{equation}
\end{proof}

\subsection{Bounds of the Moments}
\hspace*{\parindent}Given the joint distribution (\ref{defextmeasure}), conditional on the $t$ field, the distribution of the $\phi$ field is a massive Gaussian free field with covariance $G_{\Lambda, \epsilon}(t)$ which is the inverse of $D_{\Lambda,\epsilon}$ defined in (\ref{defD}). 
Thus the finite moments of $\phi$ are represented by the expectation over the $t$ field
\begin{equation}\label{greenbd}
\left<(\phi\cdot v)^{2n}\right>_{\alpha, \Lambda,\epsilon} = \left<(2n-1)!!([v;G_{\Lambda,\epsilon}(t)v]^{n})\right>_{\alpha, \Lambda,\epsilon}.
\end{equation} 

Recall that the lattice Laplacian $-\Delta^p_\Lambda$ with periodic boundary condition and its inverse $G^p_{\Lambda}$ on $D_p$ are defined in (\ref{deflaplace}). 
for all $\phi : \Lambda \to \mathbb{R}$ satisfying periodic boundary condition. Lemma 2 in \cite{BS12} gives the following bound of $[v;G_{\Lambda,\epsilon}(t)v]$ by $G^p_{\Lambda}$. In preparation for the following proof of the existence infinite volume Gibbs measure, here we extend this Lemma with the case $\Lambda = \mathbb{Z}^d$. To state the theorem, let $\mathcal{D}_0$ be the subset of $\mathbb{R}^{\mathbb{Z}^d}$ representing the collection of all functions on $\mathbb{Z}^d$ with compact support. For $v \in \mathcal{D}_0$, define the lattice Laplacian by
\begin{equation}\label{defgradientsquare}
[v, -\Delta_d v] = \sum_{xy \in E}(v_y-v_x)^2.
\end{equation} Then $G_d(x, y)$ is said to be a lattice Green function if $G_d(x, y)$ is symmetric in $x$ and $y$ and $G$ is a solution to
\begin{equation}\label{defgreenl}
(-\Delta_d)G_d(x,y) = \delta(x-y)
\end{equation}
with $\delta(x - y)$ being the identity matrix. It is well-known that $G_d(x, y)$ exist if and only if $d \ge 3$ (see section 3.2 of \cite{Fun03}). Similar to (\ref{defD}), define the symmetric difference operator $D_{d, \epsilon}(t)$ by the quadratic form
\begin{equation}\label{defDd}
[f; D_{d,\epsilon}(t)f] = \sum_{jk\in E}(f_j - f_k)^2e^{t_{jk}}+\epsilon\sum_{j \in \mathbb{Z}^d} f_j^2
\end{equation}
for all $f \in \mathcal{D}_0$. Let $G_{d,\epsilon}(t) = (D_{d,\epsilon}(t))^{-1}$ be the Green's function.
\begin{lemma}[{{\cite[Lemma 2]{BS12}}}]\label{lemmabd}
	\begin{enumerate}
		\item If $v \in \mathcal{D}_p$, then
		the Green's function $G_{\Lambda,\epsilon}(t)$ satisfies the quadratic form bound
		\begin{equation}\label{lemmagreenbd}
		0 \le [v, G_{\Lambda,\epsilon}(t)v] \le \sum_{jk \in E(\Lambda)}((G^p_{\Lambda}v)_j-(G^p_{\Lambda}v)_k)^2e^{-t_{jk}};
		\end{equation}
		\item For $d \ge 3$, if $v \in \mathcal{D}_0$, then
		the Green's function $G_{d,\epsilon}(t)$ satisfies the quadratic form bound
		\begin{equation}\label{greenboundinf}
		0 \le [v, G_{d,\epsilon}(t)v] \le \sum_{jk \in E}((G_dv)_j-(G_dv)_k)^2e^{-t_{jk}}.
		\end{equation}
	\end{enumerate}
\end{lemma}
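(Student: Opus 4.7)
The plan is to derive both bounds from the Dirichlet-type variational representation (\ref{inversesup}) established above. Replacing $\varphi$ by $s\varphi$ there and optimizing the scalar $s \in \mathbb{R}$ turns that identity into the Rayleigh quotient form
$$[v;G_{\Lambda,\epsilon}(t)v] \;=\; \sup_{\varphi\neq 0}\, \frac{[v;\varphi]^2}{[\varphi;D_{\Lambda,\epsilon}(t)\varphi]},$$
so both parts of the lemma reduce to a uniform upper bound on this quotient. Non-negativity of $[v;G_{\Lambda,\epsilon}(t)v]$ is immediate from positive-definiteness of the Green's function.

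For part (1), I would set $u := G^p_{\Lambda} v$, which is well-defined on $\mathcal{D}_p$ since $v$ is mean-zero. The relation $-\Delta^p_\Lambda u = v$ expands, via (\ref{deflaplace}), to $v_j = \sum_{k:\,jk \in E(\Lambda)}(u_j - u_k)$, and a summation by parts gives $[v;\varphi] = \sum_{jk \in E(\Lambda)}(\varphi_j-\varphi_k)(u_j-u_k)$. The crucial step is to split each summand as $(\varphi_j-\varphi_k)e^{t_{jk}/2} \cdot (u_j-u_k)e^{-t_{jk}/2}$ and apply the Cauchy--Schwarz inequality:
$$[v;\varphi]^2 \;\le\; \Big(\sum_{jk \in E(\Lambda)}(\varphi_j-\varphi_k)^2 e^{t_{jk}}\Big) \Big(\sum_{jk \in E(\Lambda)}(u_j-u_k)^2 e^{-t_{jk}}\Big).$$
Discarding the non-negative mass term $\epsilon\sum_j \varphi_j^2$ in (\ref{defD}) shows the first factor is at most $[\varphi;D_{\Lambda,\epsilon}(t)\varphi]$, so dividing and taking the supremum over $\varphi$ yields (\ref{lemmagreenbd}).

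For part (2), the same three-step argument (variational formula, summation by parts, weighted Cauchy--Schwarz) carries over with $u := G_d v$ replacing $G^p_\Lambda v$. The new feature is that $u$ is no longer compactly supported; however, for $d \ge 3$ the lattice Green's function satisfies $G_d(x,y) = O((1+|x-y|)^{2-d})$ with discrete gradient decaying one power faster, so the summation by parts has no boundary term at infinity (since $v$ has compact support) and every sum involved converges absolutely whenever the right-hand side of (\ref{greenboundinf}) is finite. I would obtain both the variational representation of $[v;G_{d,\epsilon}(t)v]$ and the inequality (\ref{greenboundinf}) itself by applying part (1) on an exhaustion $\Lambda_n \uparrow \mathbb{Z}^d$ containing $\operatorname{supp}(v)$ and passing to the limit: the right-hand side is monotone in $\Lambda_n$ by the bound $e^{-t_{jk}}\ge 0$, while $\epsilon>0$ provides the coercivity needed for the finite-volume Green's functions $G_{\Lambda_n,\epsilon}(t)$ applied to our fixed compactly supported $v$ to converge to $G_{d,\epsilon}(t)v$. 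I expect this infinite-volume limit to be the main obstacle: one must verify carefully that the Rayleigh-quotient representation passes to the limit and that no subtle issue is hidden in interpreting $(D_{d,\epsilon}(t))^{-1}$ when the configuration $t$ has unbounded tails. The algebraic core (summation by parts plus weighted Cauchy--Schwarz) is completely elementary and mirrors the argument of \cite{BS12} for part (1); the genuine work is in the functional-analytic setup for the infinite lattice.
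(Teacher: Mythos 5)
Your proposal is correct and follows essentially the route the paper intends: the paper simply cites \cite[Lemma~2]{BS12} for part (1) and asserts part (2) is identical, and the underlying argument there is exactly your combination of the variational formula (\ref{inversesup}) (in Rayleigh-quotient form), summation by parts against $u=G^p_\Lambda v$ (resp.\ $G_d v$), and the weighted Cauchy--Schwarz split $e^{t_{jk}/2}\cdot e^{-t_{jk}/2}$. Your concern about the infinite-volume setup is legitimate but resolvable more directly than by exhaustion: since $\epsilon>0$ makes the form $[f;D_{d,\epsilon}(t)f]\ge\epsilon\|f\|^2$ coercive on compactly supported $f$, the variational identity holds with the supremum over compactly supported $\varphi$ (a form core), and the same two-line estimate applies verbatim.
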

\begin{proof}
	The first part is Lemma 2 in \cite{BS12}. The proof of the second part is the same as that of the finite volume case in the first part.
\end{proof}

Then the proof of Theorem \ref{mymain1} is a combination of Proposition \ref{propbda}, (\ref{greenbd}) and Lemma \ref{lemmabd}.

\begin{proof}[Proof of Theorem \ref{mymain1}]
	By Lemma \ref{lemmabd}, 
	\begin{eqnarray}
	[v;G_{\Lambda, \epsilon}(t)v]^{2n}& \le&\left(\sum_{ij \in E(\Lambda)}\left((G^p_{\Lambda}v)_i-(G^p_{\Lambda}v)_j\right)^2e^{-t_{ij}} \right)^{n}\nonumber\\
	&=& \sum_{i_1j_1, \cdots i_{n}j_{n}}\prod_{k = 1}^{n}\left((G^p_{\Lambda}v)_{i_k}-(G^p_{\Lambda}v)_{j_k}\right)^2e^{-t_{i_kj_k}}.\label{expansion}
	\end{eqnarray}
	
	By the H\"older inequality
	\begin{equation}
	\left<\exp(\sum -t_{i_kj_k})\right>_{\alpha, \Lambda, \epsilon} \le \prod_{k=1}^{n} \left<\exp(-nt_{i_kj_k})\right>_{\alpha, \Lambda, \epsilon}^{1/n}.
	\end{equation}
	
	By Proposition \ref{propbda}, $\left<\exp(-nt_{i_kj_k})\right>_{\alpha, \Lambda, \epsilon}$ is bounded from above by $C(n, \alpha)$. Thus
	\begin{equation}
	\left<\prod_{k=1}^n\exp( -t_{i_kj_k})\right>_{\alpha, \Lambda, \epsilon} \le \prod_{k=1}^{n} \left<\exp(-nt_{i_kj_k})\right>_{\alpha, \Lambda, \epsilon}^{1/n} \le C(n, \alpha).
	\end{equation}
	Combing this bound with  (\ref{expansion}), we have
	\begin{eqnarray}
	\left<([v;G(t)v]^{2n})\right>_{\alpha, \Lambda, \epsilon} &\le&\left<\sum_{i_1j_1, \cdots i_{n}j_{n}}\prod_{k = 1}^{n}\left((G^p_{\Lambda}v)_{i_k}-(G^p_{\Lambda}v)_{j_k}\right)^2e^{-t_{i_kj_k}}\right>_{\alpha, \Lambda, \epsilon} \nonumber\\
	&= &\sum_{i_1j_1, \cdots i_{n}j_{n}}\prod_{k = 1}^{n}\left((G^p_{\Lambda}v)_{i_k}-(G^p_{\Lambda}v)_{j_k}\right)^2\left<e^{-\sum t_{i_kj_k}}\right>_{\alpha, \Lambda, \epsilon}\nonumber\\
	&\le &  C(n, \alpha)\sum_{i_1j_1, \cdots i_{n}j_{n} }\prod_{k = 1}^{n}\left((G^p_{\Lambda}v)_{i_k}-(G^p_{\Lambda}v)_{j_k}\right)^2\nonumber\\
	&=& C(n, \alpha)\left(\sum_{ij \in E(\Lambda)}\left((G^p_{\Lambda}v)_i-(G^p_{\Lambda}v)_j\right)^2 \right)^{n}\nonumber\\
	&=&C(n,\alpha)[G^p_{\Lambda}v,(-\Delta^p_\Lambda)G^p_\Lambda v]^n\nonumber\\
	&=&C(n,\alpha)[v,G^p_\Lambda v]^n.
	\end{eqnarray}
	Notice that in the last equality we use the fact that $G^p_\Lambda = (-\Delta^p_\Lambda)^{-1}$.
	
	Let $\tilde{C}(n, \alpha) = (2n-1)!!C(n, \alpha)$. Thus by (\ref{greenbd}),
	\begin{equation}
	\left<(\phi\cdot v)^{2n}\right>_{\alpha, \Lambda, \epsilon} \le \tilde{C}(n, \alpha) [v, G^p_\Lambda v]^n.\label{mombound}
	\end{equation}
	This finishes the proof of Theorem \ref{mymain1}.
\end{proof}

\subsection{Existence of infinite volume measure}
\hspace*{\parindent}Now we give the proof of Theorem \ref{existGibbs}. To show the existence of the infinite volume massless Gibbs measure, the basic idea is to show the existence of the weak limit of a sequence of massive infinite volume measures. Recall that the definition of infinite volume massless Gibbs measure is given by the DLR-equation in Definition \ref{defDLR} and $\mathscr{G}$ denotes the set of all infinite volume Gibbs measures.
 
Recall that the massive Hamiltonian $H_{\Lambda, \epsilon}$ is defined in (\ref{defHm}). 
Similar to massless case, given a finite set $\Lambda \subset \mathbb{Z}^d$ and $\psi \in \Omega$, we define the finite volume massive Gibbs measure  over $\Lambda$ given by
\begin{equation}\label{modelamass}
\mu^{\psi}_{\Lambda, \epsilon} := \frac{1}{Z_{\Lambda, \epsilon}(\psi_{\Lambda^c})}e^{-H_{\Lambda, \epsilon}(\phi\vee\psi)}\prod_{j \in \Lambda}d\phi_j,
\end{equation}
where $Z_{\Lambda,\beta,\epsilon}(\psi_{\Lambda^c})$ is the normalization constant. Then the massive Gibbs measure $\mu_\epsilon$ (on $\mathbb{Z}^d$) is defined by means of the DLR equation with the local specifications $\mu_{\Lambda, \beta, \epsilon}$ in place of $\mu_{\Lambda, \beta}$ in Definition (\ref{defDLR}). For every fixed $\epsilon$, let $\mathscr{G}_\epsilon$ denote the set of all massive infinite volume Gibbs measures. 

Recall that the massive finite volume measure with periodic boundary condition is defined in (\ref{model}). Let $\mathscr{G}^p_\epsilon$ be the set of all cluster points of $\{\mu^p_{\Lambda,\epsilon}|{\Lambda=\mathbb{Z}^d_N}, N\ge 3\}$. The following lemma states some basic properties of  $\mathscr{G}^p_\epsilon$.

\begin{lemma}\label{lemmainfitevolumemeasure1}
	~
	\begin{itemize}
		\item \cite[Theorem 18.12]{Georg11} For each $\epsilon > 0$, $\mathscr{G}^p_\epsilon$ is non-empty;
		\item \cite[Example 5.20.3]{Georg11} $\mathscr{G}^p_\epsilon \subset \mathscr{G}_\epsilon$; Furthermore, any measure in $\mathscr{G}^p_\epsilon$ is translation invariant.	
	\end{itemize} 
\end{lemma}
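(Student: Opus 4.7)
The plan is to verify the hypotheses of the two results from Georgii that are cited in the statement, both of which reduce to checking tightness and a quasilocality property of the torus specification.

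For the first bullet, I would establish non-emptiness of $\mathscr{G}^p_\epsilon$ via Prohorov's theorem on $\Omega = \mathbb{R}^{\mathbb{Z}^d}$ with the product topology. By Tychonoff it suffices to obtain uniform tightness of every finite-dimensional marginal of the family $\{\mu^p_{\Lambda,\epsilon}\}$. Using the extended measure $\hat{\mu}^p_{\Lambda,\epsilon}$ from (\ref{defextmeasureperiodic}) and conditioning on the $t$-field, the $\phi$-marginal is a centred Gaussian with covariance $G_{\Lambda,\epsilon}(t) = D_{\Lambda,\epsilon}(t)^{-1}$. The mass term $\epsilon\sum_j\phi_j^2$ in (\ref{defD}) gives $D_{\Lambda,\epsilon}(t) \ge \epsilon I$, hence $G_{\Lambda,\epsilon}(t)_{jj} \le \epsilon^{-1}$ uniformly in $t$ and $\Lambda$. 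Integrating over $t$ yields $\left<\phi_j^2\right>_{\alpha,\Lambda,\epsilon} \le \epsilon^{-1}$ for every $j$ and every $\Lambda$, which provides the required tightness of single-site marginals (and, by analogous computations, of every finite-dimensional marginal). A cluster point then exists by \cite[Theorem 18.12]{Georg11}.

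For the second bullet, the inclusion $\mathscr{G}^p_\epsilon \subset \mathscr{G}_\epsilon$ reduces to showing that the torus specification agrees, in the interior, with the (non-periodic) massive specification (\ref{modelamass}). Fix a finite $\Delta \subset \mathbb{Z}^d$ and take $\Lambda = \mathbb{Z}^d_N$ with $N$ large enough that $\Delta$ is separated from its periodic images by at least two edges; then no torus-wrap edge touches $\Delta$, so the conditional distribution of $\phi|_\Delta$ under $\mu^p_{\Lambda,\epsilon}$ given $\phi|_{\Lambda\setminus\Delta}$ coincides exactly with $\mu^{\psi}_{\Delta,\epsilon}$. The specification kernel $\psi \mapsto \mu^{\psi}_{\Delta,\epsilon}$ depends only on $\psi_j$ for $j \in \partial\Delta$, and is continuous there because $V$ is continuous and bounded below on bounded sets. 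These two properties allow the DLR identity to survive passage to any weak cluster point. Translation invariance on $\mathbb{Z}^d$ is inherited from the invariance of $\mu^p_{\Lambda,\epsilon}$ under torus translations, which approximate any fixed $\mathbb{Z}^d$-shift in the product topology as $N \to \infty$. This is precisely \cite[Example 5.20.3]{Georg11}.

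The main obstacle is the continuity/quasilocality step in the DLR passage: one needs dominated convergence for the finite-volume partition function $Z_{\Delta,\epsilon}(\psi)$ as the boundary $\psi$ varies, together with the fact that these partition functions are uniformly bounded away from zero on compact sets of boundary conditions. Both points are standard for nearest-neighbor systems with a continuous, non-negative potential, and the uniform moment bound $\left<\phi_j^2\right>_{\alpha,\Lambda,\epsilon} \le \epsilon^{-1}$ from the first step prevents escape of mass in the weak limit. Together these observations supply the hypotheses required by the two cited results.
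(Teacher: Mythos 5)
Your argument is correct, but it is worth noting that the paper does not prove this lemma at all: it is stated purely as a citation of \cite{Georg11}, so what you have written is the verification that the paper delegates wholesale to the reference. Your two key inputs are exactly the right ones. For non-emptiness, the crude bound $D_{\Lambda,\epsilon}(t)\ge\epsilon I$, hence $[\delta_j;G_{\Lambda,\epsilon}(t)\delta_j]\le\epsilon^{-1}$ uniformly in $t$ and $\Lambda$, gives $\left<\phi_j^2\right>_{\alpha,\Lambda,\epsilon}\le\epsilon^{-1}$ and therefore tightness in the product topology; this is simpler than (and independent of) the Green's-function machinery of Lemma \ref{lemmabd}, and it is precisely why the massive torus measures exist in all dimensions while the massless limit in Theorem \ref{existGibbs} needs $d\ge3$. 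For the inclusion $\mathscr{G}^p_\epsilon\subset\mathscr{G}_\epsilon$ and translation invariance, your observation that for $N$ large no wrap edge of $\mathbb{T}_N$ meets a fixed window $\Delta$, so the torus conditional law in $\Delta$ coincides exactly with the kernel (\ref{modelamass}), combined with the Feller property of that kernel, is the standard route. Two small points you should make explicit rather than wave at: (i) the dominated-convergence argument for continuity of $\psi\mapsto Z_{\Delta,\epsilon}(\psi)$ uses the dominating function $e^{-\epsilon\sum_{j\in\Delta}\phi_j^2}$ (available since $V\ge0$), so it genuinely requires $\epsilon>0$ and would fail for the massless specification; and (ii) the measures $\mu^p_{\Lambda,\epsilon}$ live on $\mathbb{R}^{\Lambda}$, so one must fix an extension to $\Omega$ (e.g.\ by zero outside $\Lambda$) before speaking of cluster points in $\Omega$ — harmless, since local observables are unaffected for $N$ large. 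With those caveats your sketch is a complete and correct substitute for the citation.
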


In this section, we will firstly introduce the definition of Green functions on $\mathbb{Z}^d$ and give a quadratic form bound of the Green function for $\mathbb{Z}^d$ case. Then we will give the proof of Theorem \ref{existGibbs} based on this bound.

Now we introduce the extended infinite volume measure on $\mathbb{R}^{\mathbb{Z}^d}\times\mathbb{R}^E$. Recall that the finite volume extended measure $\hat{\mu}^p_{\Lambda, \epsilon}$ is given in (\ref{defextmeasureperiodic}). For fixed $\epsilon$, let $\widehat{\mathscr{G}}^p_\epsilon$ be the set of cluster points of $\hat{\mu}_{\Lambda, \epsilon}$.  By the Remark 2.1 of \cite{BK07}, there is a one-to-one correspondence between the infinite volume measure on $\phi$'s in ${\mathscr{G}}^p_\epsilon$ and the infinite volume measure on $(\phi, t)$'s in $\hat{\mathscr{G}}^p_\epsilon$. Explicitly, by (2.7) in  \cite{BK07}, if $\mu_\epsilon \in {\mathscr{G}}^p_\epsilon$, then the corresponding $\hat{\mu}_\epsilon \in \hat{\mathscr{G}}^p_\epsilon$ is defined by (extending the consistent family of measures of the form)
\begin{eqnarray}
&&\hat{\mu}_\epsilon((\phi_x, t_{jk})_{x \in \Lambda, jk \in E(\Lambda)} \in A \times B)\nonumber\\ &:= &\int_B \prod_{jk \in E(\Lambda)}f_\alpha(e^{t_{jk}})e^{t_{jk}}dt_{jk}\nonumber\\&&\quad\quad\mathbb{E}_{\mu_\epsilon}\left(\mathbbm{1}_A\prod_{jk \in E(\Lambda)}e^{V(\phi_j-\phi_k)-(\phi_j-\phi_k)^2e^{t_{jk}}}\right)\label{defonemapping},
\end{eqnarray}
where $\Lambda$ is a torus in $\mathbb{Z}^d$.
Notice that $V$ in the exponent cancels part of the interaction in the infinite volume measure $\hat{\mu}_\epsilon$ and then the integral over $t$ in $B$ will restore it by (\ref{lpstable}). On the other hand, the $\phi$-marginal of $\hat{\mu}_\epsilon$ gives us back $\mu_\epsilon$.
Furthermore, by direct inspection of (\ref{defonemapping}), conditional on $t$, the conditional distribution of $\phi$ is a multivariate Gaussian law. Then by the property of the multivariate Gaussian law, for $v \in \mathcal{D}_0$ we have
\begin{equation}\label{gaussioncov}
\left<(\phi\cdot v)^{2n}\right>_{\mu_\epsilon} = \left<(2n-1)!!([v;G_{d,\epsilon}(t)v]^{n})\right>_{\tilde{\mu}_\epsilon}
\end{equation}

\begin{proof}[Proof of Theorem \ref{existGibbs}]
Firstly, for each fixed $\epsilon$, recall the $\mathscr{G}_\epsilon$ is the set of all cluster points of $\mu^p_{\Lambda, \beta, \epsilon}$. By Lemma \ref{lemmainfitevolumemeasure1}, $\mathscr{G}^p_\epsilon$ is non-empty and $\mathscr{G}^p_\epsilon \subset \mathscr{G}_\epsilon$.
Consider a sequence $\{\mu_n | \mu_n \in \mathscr{G}^p_{1/n} \}$. Then $\mu_n$ is an infinite volume Gibbs measure with respect to finite Gibbs measures (\ref{model}) with $\epsilon = 1/n$.  
Let $\Lambda$ be a finite subset of $\mathbb{Z}^{d}$. Given $\mathscr{F}_\Lambda$-measurable bounded function $f$ and $\mathscr{F}_{\Lambda^c}$-measurable bounded local function $g$, we have that by DLR equation
	\begin{equation}\label{DLRmass}
	\mathbb{E}_{\mu_n}(fg) =  \mathbb{E}_{\mu_n}\left(g(\psi)\mathbb{E}_{\mu_{\Lambda, 1/n}^\psi}(f)\right)
	\end{equation}
where $\mu_{\Lambda, 1/n}^\psi$ is the massive finite volume Gibbs measure with the boundary condition $\psi$ defined in (\ref{modelamass}).
Furthermore, each $\mu_n$ is translation invariant.
Now we will prove that  $\mu_n$'s converge to a massless Gibbs measure as $n$ goes infinity.
	
For each $n$, let $\hat{\mu}_n$ be the extended Gibbs measure with respect to $\mu_n$ defined in (\ref{defonemapping}). Let $\mathbb{P}_n$ and $\left<\cdot\right>_n$ be the probability and expectation with respect to $\mu_n$ respectively, and $\left<\cdot\right>_{\hat{n}}$ be the expectation with respect to $\hat{\mu}_n$. We will prove that there is a probability measure $\nu$ on $\{\Omega, \mathscr{F}_{\mathbb{Z}^d}\}$ which is a subsequence limit of $\mu_n$ in weak sense.
	
To show this, it suffices to show the tightness of  $\mu_n$. Introduce weighted $\ell^2$ norm on $\Omega$ by
	\begin{equation}\label{defl2norm}
	\norm{\phi}^2_r = \sum_{x \in \mathbb{Z}^d} \phi(x)^2e^{-2r|x|}
	\end{equation}
for $r > 0$. By Proposition 3.3 of \cite{Fun03}, for $M > 0$, $K_M = \{\phi \in \Omega | \norm{\phi}_r \le M \}$ is a compact set in $\Omega$.  Then
	\begin{eqnarray}
	\mathbb{P}_n(K_M^c)&=&\left<\mathbbm{1}\left\{\norm{\phi}_r > M\right\}\right>_n\nonumber\\
	&\le&\left<\norm{\phi}^2_r/M^2\right>_n\nonumber\\
	&=&\sum_{x \in \mathbb{Z}^d}\left<\phi(x)^2\right>_ne^{-2|x|}/M^2.\label{cheb}
	\end{eqnarray} 
By translation invariance of $\mu_n$, $\left<\phi(x)^2\right>_n = \left<\phi(0)^2\right>_n$ for all $x \in \mathbb{Z}^d$. Let $v = \delta_0$ and then by  (\ref{greenboundinf}) and (\ref{gaussioncov}),  we have
	\begin{equation}
	\left<\phi_0^2\right>_n = \left<[v,G_{d, \epsilon}(t)v]\right>_{\hat{n}} \le \sum_{jk \in E}((G_dv)_j-(G_dv)_k)^2\left<e^{-t_{jk}}\right>_{\hat{n}}.
	\end{equation}
By (\ref{tbd}), $\left<e^{-t_{jk}}\right>_{\hat{n}}$ is bounded above by constant $C$. As $G_d^{-1} = -\Delta_d = \nabla^*\nabla$, we have
	\begin{equation}\label{phitightness}
	\left<\phi_0^2\right>_n \le C\sum_{jk \in E}((G_dv)_j-(G_dv)_k)^2 = C[v, G_d v] = CG_d(0,0).
	\end{equation}
As discussed in last section, $G_d(0, 0) < \infty$ if and only if $d \ge 3$. Combine this with (\ref{cheb}), and we have
	\begin{equation}
	\mathbb{P}_n(K_M^c) \le \left(C\sum_{x \in \mathbb{Z}^d}e^{-2|x|}/M^2\right)G_d(0, 0).
	\end{equation}
As the right-hand side of the inequality is independent with $n$, this gives the tightness of $\{\mu_n\}$. Thus there exists a probability measure $\nu$ on $\{\Omega, \mathscr{F}_{\mathbb{Z}^d}\}$ which is a subsequence limit of $\mu_n$ in weak sense. 
	
Now we will prove that $\nu$ is a translation invariant infinite volume Gibbs measure. When $n$ goes to infinity,
$\mathbb{E}_{\mu_{\Lambda, 1/n}^\psi}(f)$ converges to $\mathbb{E}_{\mu_{\Lambda}^\psi}(f)$ where $\mu_{\Lambda}^\psi$ is defined in (\ref{modela}). As $f$ and $g$ are local and bounded, the left and the right hand of (\ref{DLRmass}) converges to $\mathbb{E}_{\nu}(fg)$ and $\mathbb{E}_{\nu}\left(g(\psi)\mathbb{E}_{\mu_{\Lambda}^\psi}(f)\right)$. Thus $\nu$ satisfies DLR equation (\ref{DLR}). As $\mu_n$'s is translation invariant, thus so is $\nu$.
	
Let $\mathscr{G}_\Theta \subset \mathscr{G}$ be the set of all translation invariant infinite volume Gibbs measures. Then $\mathscr{G}_\Theta$ is non-empty as $\nu \in \mathscr{G}_\Theta$. By Theorem 14.
15 in \cite{Georg11}, there exists an ergodic Gibbs measure $\mu \in \mathscr{G}$ which is the extreme point of $\mathscr{G}$ as a convex set. Then $\mu$ satisfies all the requirements in Theorem \ref{existGibbs}.
\end{proof}

\section{Scaling limit}
\subsection{Connection to random conductance model}
\hspace*{\parindent} It is well known that the mean and covariance matrix of the Gaussian system, both massless one and massive one, have random walk representations in terms of the continuous time simple random walk in $\mathbb{Z}^d$ (Proposition 3.2 and 3.8 in \cite{FT2005}). In fact, similar results also hold for our model where the corresponding stochastic process is random walk in random environment.  Here we give the definition of random walk in random environment. Previous results in random walk in random environment required in the following proof are listed in appendix \ref{sec::rwre}. Let $d$ be the natural graph distance on $\mathbb{Z}^d$, i.e. $d(x, y)$ is the minimal length of a path between $x$ and $y$. We denote by $B(x, r)$ the closed ball with center $x \in \mathbb{Z}^d$ and radius $r$, i.e. $B(x, r) := \{y \in \mathbb{Z}^d | d(x, y) \le r\}$.

A positive weight $\omega$ is a map from $E \to (0, \infty)$. This induces a conductance matrix which we also call $\omega$, that is for $x, y \in \mathbb{Z}^d$ we set $\omega(x, y) = \omega(y, x) = \omega(\{x, y\})$ if $\{x, y\} \in E$ and $\omega(x, y) = 0$ otherwise. Let us further
define measures $u^\omega$ and $v^\omega$ on $\mathbb{Z}^d$ by 
\begin{equation}\label{defuv}
u^\omega(x) := \sum_{y \sim x}\omega(x, y) \text{~~~~and~~~~} 
v^\omega(x) := \sum_{y \sim x}\frac{1}{\omega(x, y)}.
\end{equation}
Let $\Omega_\omega = (0, \infty)^E$ be the set of all weights. We will henceforth denote by $\mathbb{P}$ a probability measure on $(\Omega_\omega, \mathscr{F}_\omega) = ((0, \infty)^E, \mathscr{B}((0, \infty))^{\otimes E})$, 
and we write $\mathbb{E}$ to
denote the expectation with respect to $\mathbb{P}$.

A space shift by $z \in \mathbb{Z}^d$ is the map $\tau_z : \Omega_\omega \to \Omega_\omega$
\begin{equation}
(\tau_z\omega)(x, y) := \omega(x+z,y+z),\quad \forall \{x, y\} \in E.
\end{equation}
The set $\{\tau_x : x \in \mathbb{Z}^d\}$ together with the operation $\tau_x \circ \tau_y := \tau_{x+y}$ defines the group of space shifts.

For any fixed $\omega$ we consider a reversible continuous time Markov chain, $X = \{X_t: t \ge 0\}$, on $\mathbb{Z}^d$ with generator $\mathcal{L}^\omega_X$
acting on bounded functions $f : \mathbb{Z}^d \to \mathbb{R}$ defined by
\begin{equation}\label{generator}
(\mathcal{L}^\omega_Xf)(x) = \sum_{y \sim x}\omega(x, y)(f(y)-f(x)).
\end{equation}
This process is called the variable speed random walk (VSRW), and has an exponential time  at $x$ with mean $1/u^\omega(x)$. We denote by $P^\omega_x$ the law of the process starting at the vertex $x \in \mathbb{Z}^d$. The corresponding expectation will be denoted by $E_x^{\omega}$. For $x, y \in B$ and $t \ge 0$ let $p^\omega (t, x, y)$ be the transition densities of $X$ with respect to the counting measure (which is the reversible measure of $X$), which are also known as the heat kernels associated with $\mathcal{L}^\omega_X$, i.e.
\begin{equation}\label{heatkernel}
p^\omega (t, x, y) := P^{\omega}_x[X_t = y].
\end{equation}  

Recall that $\Omega = \mathbb{R}^{\mathbb{Z}^d}$ is the space of fields on the vertices, while $\Omega_\omega = (0, \infty)^E$ is the set of all weights on edges. To state the random walk representation for our model, we first extend the infinite volume measure to the space $\Omega\times\Omega_\omega$. Given an infinite volume Gibbs measure $\mu$ defined in (\ref{DLR}) let $\rho(\omega): \mathbb{R}^+ \mapsto \mathbb{R}^+$ be the probability density that satisfies 
\begin{equation}\label{weightdensity}
\exp\left({-\big(1+\beta x^2\big)^\alpha}\right) = \int_0^\infty \exp{\left(-\frac{1}{2}\omega x^2\right)}\rho(\omega)d\omega.
\end{equation}
For each finite $M \subset E$, consider the measure $\tilde{\mu}_M$ on $\Omega \times (\mathbb{R}^+)^M$ defined by
\begin{equation}\label{muext}
\tilde{\mu}_M(A \times B) := \int_B \prod_{e \in M}\rho(\omega_e)d\omega_e\mathbb{E}_\mu\left(\mathbbm{1}_A\prod_{jk \in M}e^{V(\phi_j-\phi_k)-\frac{1}{2}\omega_{jk}(\phi_j-\phi_k)^2}\right),
\end{equation}
where $A \in \mathscr{F}_{\mathbb{Z}^d}$ and $B \in \mathscr{B}((\mathbb{R}^+)^M)$ are Borel sets and $V(\phi_j-\phi_k) = \left(1+\beta(\phi_j-\phi_k)^2\right)^\alpha$.
\begin{remark}
	The extension in (\ref{muext}) is equivalent with that given in (\ref{defonemapping}), but we restate it in the notation of the random conductance model.
\end{remark}

(\ref{weightdensity}) ensures
that $\tilde{\mu}_M$ is a consistent family of measures; by Kolmogorov's extension
theorem, these are projections from a unique measure $\tilde{\mu}$ onto configurations $(\phi,\omega) \in \Omega
\times \Omega_\omega$. The $\phi$ marginal of $\tilde{\mu}$ is $\mu$. Using the name in \cite{BS11}, we call $\tilde{\mu}$ {\itshape{extended gradient Gibbs measure}} as it is in fact a Gibbs measure with Hamiltonian $\frac{1}{2}\omega_{jk}(\phi_j-\phi_k)^2$. 
The following lemmas from \cite{BS11} characterize the properties of $\tilde{\mu}$.
\begin{lemma}[{\cite[Lemma 3.2]{BS11}}]\label{lemmamuext}
	Let $\mu$ be a gradient Gibbs measure and let $\tilde{\mu}$ be its extension
	to $\Omega \times \Omega_\omega$. If $\mu$ is translation-invariant and ergodic, then so is $\tilde{\mu}$.
\end{lemma}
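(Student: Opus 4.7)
The plan is to unpack the construction of $\tilde\mu$ as a skew product over $\mu$ and then reduce ergodicity to a standard fact about factors of product systems.

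From (\ref{muext}) together with (\ref{weightdensity}), the $\phi$-marginal of $\tilde\mu$ is $\mu$ and, conditionally on $\phi$, the weights $\{\omega_e\}_{e\in E}$ are independent across edges with law
\[
\pi_{jk}(\phi_j-\phi_k;\,d\omega)
:= \rho(\omega)\,\exp\!\bigl(V(\phi_j-\phi_k)-\tfrac{1}{2}\omega(\phi_j-\phi_k)^2\bigr)\,d\omega.
\]
Writing $\pi(\phi;\cdot)=\prod_{e\in E}\pi_e(\nabla_e\phi;\cdot)$, one has $\tilde\mu(d\phi,d\omega)=\mu(d\phi)\,\pi(\phi;d\omega)$, and this kernel depends on $\phi$ only through the nearest-neighbor differences.

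For translation invariance, set $\Theta_z(\phi,\omega):=(\tau_z\phi,\tau_z\omega)$. A direct computation using $\nabla_{\tau_z e}(\tau_z\phi)=\nabla_e\phi$ shows the covariance $\pi(\tau_z\phi;\cdot)=\pi(\phi;\tau_z^{-1}\,\cdot)$ on cylinder events; combining this with $\mu\circ\tau_z^{-1}=\mu$ and Kolmogorov's extension theorem gives $\tilde\mu\circ\Theta_z^{-1}=\tilde\mu$.

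For ergodicity, I would realize $\tilde\mu$ as a translation-equivariant factor of an auxiliary product system. Let $\lambda$ be Lebesgue measure on $[0,1]$ and, for each edge $e$, let $F_e(x,u)$ be the inverse c.d.f. of $\pi_e(x;\cdot)$, so that $F_e(x,U)\sim\pi_e(x;\cdot)$ whenever $U\sim\lambda$. Define
\[
\Psi:\Omega\times[0,1]^E\longrightarrow\Omega\times\Omega_\omega,\qquad
\Psi(\phi,U):=\bigl(\phi,\,(F_e(\nabla_e\phi,U_e))_{e\in E}\bigr).
\]
Then $\Psi_*(\mu\otimes\lambda^{\otimes E})=\tilde\mu$ and $\Psi$ intertwines the joint shift on $\Omega\times[0,1]^E$ with $\Theta$. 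Since factors of ergodic systems are ergodic, it suffices to verify that $(\mu\otimes\lambda^{\otimes E},\{\tau_z\})$ is ergodic: this holds because $\mu$ is ergodic by hypothesis and $(\lambda^{\otimes E},\{\tau_z\})$ is a $\mathbb{Z}^d$-Bernoulli shift, hence mixing, and the product of an ergodic $\mathbb{Z}^d$-action with a mixing one is ergodic (any translation-invariant $L^2$ function integrates out the uniforms to a $\mu$-invariant function, which must be a.s.\ constant).

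The main step to be careful with is the covariance identity $\pi(\tau_z\phi;\cdot)=\pi(\phi;\tau_z^{-1}\,\cdot)$; once this is in place, both translation invariance and ergodicity follow from standard abstract arguments, so there is no genuine analytic obstacle beyond the bookkeeping of how $\tau_z$ acts simultaneously on heights, edges, and the conditional kernel.
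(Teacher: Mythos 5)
Your argument is correct, but note that the paper itself supplies no proof of this lemma: it is imported verbatim as \cite[Lemma 3.2]{BS11}, where the ergodicity part is proved by a direct approximation argument (take a shift-invariant event $A$, approximate it by cylinder events in finitely many $\omega_e$'s, and use the conditional product structure of $\tilde{\mu}(\cdot\,|\,\phi)$ over edges together with a large translation to show $\tilde{\mu}(A\,|\,\phi)\in\{0,1\}$, then invoke ergodicity of $\mu$). Your route --- realizing $\tilde{\mu}$ via the inverse-c.d.f.\ coupling as a translation-equivariant factor of $\mu\otimes\lambda^{\otimes E}$, noting that $\lambda^{\otimes E}\cong([0,1]^d)^{\otimes\mathbb{Z}^d}$ is a Bernoulli $\mathbb{Z}^d$-shift, and then citing ``ergodic $\times$ weakly mixing is ergodic'' plus ``factors of ergodic systems are ergodic'' --- is a legitimate and arguably cleaner packaging of the same underlying mechanism; it buys you a one-line reduction to named theorems at the cost of introducing the auxiliary uniform variables. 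Two small points to tighten: (i) your parenthetical justification of the product-ergodicity step is incomplete as stated, since integrating out the uniforms only shows that the $\phi$-measurable projection of an invariant function is constant; to conclude the function itself is constant you genuinely need the (weak) mixing of the Bernoulli factor, so you should cite the standard theorem rather than the one-liner; (ii) the intertwining $\Psi\circ(\tau_z\times\tau_z)=\Theta_z\circ\Psi$ uses that the edge kernels $\pi_e$ are the \emph{same} function of the gradient for every edge and orientation, which holds here only because $\rho$ and $V$ are edge-independent and the density depends on $\phi_j-\phi_k$ only through its square --- worth saying explicitly.
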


\begin{remark}
	With a simple change of variables, we see that if $\rho$ satisfies (\ref{weightdensity}) and $f_\alpha$ is given by (\ref{lpstable}), then
	\begin{equation}
	\rho(\omega) = \frac{1}{2\beta} f_\alpha(\frac{\omega}{2\beta})e^{-\frac{\omega}{2\beta}},
	\end{equation}
	and that $e^{t_{jk}}$ has the same distribution with $\omega_{jk}/2\beta$ in (\ref{muext}). We introduce the auxiliary field $t_{jk}$ in previous sections to make the effective action convex by lemma \ref{lemmalogcon} and properties of $\alpha$-stable density. Here we introduced a new auxiliary field $\omega_{jk}$ to connect our model to random walk in random environment.
	
\end{remark}

Combining this remark with Proposition \ref{propbda}, we have following corollary.

\begin{col}\label{pbound}
	For any $p \in \mathbb{R}$ and $e \in E$, $\mathbb{E}_{\tilde{\mu}}(\omega^p(e))$ exists.
\end{col}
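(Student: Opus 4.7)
The plan is to reduce the corollary directly to Proposition \ref{propbda} via the change of variables $\omega = 2\beta e^t$ pointed out in the remark, and then to pass the uniform finite-volume bounds through to the infinite-volume extended measure by weak convergence.

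First, by the explicit form of $\rho$ given in the remark, $\rho(\omega)d\omega$ is precisely the pushforward of $f_\alpha(e^t)e^t\,dt$ times $e^{-e^t}$ under $t \mapsto \omega = 2\beta e^t$ (up to the factors that get absorbed into the Hamiltonian $\tfrac12\omega(\phi_j-\phi_k)^2$, which equals $e^{t}\beta(\phi_j-\phi_k)^2$). Consequently, under $\tilde{\mu}$ the edge weight $\omega_e$ has the same distribution as $2\beta e^{t_e}$ under the corresponding extended measure $\hat{\mu}$ on $\Omega \times \mathbb{R}^E$, namely the one whose $\phi$-marginal is $\mu$ and whose $(\phi,t)$ joint law is defined by the finite-dimensional formula (\ref{defonemapping}). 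Therefore
\[
\mathbb{E}_{\tilde{\mu}}\bigl(\omega^{p}(e)\bigr)
\;=\; (2\beta)^{p}\,\mathbb{E}_{\hat{\mu}}\!\bigl(e^{p\,t_e}\bigr),
\]
so it suffices to bound the right-hand side for each real $p$.

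Next I would invoke the construction in the proof of Theorem \ref{existGibbs}: the translation-invariant, ergodic Gibbs measure $\mu$ arose as a subsequential weak limit of the finite-volume, periodic, massive measures $\mu^p_{\Lambda_n,1/n}$, and by the one-to-one correspondence of Remark 2.1 in \cite{BK07} (used at the start of this section) the associated extended measures $\hat{\mu}^p_{\Lambda_n,1/n}$ converge weakly along the same subsequence to $\hat{\mu}$. Since $t \mapsto e^{p t_e}$ is continuous and nonnegative, the Portmanteau theorem (Fatou direction) gives
\[
\mathbb{E}_{\hat{\mu}}\!\bigl(e^{p\,t_e}\bigr)
\;\le\; \liminf_{n\to\infty}\,\bigl\langle e^{p\,t_e}\bigr\rangle_{\alpha,\Lambda_n,1/n}.
\]
Finally, Proposition \ref{propbda} states exactly that $\langle e^{\lambda t_{jk}}\rangle_{\alpha,\Lambda,\epsilon}\le C(\lambda,\alpha)$ uniformly in $\Lambda$ and $\epsilon$, so applying it with $\lambda=p$ yields
\[
\mathbb{E}_{\tilde{\mu}}\bigl(\omega^{p}(e)\bigr)\;\le\;(2\beta)^{p}\,C(p,\alpha)\;<\;\infty,
\]
which proves the claim.

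The only non-routine step is justifying that the subsequence limit of the extended finite-volume measures is genuinely the extended measure associated with $\mu$; this is essentially bookkeeping, relying on the consistency formula (\ref{defonemapping}) and Kolmogorov's extension theorem already invoked in constructing $\tilde{\mu}$ from its finite-dimensional marginals. Everything else is a direct translation of Proposition \ref{propbda} through the density identity $\rho(\omega)=\tfrac{1}{2\beta}f_\alpha(\tfrac{\omega}{2\beta})e^{-\omega/(2\beta)}$ and a one-line application of weak convergence of positive continuous observables.
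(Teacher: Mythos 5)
Your proposal is correct and follows exactly the route the paper intends: the paper's entire justification is the single sentence ``Combining this remark with Proposition \ref{propbda}'', i.e.\ the distributional identity $\omega_e \stackrel{d}{=} 2\beta e^{t_e}$ from the preceding remark together with the uniform bound $\left<e^{\lambda t_{jk}}\right>_{\alpha,\Lambda,\epsilon}\le C(\lambda,\alpha)$, and you have merely made explicit the weak-convergence/Fatou step needed to transfer that finite-volume bound to $\tilde{\mu}$. The one caveat --- which your write-up shares with the paper itself --- is that this argument literally covers only Gibbs measures arising as subsequential limits of the periodic finite-volume measures to which Proposition \ref{propbda} applies, whereas the corollary is later invoked for an arbitrary translation-invariant, ergodic Gibbs measure.
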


The next proposition shows that conditional on the weight configuration, the extended gradient Gibbs measure has a random walk representation with respect to random walk with weight $\omega$ defined in (\ref{generator}). To state the proposition, define the $\sigma$-field $\mathscr{E} := \sigma(\{\omega_b : b \in E \})$.
\begin{prop}\label{Propconditionexp}
	For $d \ge 3$, let $\mu$ be a translation-invariant, ergodic gradient Gibbs measure defined in (\ref{DLR}) and $\tilde{\mu}$ is the extension of $\mu$ to $\Omega \times \Omega_\omega$. For $\tilde\mu$-a.e. $\omega$, the conditional law $\tilde{\mu}( \cdot|\mathscr{E})(\omega)$, regarded as a measure on $\Omega$, is Gaussian with constant mean
	and covariance given by $(−\mathcal{L}^\omega_X)^{-1}$. Explicitly, for each $f :\mathbb{Z}^d \to \mathbb{R}$ with finite support and $\sum_x f(x) = 0$,
	\begin{eqnarray}
	\mathbb{E}_{\tilde{\mu}}\left(\left.\sum_xf(x)\phi_x\right|\mathscr{E}\right)(\omega) &=& 0;\\
	\text{Var}_{\tilde{\mu}}\left(\left.\sum_xf(x)\phi_x\right|\mathscr{E}\right)(\omega) &=& \sum_xf(x)\left((−\mathcal{L}^\omega_X)^{-1}f\right)(x).
	\end{eqnarray}
\end{prop}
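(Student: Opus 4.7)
My plan is to identify the conditional law at finite volume from the extension formula and then take an infinite volume limit using the heat kernel and invariance principle results summarized in the appendix. Fix a finite set $\Lambda \subset \mathbb{Z}^d$ containing the support of $f$ and condition on both $\mathscr{E}$ and $\mathscr{F}_{\Lambda^c}$. Combining the DLR equation for $\mu$ with the extension formula (\ref{muext}) applied to $M = E(\Lambda)$, the potential factors $e^{-V(\phi_j-\phi_k)}$ from the specification $\mu^\psi_\Lambda$ cancel against the factors $e^{V(\phi_j-\phi_k)}$ appearing in the Radon--Nikodym derivative defining $\tilde{\mu}$, leaving a Gaussian conditional density for $\phi|_\Lambda$ with quadratic form $\tfrac{1}{2}\sum_{jk\in E(\Lambda)}\omega_{jk}\bigl((\phi\vee\psi)_j-(\phi\vee\psi)_k\bigr)^2$. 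In particular, the conditional law is genuinely Gaussian at every finite volume with every boundary condition.

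For this finite-volume Gaussian I would invoke the classical random walk representation (see Propositions 3.2 and 3.8 of \cite{FT2005}, transplanted to variable conductances $\omega$): the conditional mean of $\phi_x$ is the $\mathcal{L}^\omega$-harmonic extension of $\psi$ to $\Lambda$, and the conditional covariance is the Dirichlet Green's function $(-\mathcal{L}^\omega_\Lambda)^{-1}$. Next I would let $\Lambda \uparrow \mathbb{Z}^d$. By Corollary \ref{pbound}, all positive and negative moments of $\omega_e$ under $\tilde{\mu}$ are finite, so $\tilde{\mu}$-a.e.\ realization of $\omega$ lies in the regime where the heat kernel estimates of \cite{ADS15H} and the invariance principle of \cite{ADS14} apply. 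For $d \ge 3$ the VSRW is then transient and $(-\mathcal{L}^\omega_\Lambda)^{-1}f \to (-\mathcal{L}^\omega)^{-1}f$ for any finitely supported $f$ with $\sum_x f(x) = 0$; simultaneously, the Liouville property for the random conductance model (a consequence of the invariance principle) forces the limiting harmonic extension to be constant in $x$ for $\tilde{\mu}$-a.e.\ $\omega$, so the conditional mean contributes $(\sum_x f(x))\cdot c(\omega) = 0$.

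The main obstacle is not the computation of moments but the preservation of Gaussianity under the infinite-volume limit and the identification of the limit as a conditional distribution of $\tilde{\mu}$. I would handle this by working with characteristic functions: at every finite $\Lambda$ and every $\psi$, the conditional characteristic function of $\sum_x f(x)\phi_x$ given $\omega$ is exactly $\exp\bigl(i\bar{m}_\Lambda - \tfrac{1}{2}\langle f,(-\mathcal{L}^\omega_\Lambda)^{-1}f\rangle\bigr)$, where $\bar{m}_\Lambda$ is the boundary contribution; the tightness used in the proof of Theorem \ref{existGibbs} together with the moment bounds of Corollary \ref{pbound} lets me pass to the limit inside the expectation and conclude that the limiting conditional characteristic function equals $\exp\bigl(-\tfrac{1}{2}\langle f,(-\mathcal{L}^\omega)^{-1}f\rangle\bigr)$, which uniquely determines a centered Gaussian law with the stated covariance. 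Translation invariance and ergodicity of $\tilde{\mu}$ from Lemma \ref{lemmamuext} are precisely what ensure that the constant-in-$x$ mean is measurable with respect to $\mathscr{E}$ and drops out for zero-sum $f$, completing the proof.
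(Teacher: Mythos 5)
Your identification of the conditional law as Gaussian with covariance $(-\mathcal{L}^\omega_X)^{-1}$ is fine and agrees with the paper, which reads this off directly from (\ref{muext}); routing it through finite volumes, the DLR equation and characteristic functions is heavier than necessary but not wrong in principle. The genuine gap is in your treatment of the conditional mean. You assert that ``the Liouville property for the random conductance model (a consequence of the invariance principle)'' forces the limiting harmonic extension to be constant in $x$. The Liouville theorems that come out of the invariance principle apply to bounded or sublinearly growing harmonic functions, and the function $x \mapsto \mathbb{E}_{\tilde{\mu}}(\phi_x\mid\mathscr{E})(\omega)$ is neither known to be bounded nor known a priori to be sublinear; moreover, in your finite-volume picture the harmonic extension of the random boundary data $\psi$ has no reason to stabilize as $\Lambda \uparrow \mathbb{Z}^d$, which is precisely why the paper works directly at infinite volume.

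What actually closes the argument is the rigidity statement for shift-covariant, harmonic, zero-mean, square-integrable vector fields --- Lemma \ref{lemmazerofunction}, proved via the decomposition $L^2_{vec} = L^2_\nabla \oplus (L^2_\nabla)^\perp$ in Appendix \ref{sec::potential}, not via heat kernels or the QFCLT (those enter only later, in the scaling-limit section). To apply it to $u(\omega,x) := \mathbb{E}_{\tilde{\mu}}(\phi_x-\phi_0\mid\mathscr{E})(\omega)$ one must verify four hypotheses: harmonicity (a one-site change-of-variables identity using the Gibbs property of $\tilde{\mu}$), shift covariance (translation invariance of $\tilde{\mu}$), componentwise square integrability (Theorem \ref{mymain1} plus Jensen for conditional expectations), and square integrability as a vector field weighted by $\omega_{0x}$ (Corollary \ref{pbound} plus Cauchy--Schwarz). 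Your sketch supplies none of these verifications, and the last two are exactly where the moment bounds from the first half of the paper are consumed; without them the ``constant mean'' conclusion, and hence the vanishing of $\mathbb{E}_{\tilde{\mu}}(\sum_x f(x)\phi_x\mid\mathscr{E})$ for zero-sum $f$, is unsupported.
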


To prove Proposition \ref{Propconditionexp}, we first introduce the shift covariance property.   For a function $g: \Omega_\omega\times\mathbb{Z}^d \to \mathbb{R}$, we say $g$ satisfies {\itshape shift covariance property} if
\begin{equation}\label{shiftcov1}
g(\omega, x+b) - g(\omega, x) = g(\tau_x\omega, b),
\end{equation}
where  $x \in \mathbb{Z}^d$ and $b$ is a coordinate unit vector in $\mathbb{Z}^d$, and 
\begin{equation}\label{shiftcov2}
g(\omega, 0) = 0;
\end{equation}
we say $g$ is harmonic if
\begin{equation}\label{harm}
\mathcal{L}^\omega_Xg(\omega, \cdot) = 0, ~~~\mathbb{P}-a.s.~ \omega.
\end{equation}
The next lemma, which is similar to Lemma 3.3 in \cite{BS11}, shows that  harmonic, shift-covariant functions are uniquely determined by their expectation with respect to ergodic measures on the conductances.

\begin{lemma}\label{lemmazerofunction}
	Let $\nu$ be a translation-invariant, ergodic probability measure on configurations $\omega = (\omega_b) \in \Omega_\omega$. Let $g: \Omega_\omega^E\times\mathbb{Z}^d \to \mathbb{R}$ be a measurable function which is:
	\begin{enumerate}
		\item harmonic in the sense of (\ref{harm}), $\nu$-a.s.;
		\item shift-covariant in the sense of (\ref{shiftcov1}) and (\ref{shiftcov2}), $\nu$-a.s.;
		\item square integrable for each component in the sense that $\mathbb{E}_{\tilde{\nu}} |g(\omega,x)|^2 < \infty$ for all $x$ with $|x| = 1$;
		\item square integrable as a vector field $\mathbb{E}_{\tilde{\nu}} \left(\sum_{|x| = 1}\omega_{0x}|g(\omega,x)|^2\right) < \infty$.
	\end{enumerate}
	If $\mathbb{E}_\nu(g(\cdot,x)) = 0$ for all $x$ with $|x| = 1$, then $g(\cdot,x) = 0$ a.s. for all $x \in \mathbb{Z}^d$.
\end{lemma}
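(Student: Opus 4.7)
The plan is to follow the strategy of Lemma 3.3 in \cite{BS11}, combining the Hilbert-space structure of $L^2$ cocycles on the random conductance model with the classical fact that the space of $L^2$-harmonic cocycles is finite-dimensional.

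First, I would extend the zero-mean hypothesis from unit vectors to all of $\mathbb{Z}^d$. Using (\ref{shiftcov1}) together with translation invariance of $\nu$, for every unit vector $b$ and every $x$,
\begin{equation*}
\mathbb{E}_\nu[g(\omega, x+b)] - \mathbb{E}_\nu[g(\omega, x)] = \mathbb{E}_\nu[g(\tau_x\omega, b)] = \mathbb{E}_\nu[g(\omega, b)] = 0,
\end{equation*}
so iterating along a path from $0$ to $x$ and applying (\ref{shiftcov2}) gives $\mathbb{E}_\nu[g(\cdot, x)] = 0$ for every $x$.

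Next, I would introduce the Hilbert space $\mathcal{G}$ of square-integrable cocycles equipped with the Dirichlet inner product
\begin{equation*}
\langle h_1, h_2\rangle_\mathcal{G} := \mathbb{E}_\nu\!\Bigl[\sum_{|b|=1}\omega_{0b}\, h_1(\omega, b)\, h_2(\omega, b)\Bigr];
\end{equation*}
condition 4 of the lemma places $g$ in $\mathcal{G}$. For any bounded local $F : \Omega_\omega \to \mathbb{R}$ the coboundary $(\nabla F)(\omega, x) := F(\tau_x\omega) - F(\omega)$ sits in $\mathcal{G}$, and a change-of-variables computation exploiting translation invariance of $\nu$, the symmetry $\omega(x,y) = \omega(y,x)$, and the shift-covariance (\ref{shiftcov1})--(\ref{shiftcov2}) reduces
\begin{equation*}
\langle g, \nabla F\rangle_\mathcal{G} \;=\; -2\,\mathbb{E}_\nu\!\Bigl[F(\omega)\sum_{|b|=1}\omega_{0b}\,g(\omega, b)\Bigr],
\end{equation*}
which vanishes by the harmonicity (\ref{harm}) of $g$ at the origin. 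Hence $g$ is orthogonal in $\mathcal{G}$ to the closure $\mathcal{G}_{\mathrm{pot}}$ of the span of coboundaries, i.e., $g$ lies in the solenoidal subspace.

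Third, to pass from this orthogonality together with the zero-mean hypothesis to $g \equiv 0$, I would invoke the classical structure theorem for $L^2$-cocycles on random conductance models (going back to Kozlov and Kipnis--Varadhan, see the survey \cite{BS11}): under ergodicity of $\nu$ and sufficient moment control, the orthogonal complement $\mathcal{G}_{\mathrm{pot}}^{\perp}$ of harmonic $L^2$-cocycles is finite-dimensional and is parameterized injectively by the mean vector $v \in \mathbb{R}^d$ defined by $v_i := \mathbb{E}_\nu[h(\cdot, e_i)]$. The hypothesis $\mathbb{E}_\nu[g(\cdot, e_i)] = 0$ for every $i$ therefore forces $g = 0$ in $\mathcal{G}$, whence $g(\omega, b) = 0$ for $\nu$-a.e.\ $\omega$ and every unit $b$, and path-additivity via (\ref{shiftcov1}) propagates this to $g(\omega, x) = 0$ for every $x \in \mathbb{Z}^d$. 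The main obstacle is this third step: verifying that the structure theorem applies here. Its use is justified in our setting because $\nu = \tilde{\mu}$ restricted to $\mathscr{E}$ is translation-invariant and ergodic by Lemma \ref{lemmamuext}, and because Corollary \ref{pbound} supplies moment bounds on $\omega^{\pm p}$ for every $p$, which cover the integrability assumptions required by the homogenization framework.
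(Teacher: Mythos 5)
Your first two steps are sound and coincide with the paper's route: extending the zero-mean property by shift-covariance is routine, and your computation showing $\langle g,\nabla F\rangle_{\mathcal G}=-2\,\mathbb{E}_\nu[F(\omega)\sum_{|b|=1}\omega_{0b}g(\omega,b)]=0$ is exactly the content of the paper's Lemma 5.3 (harmonicity is equivalent to orthogonality to the closure $L^2_\nabla$ of coboundaries in the weighted inner product). The problem is your third step. The ``classical structure theorem'' you invoke --- that the space of harmonic shift-covariant $L^2$-cocycles is parameterized injectively by the mean vector --- is, on the class of fields under consideration, \emph{literally equivalent} to the lemma being proven: injectivity of the mean map on that space says precisely that a harmonic, shift-covariant, square-integrable, zero-mean cocycle vanishes. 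So as written the argument is circular unless that theorem is an independently established result in the present setting, and it is not: the standard versions (Kozlov, Kipnis--Varadhan, and \cite[Lemma 3.3]{BS11}) are proved under uniform ellipticity of the conductances, where the weighted and unweighted $L^2$ norms are equivalent. Here the conductances have only all finite positive and negative moments, which is exactly why the lemma carries the two \emph{separate} integrability hypotheses 3 and 4.

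What the paper actually does at this point is prove an exactness statement, Theorem \ref{thmgradient}: a shift-covariant field satisfying the cycle condition, with zero mean, square-integrable componentwise \emph{and} as a weighted vector field, lies in $L^2_\nabla$. This is the adaptation of \cite[Theorem 5.4]{BS11}, where the single place uniform ellipticity enters (the equivalence of conditions 3 and 4) is replaced by assuming both. Granting that, the conclusion is immediate: $g\in L^2_\nabla$ by Theorem \ref{thmgradient} and $g\in (L^2_\nabla)^\perp$ by your step 2, so $g=0$. To repair your proof, replace the appeal to the structure theorem by this exactness statement and address why its proof survives without uniform ellipticity; note also that the lemma concerns a general translation-invariant ergodic $\nu$ with hypotheses placed directly on $g$, so Lemma \ref{lemmamuext} and Corollary \ref{pbound} belong to the \emph{application} of the lemma (verifying its hypotheses for $\tilde\mu$), not to its proof.
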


Compared with \cite[Lemma 3.3]{BS11}, we make two changes in the assumption: remove the condition of uniformly elliptic for conductance, namely $\mu(\epsilon \le \omega_b \le 1/\epsilon) = 1$ for some $\epsilon > 0$; add condition 4. In fact, in the proof of \cite[Lemma 3.3]{BS11}, the condition of  uniformly elliptic for conductance is used only to prove a equivalence of condition 3 and 4.  We defer the proof, and further discussion of the consequences of shift covariance and harmonicity, to Appendix \ref{sec::potential}.

Now we will give the proof of Proposition \ref{Propconditionexp}. The proof here follows the strategy of the proof of Lemma 3.4 in \cite{BS11}.
\begin{proof}[Proof of Proposition \ref{Propconditionexp}]
	The fact that the conditional measure is a multivariate Gaussian law with covariance $(\mathcal{L}^\omega_X)^{-1}$
	can be checked by direct inspection of (\ref{muext}).
	The only nontrivial task is to identify the mean. Define $u: \Omega_\omega\times\mathbb{Z}^d \to \mathbb{R}$ by
	\begin{equation}
	u(\omega, x) = 	\mathbb{E}_{\tilde{\mu}}(\phi_x-\phi_0|\mathscr{E})(\omega). 
	\end{equation}
	We claim that $u$ satisfies all conditions in Lemma \ref{lemmazerofunction}. First, to prove $u$ is harmonic in the sense of (\ref{harm}), consider
	\begin{equation}
	\mathcal{L}^\omega_Xu(\omega, x) = \mathbb{E}_{\tilde{\mu}}\left(\left.\sum\limits_{y: |y-x|=1}\omega_{xy}(\phi_y-\phi_x)\right|\mathscr{E}\right)(\omega).
	\end{equation}
	Using the fact that the $\tilde{\mu}$ is Gibbs, conditional on $\sigma(\phi_y;y\not=x)$ the conditional measure $\mu_{\left\{x\right\}}$ is Gaussian with the explicit form
	\begin{equation}
	\mu_{\left\{x\right\}}(d\phi_x)=\frac{1}{Z}\exp \left\{-\frac{1}{2} \phi_x^2 \sum\limits_{y: |y-x|=1}\omega_{xy}+\phi_x \sum\limits_{y: |y-x|=1}\omega_{xy}\phi_y\right\}
	\end{equation}
	where $Z$ is an appropriate normalization constant. By change of variables $\phi_x \to \phi_x + a$ and differentiating with respect to $a$ at $a = 0$, we have the mean of $\phi_x\sum\limits_{y: |y-x|=1}w_{xy}$ under $\mu_{\left\{x\right\}}$ is exactly
	$\sum\limits_{y: |y-x|=1}w_{xy}\phi_y$, proving that
	$\mathcal{L}^\omega_Xu(\omega, x) = 0 $.
	
	Next, we observe that the translation invariance of $\tilde{\mu}$ implies that
	\begin{eqnarray}
	u(\tau_x\omega, b)-u(\tau_x\omega,0)&=& \mathbb{E}_{\tilde{\mu}}(\phi_b-\phi_0|\mathscr{E})(\tau_x\omega)\nonumber\\&=&\mathbb{E}_{\tilde{\mu}}(\phi_{x+b}-\phi_x|\mathscr{E})(\omega)\nonumber\\
	&=& u(\omega, x+b)-u(\omega,x)
	\end{eqnarray}
	and so $u$ is shift-covariant, as defined in (\ref{shiftcov1}) and (\ref{shiftcov2}).
	
	Thirdly, by Theorem \ref{mymain1}, for $p = 1$ and $2$
	\begin{equation}
	\mathbb{E}_{\tilde{\mu}}\left(\left(\phi_x-\phi_0\right)^{2p}\right) < \infty.
	\end{equation}
	Then
	\begin{eqnarray}
	\mathbb{E}_{\tilde{\mu}} |u(\omega,x)|^2& =& \mathbb{E}_{\tilde{\mu}}\left(\mathbb{E}_{\tilde{\mu}}(\phi_x-\phi_0|\mathscr{E})\right)^2\nonumber\\ 
	&\le& \mathbb{E}_{\tilde{\mu}}\left(\mathbb{E}_{\tilde{\mu}}\left((\phi_x-\phi_0)^2|\mathscr{E}\right)\right)\nonumber\\
	& =& \mathbb{E}_{\tilde{\mu}}\left((\phi_x-\phi_0)^2\right)\nonumber <\infty, 
	\end{eqnarray}
	where the second line is given by Jensen inequality with respect to conditional expectation. This gives the square integrability for each component. As for the square integrability as a vector field, by Corollary \ref{pbound}, we have $\mathbb{E}_{\tilde{\mu}}(\omega_{0x}^2) < \infty$. Then
	\begin{eqnarray}
	\mathbb{E}_{\tilde{\mu}} \left(\sum_{|x| = 1}\omega_{0x}|u(\omega,x)|^2\right) & = & \sum_{|x| = 1}\mathbb{E}_{\tilde{\mu}}\omega_{0x}|u(\omega,x)|^2\nonumber \\& \le&  \sum_{|x| = 1}\sqrt{\mathbb{E}_{\tilde{\mu}}\omega^2_{0x}\mathbb{E}_{\tilde{\mu}}|u(\omega,x)|^4}\nonumber\\
	& =&  \sum_{|x| = 1}\sqrt{\mathbb{E}_{\tilde{\mu}}\omega^2_{0x}\mathbb{E}_{\tilde{\mu}}\left(\mathbb{E}_{\tilde{\mu}}(\phi_x-\phi_0|\mathscr{E})\right)^4}\nonumber\\
	& \le&  \sum_{|x| = 1}\sqrt{\mathbb{E}_{\tilde{\mu}}\omega^2_{0x}\mathbb{E}_{\tilde{\mu}}\left(\mathbb{E}_{\tilde{\mu}}((\phi_x-\phi_0)^4|\mathscr{E})\right)}\nonumber\\
	& =&  \sum_{|x| = 1}\sqrt{\mathbb{E}_{\tilde{\mu}}\omega^2_{0x}\mathbb{E}_{\tilde{\mu}}\left((\phi_x-\phi_0)^4\right)}<\infty.
	\end{eqnarray}
	This gives the square integrability as a vector field.
	
	Finally, the definition of $u$ and the fact that $\tilde{\mu}$ is translation invariant imply that
	\begin{equation}
	\mathbb{E}_{\tilde{\mu}}(u(\cdot,x)) = \mathbb{E}_{\tilde{\mu}}(\phi_x-\phi_0) = 0.
	\end{equation}
	As $u$ obeys all conditions of Lemma \ref{lemmazerofunction}, we have 
	$\mathbb{E}_{\tilde{\mu}}(\phi_x-\phi_0|\mathscr{E})(\omega)  = 0$, $\tilde{\mu}-a.s$, namely
	\begin{equation}
	\mathbb{E}_{\tilde{\mu}}(\phi_x|\mathscr{E})(\omega)  = \mathbb{E}_{\tilde{\mu}}(\phi_0|\mathscr{E})(\omega), \quad\tilde{\mu}-a.s
	\end{equation}
	
	Furthermore, consider $h(\omega) = \mathbb{E}_{\tilde{\mu}}(\phi_0|\mathscr{E})(\omega)$ as a function of $\omega$. Then for $x \in \mathbb{Z}^d$
	\begin{equation}
	h(\tau_x \omega) = \mathbb{E}_{\tilde{\mu}}(\phi_0|\mathscr{E})(\tau_x\omega) = \mathbb{E}_{\tilde{\mu}}(\phi_x|\mathscr{E})(\omega) = h(\omega).
	\end{equation}
	By ergodicity of $\tilde{\mu}$, $f(\omega)$ is constant and thus
	\begin{equation}
	\mathbb{E}_{\tilde{\mu}}(\phi_0|\mathscr{E})(\omega) = \mathbb{E}_{\tilde{\mu}}\left(\mathbb{E}_{\tilde{\mu}}(\phi_0|\mathscr{E})(\omega)\right) = \mathbb{E}_{\tilde{\mu}}(\phi_0).
	\end{equation} 
\end{proof}

\subsection{Regularity estimates}
\hspace*{\parindent}Now we will give the proof of Lemma \ref{reglemma}. which implies that the random operator $\phi$ given in (\ref{defphioperator}) is $L^2$ continuous. With Lemma \ref{reglemma}, we need only to work with smooth and compactly supported test function.
\begin{proof}[Proof of Lemma \ref{reglemma}]
	For any $f \in \mathcal{H}_0$, define $v_f \in \mathbb{R}^{\mathbb{Z}^d}$ by
	\begin{equation}
	v_f(x) = \int dy f(y)\mathbbm{1}_{\lfloor y\rfloor = x}.
	\end{equation}
	Then if $v_f \cdot \phi = \sum v_f(x)\phi(x)$ denotes the usual scalar product, $v_f \cdot \phi = \phi(f)$. The sum over $x$ is convergent and $v_f$ is compact supported and orthogonal to constants as $f \in\mathcal{H}_0$
	
	Let $\Delta_d$ be the lattice Laplacian defined in (\ref{defgradientsquare}). Then by Theorem \ref{mymain1}, $\norm{\phi(f)}_{L^2(\mu)} = \mathbb{E}_\mu((v_f \cdot \phi)^2) \le C_1 [v_f, -\Delta_d^{-1} v_f]$. By (4.4) in \cite{BS11}, $[v_f, -\Delta_d^{-1} v_f] \le C_2 \norm{f}^2_\mathcal{H}$. These two inequalities give us (\ref{reginequal}).
\end{proof}

For convenience of notation, whenever $\mathcal{R}$ is an operator on $\ell^2(\mathbb{Z}^d)$, we will extend it to a bounded operator on $L^2(\mathbb{R}^d)$ via formula
\begin{equation}\label{exttocont}
(f, \mathcal{R}f):=\int dx dy f(x)f(y)\mathcal{R}(\floor*{x}, \floor*{y}),
\end{equation}
where $\mathcal{R}(x, y)$ is the kernel of $\mathcal{R}$ in the canonical basis in $\ell^2(\mathbb{Z}^d)$.
Next we will prove that the tail of $(f, e^{t\mathcal{L}_X^\omega}f)$ is integrable. 

\begin{lemma}\label{lemmatailbd}
	Let $\mu$ be a translation-invariant, ergodic gradient Gibbs measure
	 and $\tilde{\mu}$ is the extension of $\mu$ to $\mathbb{R}^{\mathbb{Z}^d} \times \mathbb{R}^E$ defined in (\ref{muext}). Then for $f \in C_0^\infty(\mathbb{R}^d)$ there exist $N(\omega, f)$ for $\tilde{\mu}-a.s.~\omega$ such that if $t > N(\omega, f)$
	\begin{equation}
	(f, e^{t\mathcal{L}_X^\omega}f)\le C \norm{f}^2_\infty\lambda(\text{supp}~f)^2 \frac{1}{t^{d/2}}
	\end{equation}
	where $\lambda(A)$ is the Lebesgue measure of set $A$.
\end{lemma}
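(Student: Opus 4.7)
My plan is to reduce the quadratic form $(f, e^{t\mathcal{L}_X^\omega}f)$ to a pointwise estimate on the heat kernel $p^\omega(t, x, y)$, and then to integrate that on-diagonal bound over the compact support of $f$.

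First I would use the extension convention (\ref{exttocont}) together with the definition (\ref{heatkernel}) to write
\begin{equation*}
(f, e^{t\mathcal{L}_X^\omega}f) = \int\int dx\,dy\, f(x)f(y)\, p^\omega(t, \lfloor x\rfloor, \lfloor y\rfloor).
\end{equation*}
This representation has the advantage that a pointwise upper bound on $p^\omega$ passes through the double integral for free once combined with $|f(x)f(y)| \le \norm{f}_\infty^2$ and the observation that only $x, y \in \text{supp}\,f$ contribute.

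Next I would invoke the quenched on-diagonal heat kernel upper bound for random conductance models established by Andres, Deuschel and Slowik \cite{ADS15H}. Under finiteness of $\mathbb{E}_{\tilde{\mu}}[\omega_e^p]$ and $\mathbb{E}_{\tilde{\mu}}[\omega_e^{-q}]$ for $p, q$ sufficiently large---supplied here by Corollary \ref{pbound}, together with the translation invariance and ergodicity of $\tilde{\mu}$ from Lemma \ref{lemmamuext}---this result yields, for $\tilde{\mu}$-a.e.\ $\omega$ and each vertex $x_0 \in \mathbb{Z}^d$, a random threshold $N_0(\omega, x_0)$ such that
\begin{equation*}
\sup_{y\in\mathbb{Z}^d} p^\omega(t, x_0, y) \le \frac{C}{t^{d/2}}, \qquad t \ge N_0(\omega, x_0).
\end{equation*}
Since $f$ has compact support, the set $S := \{\lfloor x\rfloor : x \in \text{supp}\,f\} \subset \mathbb{Z}^d$ is finite; taking $N(\omega, f) := \max_{x_0 \in S} N_0(\omega, x_0)$ gives a $\tilde{\mu}$-a.s.\ finite threshold beyond which the bound $p^\omega(t, \lfloor x\rfloor, \lfloor y\rfloor) \le C t^{-d/2}$ holds uniformly over $x, y \in \text{supp}\,f$. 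Substituting this into the integral representation above yields the claimed bound $C\norm{f}_\infty^2\, \lambda(\text{supp}\,f)^2\, t^{-d/2}$.

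The main obstacle I anticipate is matching the precise form of the heat kernel estimate we extract from \cite{ADS15H} with our setting: one must verify that Corollary \ref{pbound} supplies the moment conditions in the strength actually demanded by \cite{ADS15H}, and that the random threshold in their statement can be chosen so as to depend measurably on finitely many base points, so that the maximum over the finite set $S$ remains $\tilde{\mu}$-a.s.\ finite. Everything else---the integral representation and the crude integration over the support of $f$---is routine once this pointwise on-diagonal bound is in hand.
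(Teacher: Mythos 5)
Your proposal is correct and follows essentially the same route as the paper: the paper likewise writes $(f, e^{t\mathcal{L}_X^\omega}f)$ via (\ref{exttocont}) as a double sum/integral of $f(x+z)f(y+z')p^\omega(t,x,y)$, applies the on-diagonal bound $p^\omega(t,x,y)\le Ct^{-d/2}$ from Corollary \ref{heatcol} (whose hypotheses are verified in Lemma \ref{lemmaassumptiontest} using exactly the moment bounds of Corollary \ref{pbound} and the ergodicity from Lemma \ref{lemmamuext}), and takes the maximum of the per-site thresholds over the finitely many lattice points meeting $\mathrm{supp}\,f$. The obstacle you flag is precisely what Lemma \ref{lemmaassumptiontest} resolves, so nothing is missing.
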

\begin{proof}
	Recall that $X = \{X_t: t \ge 0\}$ is a reversible continuous time Markov chain with generator $\mathcal{L}^\omega_X$ and that $p^\omega (t, x, y)$ be the transition density of $X$ with respect to the reversible measure or the heat kernels associated with $\mathcal{L}^\omega_X$, i.e. $p^\omega (t, x, y) = P^{\omega}_x[X_t = y].$
	Then by (\ref{exttocont})
	\begin{eqnarray}
	&&(f, e^{t\mathcal{L}_X^\omega}f)\nonumber\\
	&&=\sum_{x,y \in \mathbb{Z}^d}\int_{[0,1]^d}dz\int_{[0,1]^d}dz'f(x+z)f(y+z')p^\omega(t,x,y).
	\end{eqnarray}
	
	For any $x \in \text{supp}~f$, by Lemma \ref{lemmaassumptiontest} and Corollary \ref{heatcol}, there exists a constant $C$ and $N(x, \omega)$ s.t. for $\sqrt{t} \ge N(x, \omega)$ and all $y \in \mathbb{Z}^d$,
	\begin{equation}
	p^{\omega}(t, x, y) \le Ct^{-d/2}.
	\end{equation}
	
	Choose $N(f, \omega) = \max_{x \in \text{supp} ~ f}\{N^2(x, \omega)\}$, and then when $t \ge N(f, \omega)$
	\begin{eqnarray}
	&&(f, e^{t\mathcal{L}_X^\omega}f)\nonumber\\
	&&\le\sum_{x,y \in \mathbb{Z}^d}\int_{[0,1]^d}dz\int_{[0,1]^d}dz'f(x+z)f(y+z')Ct^{-d/2}\nonumber\\
	&&\le\int_{[0,1]^d}dz\int_{[0,1]^d}dz'\norm{f}^2_\infty Ct^{-d/2}\sum_{x,y \in \mathbb{Z}^d}\mathbbm{1}_{(y+z' \in supp f)}\mathbbm{1}_{(x+z \in supp f)}\nonumber\\
	&&=C \norm{f}^2_\infty\lambda(\text{supp}~f)^2 \frac{1}{t^{d/2}},
	\end{eqnarray} 
	in which $\lambda(\text{supp}~f)$ by our definition is the Lebesgue measure of $\text{supp}~f$
\end{proof}

\begin{col}\label{trunc}
	Let $\mu$ be a translation-invariant, ergodic gradient Gibbs measure defined in (\ref{DLR}) and $\tilde{\mu}$ is the extension of $\mu$ to $\mathbb{R}^{\mathbb{Z}^d} \times \mathbb{R}^E$ defined in (\ref{muext}). Then for $\tilde{\mu}-a.s.~\omega$, if $f \in C_0^\infty(\mathbb{R}^d)$,
	\begin{equation}\label{tailbound}
	\lim\limits_{M \to \infty}\sup\limits_{0<\delta<1}\int_{M}^{\infty}dt\delta^{-2}(f_\delta, e^{t\delta^{-2}\mathcal{L}_X^\omega}f_\delta) = 0.
	\end{equation}
\end{col}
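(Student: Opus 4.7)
The plan is to apply the pointwise bound from Lemma \ref{lemmatailbd} to $f_\delta$ after the natural change of variables $s = t\delta^{-2}$, which transforms the integral into
\begin{equation*}
\int_M^\infty dt\,\delta^{-2}(f_\delta, e^{t\delta^{-2}\mathcal{L}_X^\omega} f_\delta) \;=\; \int_{M\delta^{-2}}^\infty ds\,(f_\delta, e^{s\mathcal{L}_X^\omega} f_\delta).
\end{equation*}
First I would record the effect of the scaling $f_\delta(x) = \delta^{d/2+1}f(\delta x)$ on the two quantities appearing in Lemma \ref{lemmatailbd}: $\norm{f_\delta}_\infty = \delta^{d/2+1}\norm{f}_\infty$ and $\lambda(\text{supp}\,f_\delta) = \delta^{-d}\lambda(\text{supp}\,f)$, so that the product satisfies
\begin{equation*}
\norm{f_\delta}_\infty^2\,\lambda(\text{supp}\,f_\delta)^2 \;=\; \delta^{2-d}\,\norm{f}_\infty^2\,\lambda(\text{supp}\,f)^2.
\end{equation*}

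Next, applying Lemma \ref{lemmatailbd} to $f_\delta$ pointwise for each $s \geq M\delta^{-2}$ produces a constant $C = C(f,\omega)$ such that $(f_\delta, e^{s\mathcal{L}_X^\omega} f_\delta) \leq C\delta^{2-d}\,s^{-d/2}$. Since $d\geq 3$ implies $d/2 > 1$, integrating gives
\begin{equation*}
\int_{M\delta^{-2}}^\infty C\,\delta^{2-d}\,s^{-d/2}\,ds \;=\; \frac{2C}{d-2}\,\delta^{2-d}\,(M\delta^{-2})^{1-d/2} \;=\; \frac{2C}{d-2}\,M^{1-d/2},
\end{equation*}
where the powers of $\delta$ cancel exactly. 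The right-hand side is independent of $\delta$ and tends to $0$ as $M \to \infty$, which is (\ref{tailbound}).

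The main obstacle is ensuring the threshold condition $s > N(\omega, f_\delta)$ of Lemma \ref{lemmatailbd} holds uniformly in $\delta \in (0,1)$ when $s \geq M\delta^{-2}$. Since $N(\omega,f_\delta)= \max_{x \in \text{supp}\,f_\delta \cap \mathbb{Z}^d} N^2(x,\omega)$ is a maximum over on the order of $\delta^{-d}$ lattice sites, one must rule out pathologically large values of $N^2(x,\omega)$ in the support. This follows from the ergodicity of $\tilde\mu$ under spatial shifts combined with sufficient moment bounds on $N^2(\cdot,\omega)$ furnished by Corollary \ref{heatcol}: a standard Borel--Cantelli argument yields $N(\omega,f_\delta) = o(\delta^{-2})$ almost surely, so that for $M \geq M_0(\omega,f)$ one indeed has $M\delta^{-2} > N(\omega,f_\delta)$ uniformly in $\delta \in (0,1)$. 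The conclusion then follows by letting $M\to\infty$ in the $\delta$-independent bound above.
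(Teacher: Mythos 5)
Your computation is exactly the paper's: the substitution $s=t\delta^{-2}$, the scaling identity $\norm{f_\delta}_\infty^2\lambda(\mathrm{supp}\,f_\delta)^2=\delta^{2-d}\norm{f}_\infty^2\lambda(\mathrm{supp}\,f)^2$, the application of Lemma \ref{lemmatailbd} to $f_\delta$, and the observation that the powers of $\delta$ cancel so that the tail integral is bounded by a $\delta$-independent multiple of $M^{1-d/2}$, which vanishes for $d\ge 3$. So the core of the argument matches the paper and is correct.

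Where you go beyond the paper is the threshold condition, and you are right to flag it: the hypothesis of Lemma \ref{lemmatailbd} for $f_\delta$ is $s>N(\omega,f_\delta)$ with $N(\omega,f_\delta)=\max_{x\in\mathrm{supp}\,f_\delta}N^2(x,\omega)$ a maximum over roughly $\delta^{-d}$ lattice sites, whereas the paper simply writes ``if $t>N(\omega)$ and $0<\delta<1$ then $\delta^{-2}t>N(\omega)$,'' implicitly treating the threshold as independent of $\delta$. Your diagnosis of the issue is the more careful one. However, your proposed repair is not yet a proof: you assert that ``sufficient moment bounds on $N^2(\cdot,\omega)$ furnished by Corollary \ref{heatcol}'' plus Borel--Cantelli give $N(\omega,f_\delta)=o(\delta^{-2})$, but Corollary \ref{heatcol} (and Lemma \ref{lemmaassumptiontest}) only assert the \emph{existence} of $N(x,\omega)$ for a.e.\ $\omega$; they supply no tail or moment estimate on $x\mapsto N(x,\omega)$, and a union bound over $\delta^{-d}$ sites needs roughly a finite $d$-th moment of $N(x,\omega)$. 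To close this one would have to import the quantitative control on the random scale from \cite{ADS15H} (where $N(x,\omega)$ is shown to grow at most polynomially in $|x|$ under the moment assumptions, via exactly the ergodic/maximal-function argument you gesture at). So: same route as the paper, correct modulo a step that the paper also leaves unjustified, and your sketched fix identifies the right mechanism but does not yet supply the needed input.
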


\begin{proof}
	Recall that $f_\delta(x) = \delta^{d/2+1}f(\delta x)$. Thus  $\norm{f_\delta}_\infty =  \delta^{d/2+1}\norm{f}_\infty$. Also notice that $\lambda(\text{supp}~ f_\delta) = \delta^{-d}\lambda(\text{supp}~ f)$. For $\tilde{\mu}-a.s.~\omega$, let $N(f, \omega)$ be the constant in Lemma \ref{lemmatailbd}. If $t > N(\omega)$ and $0<\delta<1$, $\delta^{-2}t > N(\omega)$. Thus by Lemma \ref{lemmatailbd}, when $t > N(\omega)$, 
	\begin{equation}
	\delta^{-2}(f_\delta, e^{t\delta^{-2}\mathcal{L}_X^\omega}f_\delta) \le \delta^{-2} C \norm{f_\delta}^2_\infty\lambda(\text{supp}~f_\delta)^2 \frac{1}{t^{d/2}} =  C \norm{f}^2_\infty\lambda(\text{supp}~f)^2 \frac{1}{t^{d/2}}.
	\end{equation}
	Then (\ref{tailbound}) holds for $d \ge 3$.
\end{proof}

\subsection{Scaling limit}
\hspace*{\parindent}Before proving the final result about scaling limits, we need two lemmas.
\begin{lemma}
	Let $\mu$ be a translation-invariant, ergodic gradient Gibbs measure and $\tilde{\mu}$ be the extension of $\mu$ to $\mathbb{R}^{\mathbb{Z}^d} \times \mathbb{R}^E$ defined in (\ref{muext}). There exists a positive semidefinite, non-degenerate $d\times d$ matrix $q$, s.t. for every $t > 0$ and $f \in C_0^\infty(\mathbb{R}^d)\cap\mathcal{H}$,
	\begin{equation}
	\delta^{-2}(f_\delta, e^{t\delta^{-2}\mathcal{L}_X^\omega}f_\delta) \to (f, e^{tQ}f)~~~~~~~~~~~~\tilde{\mu}-a.s.~\omega.
	\end{equation}
	where Q is defined from q by (\ref{colmatrix}).
\end{lemma}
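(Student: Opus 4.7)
The plan is to reinterpret the bilinear form on the left as an expectation with respect to the rescaled random walk $Y^\delta_s:=\delta X_{s\delta^{-2}}$ and then identify its limit via the quenched invariance principle from \cite{ADS14} together with the quenched Gaussian heat kernel upper bounds from \cite{ADS15H}. The integrability hypotheses required by those references are supplied by Corollary \ref{pbound}, since the measure $\tilde{\mu}$ under the extension \eqref{muext} produces conductances with moments of every positive and negative order; combined with the ergodicity of $\tilde{\mu}$ established in Lemma \ref{lemmamuext}, this is enough to invoke the random conductance machinery in the required form.

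The first concrete step is a change of variables. Using $(f,\mathcal{R}f)=\int dx\,dy\,f(x)f(y)\mathcal{R}(\lfloor x\rfloor,\lfloor y\rfloor)$ together with $f_\delta(x)=\delta^{d/2+1}f(\delta x)$ and the substitutions $u=\delta x$, $v=\delta y$, one rewrites
\[
\delta^{-2}(f_\delta,e^{t\delta^{-2}\mathcal{L}^\omega_X}f_\delta)
=\delta^{-d}\int du\,dv\,f(u)f(v)\,p^\omega\!\bigl(t\delta^{-2},\lfloor u/\delta\rfloor,\lfloor v/\delta\rfloor\bigr).
\]
Inserting the identity $f(v)\approx f_\delta(\lfloor v/\delta\rfloor)$ up to errors of order $\delta$, the inner integral becomes $\mathbb{E}^\omega_{\lfloor u/\delta\rfloor}[f(Y^\delta_t)]$ up to a vanishing Riemann sum error, so the whole expression is essentially
\[
\int du\,f(u)\,\mathbb{E}^\omega_{\lfloor u/\delta\rfloor}[f(Y^\delta_t)].
\]
The next step is to apply the quenched invariance principle from \cite{ADS14}: for $\tilde{\mu}$-a.e.\ $\omega$, the law of $Y^\delta_t$ under $P^\omega_0$ converges weakly to the law of $B^Q_t$, a Brownian motion with some positive-definite, deterministic covariance $q$ depending only on $\alpha$ and $d$. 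Translation invariance of $\tilde{\mu}$ transfers this convergence from starting point $0$ to a starting point $\lfloor u/\delta\rfloor$ after applying the shift $\tau_{\lfloor u/\delta\rfloor}$ to the environment. The pointwise convergence
\[
\mathbb{E}^\omega_{\lfloor u/\delta\rfloor}[f(Y^\delta_t)]\longrightarrow \mathbb{E}^Q[f(u+B^Q_t)]=\int dv\,f(v)\,p^Q_t(u,v)
\]
then holds for a.e.\ $u$, and integrating against $f(u)$ gives $(f,e^{tQ}f)$.

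The main obstacle is the passage from a.s. pointwise convergence in $u$ to convergence of the integral. Two issues must be handled together: the null environmental set from the quenched CLT depends on the starting point (equivalently, on $u$), and the integrand must be dominated uniformly in $\delta$ to apply Lebesgue's theorem. For the first issue I would use Fubini combined with translation invariance of $\tilde{\mu}$, exactly as in the parallel argument of \cite{BS11}, to produce a single full-measure set of $\omega$ valid for Lebesgue-a.e.\ $u$ in the compact support of $f$. For the second, the quenched near-diagonal Gaussian upper bound from \cite{ADS15H}, used in Lemma \ref{lemmatailbd} already, gives $\delta^{-d}p^\omega(t\delta^{-2},\lfloor u/\delta\rfloor,\lfloor v/\delta\rfloor)\le C(t,\omega)$ uniformly for all small $\delta$ once $\delta^{-2}t$ exceeds a (finite) $\omega$-dependent threshold, so dominated convergence applies against $|f(u)f(v)|$, which is bounded and compactly supported. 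Finally, the identification of $q$ as non-degenerate (positive definite) follows directly from the conclusion of the \cite{ADS14} invariance principle under the moment bounds furnished by Corollary \ref{pbound}, which is what pins down $Q$ through \eqref{colmatrix}.
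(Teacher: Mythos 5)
Your overall route is the same as the paper's: rewrite the form as $\int dx\, f(x)\,E^\omega_{\lfloor x/\delta\rfloor}[f(\delta X_{t\delta^{-2}}+\cdots)]$ via the change of variables $x\mapsto \delta x$, then pass to the limit using the quenched invariance principle of \cite{ADS14}, with the moment hypotheses supplied by Corollary \ref{pbound} and ergodicity by Lemma \ref{lemmamuext}. There is, however, a genuine gap at the one step where the real work happens.

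The gap is in how you handle the starting point. You propose to transfer the QFCLT from the origin to $\lfloor u/\delta\rfloor$ by applying the shift $\tau_{\lfloor u/\delta\rfloor}$ to the environment and invoking translation invariance, then to patch up the $u$-dependence of the exceptional null set by Fubini. But the starting point $\lfloor u/\delta\rfloor$ is not fixed: it moves with $\delta$ and tends to infinity as $\delta\downarrow 0$ for $u\neq 0$, so for each $\delta$ you are invoking the QFCLT in a \emph{different} shifted environment $\tau_{\lfloor u/\delta\rfloor}\omega$. Knowing that, for $\tilde{\mu}$-a.e.\ $\omega$, the invariance principle holds in $\tau_z\omega$ for every fixed $z$ does not yield convergence along the diagonal sequence $\delta\mapsto\tau_{\lfloor u/\delta\rfloor}\omega$, since the convergence is not uniform in the starting point; and Fubini over $(u,\omega)$ does not repair this, because the quantity whose limit you need depends on $\delta$ through the starting point itself. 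This is precisely why the paper isolates Corollary \ref{thmnotorigin}: there the martingale decomposition is re-centred at the moving starting point, $M_t=X_t-\lfloor x/\epsilon\rfloor-\chi(\omega,X_t-\lfloor x/\epsilon\rfloor)$, and the sublinearity of the corrector absorbs the displacement, giving $E^\omega_{\lfloor nx\rfloor}[F(X^{(n)})]\to E^{BM}_0[F(\Sigma\cdot W+x)]$ directly. You need this corrector argument (or an equivalent local-limit/heat-kernel argument) at that step; the shift-plus-Fubini reduction as written does not close.

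A minor further remark: for the dominated-convergence step at fixed $t$ you do not need the heat kernel bounds at all, since $|E^\omega_{\lfloor u/\delta\rfloor}[f(Y^\delta_t)]|\le\norm{f}_\infty$ and $f$ is compactly supported, so $|f(u)|\,\norm{f}_\infty$ is an integrable dominating function. The estimates of \cite{ADS15H} are what justify the uniform-in-$\delta$ truncation of the $t$-integral in Corollary \ref{trunc}, which enters the \emph{next} lemma, not this one.
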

\begin{proof}
	By the definition of $\mathcal{L}^\omega_X$ and (\ref{exttocont}), we have
	\begin{eqnarray}
	(f, e^{t\mathcal{L}^\omega_X} f) &= &\int dx dy f(x) f(y) p^\omega(t,\floor*{x}, \floor*{y}) \nonumber\\
	&=&\int dx f(x)\int_{[0,1]^d}dz\sum_{y \in \mathbb{Z}^d}f(y+z) p^\omega(t,\floor*{x}, \floor*{y})\nonumber\\
	&=&\int dx f(x)\int_{[0, 1]^d}dzE^{\floor*{x}}[f(X_t+z)].
	\end{eqnarray}
	
	Recall that $f_\delta(x) = \delta^{d/2+1}f(\delta x)$, so
	\begin{eqnarray}
	&&\delta^{-2}(f_\delta, e^{t\delta^{-2}\mathcal{L}_X^\omega}f_\delta) \nonumber\\&=&\delta^{-2}\int dx\delta^{(d/2+1)}f(\delta x)\int_{[0,1]^d}dz\delta^{(d/2+1)} E^{\floor*{x}}[f(\delta X_{t\delta^{-2}}+\delta z)]\nonumber\\
	&=& \delta^{-d}\int dx f(\delta x)\int_{[0,1]^d}dz E^{\floor*{x}}[f(\delta X_{t\delta^{-2}}+\delta z)]\nonumber\\
	&=& \int dx f(x)\int_{[0,1]^d}dz E^{\floor*{x/\delta}}[f(\delta X_{t\delta^{-2}}+\delta z)].
	\end{eqnarray} 
	The last equation is by variable change $x \to \delta x$. Then by Corollary \ref{thmnotorigin} and the fact that $f \in C_0^\infty(\mathbb{R}^d)$, we have \begin{equation}
	\delta^{-2}(f_\delta, e^{t\delta^{-2}\mathcal{L}_X^\omega}f_\delta) \to (f, e^{tQ}f)~~~ \tilde{\mu}-a.s.~\omega.
	\end{equation}
\end{proof}

\begin{lemma}\label{varconv}
	Let $\mu$ be a translation-invariant, ergodic gradient Gibbs measure defined in (\ref{DLR}) and $\tilde{\mu}$ is the extension of $\mu$ to $\mathbb{R}^{\mathbb{Z}^d} \times \mathbb{R}^E$ defined in (\ref{muext}). There exists a positive semidefinite, non-degenerate $d\times d$ matrix $q$, s.t. for $f \in C_0^\infty(\mathbb{R}^d)\cap\mathcal{H}$,
	\begin{equation}
	\lim\limits_{\delta \downarrow 0}(f_\delta, (-\mathcal{L}_X^\omega)^{-1} f_\delta) = (f, (-Q)^{-1}f)~~~~~~~~~~~~\tilde{\mu}-a.s.~\omega.
	\end{equation}
	where Q is defined from q by (\ref{colmatrix}).
\end{lemma}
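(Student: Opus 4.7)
The plan is to recognize both sides as time integrals of the corresponding heat semigroups and reduce the claim to the pointwise convergence of the preceding lemma, together with the tail estimate of Corollary \ref{trunc}. Since $\mathcal{L}_X^\omega$ is self-adjoint and non-positive on $\ell^2(\mathbb{Z}^d)$ (the counting measure is reversible) and $Q$ is a constant-coefficient, negative-definite elliptic operator with sufficient spectral decay for $d \ge 3$, I would begin from the spectral identity
\begin{equation*}
(f_\delta, (-\mathcal{L}_X^\omega)^{-1} f_\delta) = \int_0^\infty (f_\delta, e^{s\mathcal{L}_X^\omega} f_\delta)\, ds = \int_0^\infty \delta^{-2}(f_\delta, e^{t\delta^{-2}\mathcal{L}_X^\omega} f_\delta)\, dt,
\end{equation*}
where the second equality uses the substitution $s = t\delta^{-2}$, and analogously $(f, (-Q)^{-1} f) = \int_0^\infty (f, e^{tQ} f)\, dt$.

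Next I would split each integral at a threshold $M > 0$. For the tails, Corollary \ref{trunc} directly gives
$\sup_{0 < \delta < 1}\int_M^\infty \delta^{-2}(f_\delta, e^{t\delta^{-2}\mathcal{L}_X^\omega} f_\delta)\, dt \to 0$
as $M \to \infty$, while on the limiting side the Gaussian kernel bound $p^Q_t(x,y) \le C t^{-d/2}$ for the constant-coefficient operator $Q$ yields $(f, e^{tQ} f) \le C\|f\|_1^2\, t^{-d/2}$, which is integrable on $[M,\infty)$ for $d \ge 3$; hence $\int_M^\infty (f, e^{tQ} f)\, dt \to 0$ as well.

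On the compact interval $[0, M]$, the previous lemma supplies the pointwise-in-$t$ convergence $\delta^{-2}(f_\delta, e^{t\delta^{-2}\mathcal{L}_X^\omega}f_\delta) \to (f, e^{tQ}f)$ for $\tilde{\mu}$-a.s.\ $\omega$. To invoke dominated convergence, I would establish the uniform bound $\delta^{-2}(f_\delta, e^{t\delta^{-2}\mathcal{L}_X^\omega}f_\delta) \le \|f\|_{L^2(\mathbb{R}^d)}^2$ for all $t \ge 0$ and $\delta \in (0,1)$. By the extension convention (\ref{exttocont}), if $\bar{g}(n) := \int_{n+[0,1]^d} g(y)\, dy$, then $(g, \mathcal{R}g) = \langle \bar{g}, \mathcal{R}\bar{g}\rangle_{\ell^2(\mathbb{Z}^d)}$; since $e^{s\mathcal{L}_X^\omega}$ is a self-adjoint contraction on $\ell^2(\mathbb{Z}^d)$, Cauchy--Schwarz gives $(f_\delta, e^{s\mathcal{L}_X^\omega}f_\delta) \le \|\bar{f}_\delta\|_{\ell^2}^2$, and a cube-by-cube Jensen inequality yields $\|\bar{f}_\delta\|_{\ell^2}^2 \le \|f_\delta\|_{L^2}^2 = \delta^2 \|f\|_{L^2}^2$ via the scaling $f_\delta(x) = \delta^{d/2+1}f(\delta x)$. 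Dividing by $\delta^2$ produces the claimed bound, dominated convergence then gives $\int_0^M \delta^{-2}(f_\delta, e^{t\delta^{-2}\mathcal{L}_X^\omega} f_\delta)\, dt \to \int_0^M (f, e^{tQ} f)\, dt$, and combining with the tail estimates above completes the proof.

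The main obstacle is producing a dominating function that is uniform in both $t$ and $\delta$ on $[0, M]$ and $\omega$-independent, so that dominated convergence is legitimate for $\tilde{\mu}$-a.s.\ $\omega$; the $\ell^2$-contractivity of $e^{s\mathcal{L}_X^\omega}$ (which holds for every conductance configuration) combined with the floor-discretization in (\ref{exttocont}) makes this bound $\omega$-free and avoids needing any quantitative heat kernel estimate over the compact time interval.
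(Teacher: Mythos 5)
Your proof is correct and follows essentially the same route as the paper: represent $(-\mathcal{L}_X^\omega)^{-1}$ and $(-Q)^{-1}$ as time integrals of the semigroups, truncate the tails via Corollary \ref{trunc}, and pass to the limit on $[0,M]$ using the pointwise convergence from the preceding lemma. The only (minor) difference is that on the compact interval you justify the interchange by dominated convergence with the $\omega$-free bound $\delta^{-2}(f_\delta, e^{t\delta^{-2}\mathcal{L}_X^\omega}f_\delta) \le \norm{f}_{L^2}^2$ from $\ell^2$-contractivity of the semigroup, whereas the paper invokes monotonicity in $t$ and Dini's lemma to get uniform convergence; both are valid, and your bound is if anything more explicit.
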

\begin{proof}
	For any $f \in C_0^\infty(\mathbb{R}^d)\cap\mathcal{H}$,
	\begin{equation}
	(f, (-\mathcal{L}^\omega_X)^{-1}f) = \int_{0}^{\infty}dt(f, e^{t\mathcal{L}^\omega_X} f).
	\end{equation}
	Replacing $f$ by $f_\delta$ and $t$ by $\delta^{-2}t$, we obtain
	\begin{equation}
	(f_\delta, (-\mathcal{L}_X^\omega)^{-1} f_\delta) = \int_{0}^{\infty}dt\delta^{-2}(f_\delta, e^{t\delta^{-2}\mathcal{L}_X^\omega}f_\delta).
	\end{equation}
	By the above lemma, $\delta^{-2}(f_\delta, e^{t\delta^{-2}\mathcal{L}_X^\omega}f_\delta) \to (f, e^{tQ}f)~ \tilde{\mu}-a.s.~\omega$; the monotonicity in $t$ and continuity of the limit shows that the convergence is actually uniform on compact intervals by Dini's Lemma. By  Corollary \ref{trunc}, the integral can be truncated to a finite interval and similarly for the integral of the limit. Therefore it follows that
	\begin{equation}
	\lim\limits_{\delta \downarrow 0}(f_\delta, (-\mathcal{L}_X^\omega)^{-1} f_\delta) = \int_{0}^{\infty}dt(f, e^{tQ}f) = (f, -Q^{-1}f).
	\end{equation} 
\end{proof}

Now we will prove the main result about the scaling limit.
\begin{proof}[Proof of Theorem \ref{main}]
	Let $\mu$ be a translation-invariant, ergodic gradient Gibbs measure and $\tilde{\mu}$ is the extension of $\mu$ to $\mathbb{R}^{\mathbb{Z}^d} \times \mathbb{R}^E$ defined in (\ref{muext}). We want to show $\phi(f_\delta)$ converge weakly to a normal random variable with mean zero and variance $(f, (-Q)^{-1}f)$. By Lemma \ref{reginequal}, it suffices to prove this for $f \in C_0^\infty(\mathbb{R}^d)\cap\mathcal{H}$.
	
	By Proposition \ref{Propconditionexp}, we know that $\phi(f)$ is Gaussian conditional on $\omega$. The characteristic function of Gaussian random variable $X$
	\begin{equation}
	E(e^{iX}) = e^{iE(X)-1/2\text{Var}(X)},
	\end{equation}
	shows, by conditional expectation and variance in  Proposition \ref{Propconditionexp}, that
	\begin{equation}\label{chafunc}
	E_{\tilde{\mu}}(e^{i\phi(f)}|\mathcal{F})(\omega) = e^{-1/2(f, (-\mathcal{L}^\omega_X)^{-1}f)}.
	\end{equation}
	By Lemma \ref{varconv}, 	$\lim\nolimits_{\delta \downarrow 0}(f_\delta, (-\mathcal{L}_X^\omega)^{-1} f_\delta) = (f, (-Q)^{-1}f)~\tilde{\mu}-a.s.~\omega.$ Since the right-hand side of (\ref{chafunc}) is a bounded function, Theorem \ref{main} follows by the bounded convergence theorem.
\end{proof}

\appendix
\section*{Appendices}
\section{Stable density} \label{App:stabledensity}
\hspace*{\parindent} Let $f_\alpha(x)$ be the unique positive density  such that
\begin{equation}\label{lpstable1}
\int_{0}^{\infty} e^{-\lambda x}f_\alpha(x) dx = e^{-\lambda^\alpha}.
\end{equation}
By the Theorem \ref{laplacestable} later, $f_\alpha(x)$ is the density of an $\alpha$ stable distribution, whose definition will be given later. In the following part, we first introduce the definition and some important properties of stable distributions. Then the properties of the stable densities will be discussed:  log concavity and tail behavior. A good reference for basic facts of stable density is \cite{Zol86}. 

\subsection{Definition of stable distribution}
\hspace*{\parindent}
Here we give the definition of stable distribution. Let $X, X_1, X_2, \cdots$ be independent random variables with a common distribution $R$ and let $S_n = X_1+X_2+\cdots+X_n$.

\begin{definition}\label{defstable}
	The distribution R is stable if for each n there exist constants $c_n >0$, $\gamma_n$ such that
	\begin{equation}
	S_n \stackrel{\mathrm{d}}{=} c_nX+\gamma_n
	\end{equation}
	and R is not concentrated at one point.
\end{definition}

The next theorem characterizes the norming constant $c_n$ and gives the definition of the characteristic exponent of $R$.
\begin{thm}[{\cite[Theorem 1 in Section 6.1]{Fell71}}]
	The norming constants $c_n$ are of the form $c_n = n^{1/\alpha}$ with $0 < \alpha \le 2$. The constant $\alpha$ will be called the characteristic exponent of $R$. 		
\end{thm}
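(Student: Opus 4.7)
The plan is to exploit the stability relation $S_n \stackrel{d}{=} c_n X + \gamma_n$ in two different ways to force a multiplicative structure on $c_n$, and then use regularity to promote multiplicativity to a power law, constraining the exponent at the end. I will work throughout with the stability definition and standard convergence-of-types reasoning; characteristic functions enter only for the final bound.

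First I would establish $c_{mn} = c_m c_n$. Decompose $S_{mn} = X_1 + \cdots + X_{mn}$ in two ways. Directly, stability gives $S_{mn} \stackrel{d}{=} c_{mn} X + \gamma_{mn}$. Alternatively, group the summands into $m$ consecutive blocks of size $n$; each block $Y_j := X_{(j-1)n+1} + \cdots + X_{jn}$ has the law of $S_n$, hence of $c_n X + \gamma_n$, and the $Y_j$'s are independent. Therefore
\begin{equation*}
S_{mn} = \sum_{j=1}^m Y_j \stackrel{d}{=} c_n \sum_{j=1}^m X_j' + m\gamma_n \stackrel{d}{=} c_n (c_m X + \gamma_m) + m\gamma_n = c_m c_n X + c_n \gamma_m + m \gamma_n,
\end{equation*}
using stability once more on the inner sum of iid copies $X_j'$ of $X$. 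Matching the two expressions and using that $R$ is non-degenerate, the convergence-of-types theorem forces $c_{mn} = c_m c_n$ (and $\gamma_{mn} = c_n \gamma_m + m\gamma_n$).

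Next I would pass from multiplicativity to the power form $c_n = n^\theta$, which requires additional regularity beyond the purely algebraic $c_{mn} = c_m c_n$. I would first show $c_n \to \infty$: if $\phi$ denotes the characteristic function of $X$, stability yields $\phi(t)^n = e^{it\gamma_n}\phi(c_n t)$, so $|\phi(c_n t_0)| = |\phi(t_0)|^n \to 0$ for any $t_0$ with $|\phi(t_0)| < 1$, which rules out $c_n$ remaining bounded. I would then argue a soft monotonicity / regular-variation property (for example that $c_{n+1}/c_n \to 1$) by applying convergence of types to $S_{n+1} = S_n + X_{n+1}$; together with the multiplicative relation, a standard argument for slowly varying multiplicative sequences on $\mathbb{N}$ forces $c_n = n^\theta$ for some $\theta > 0$. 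Setting $\alpha := 1/\theta$ gives $c_n = n^{1/\alpha}$ with $\alpha \in (0, \infty)$.

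Finally, to obtain $\alpha \le 2$, I would return to the characteristic-function relation $\phi(t)^n = e^{it\gamma_n}\phi(n^{1/\alpha}t)$. Iterating and rescaling, this functional equation together with continuity of $\phi$ at $0$ forces the small-$t$ asymptotics $\phi(t) = \exp(i\mu t - c|t|^\alpha(1 + o(1)))$ (up to a skewness factor). Bochner's theorem (positive-definiteness of $\phi$) then rules out $\alpha > 2$, since $\exp(-c|t|^\alpha)$ fails to be positive-definite in that range. I expect the main obstacle to be Step 2: multiplicativity alone admits pathological solutions on $\mathbb{N}$, so the probabilistic regularity needed to rule them out (either monotonicity of $c_n$ or $c_{n+1}/c_n \to 1$) must be extracted carefully from the stability relation via convergence-of-types rather than guessed a priori.
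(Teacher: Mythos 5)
This statement is imported verbatim from Feller (Vol.~II, VI.1, Theorem~1); the paper supplies no proof of its own, so the only meaningful comparison is with Feller's argument. Your outline is essentially that classical proof and is sound: the block decomposition plus convergence of types correctly yields $c_{mn}=c_mc_n$, and your derivation of $c_n\to\infty$ and of $c_{n+1}/c_n\to 1$ (from $c_{n+1}X+\gamma_{n+1}\stackrel{d}{=}c_nX_1+X_2+\gamma_n$, dividing by $c_n$ and letting $X_2/c_n\to 0$) is correct, modulo one small point: to rule out a bounded subsequence $c_{n_k}\le B$ you should pick $t_0$ with $|t_0|\le \delta/B$ inside the neighbourhood where $|\phi|\ge 1/2$ \emph{and} with $|\phi(t_0)|<1$ (possible since $\{|\phi|=1\}$ is at worst a discrete lattice for nondegenerate $R$), so that $|\phi(c_{n_k}t_0)|\ge 1/2$ contradicts $|\phi(t_0)|^{n_k}\to 0$.

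Two caveats on the remaining steps. First, the passage from ``completely multiplicative with $c_{n+1}/c_n\to 1$'' to $c_n=n^\theta$ is exactly Erd\H{o}s's theorem on additive arithmetic functions ($f(mn)=f(m)+f(n)$, $f(n+1)-f(n)\to 0$ implies $f=\theta\log$); this is true but genuinely nontrivial --- the naive telescoping bound $|f(a)-f(b)|\le (a-b)\sup_{i\ge b}|f(i+1)-f(i)|$ does not close, so you cannot wave at it as ``standard.'' Feller avoids it: he uses a symmetrization inequality to get the near-monotonicity $c_m\le Mc_{m+n}$ and then sandwiches $n^k$ between powers of $2$, which with $c_{2^j}=c_2^j$ pins down $c_n=n^{\log c_2/\log 2}$ by an elementary limit. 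Second, your finish for $\alpha\le 2$ via ``$e^{-c|t|^\alpha}$ is not positive definite for $\alpha>2$'' is circular as stated --- that non-positive-definiteness is precisely the fact in question. The clean argument works with $h(t)=-\log|\phi(t)|$, which satisfies $h(s)=n\,h(sn^{-1/\alpha})$; along $u_n=sn^{-1/\alpha}$ one gets $(1-|\phi(u_n)|^2)/u_n^2\to 0$ when $\alpha>2$, and the truncation inequality $\liminf_{t\to 0}2(1-\operatorname{Re}\psi(t))/t^2\ge E Y^2$ applied to the symmetrized law forces degeneracy, a contradiction. With those two steps made precise your proof is complete and matches Feller's in substance, differing only in how the power law is extracted from multiplicativity.
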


The next theorem  explains that $f_\alpha(x)$  is a special type of stable density with $0<\alpha< 1$.

\begin{thm}[{\cite[Theorem 1 in Section 13.6]{Fell71}}] \label{laplacestable}
	For fixed $0 < \alpha <1$ the function $\gamma_\alpha(\lambda) = e^{-\lambda^{\alpha}}$ is the Laplace transform of a distribution $G_\alpha$ with the following properties:
	\begin{enumerate}
		\item $G_\alpha$ is stable with $\gamma_n = 0$ and $c_n = n^{1/\alpha}$ in Definition \ref{defstable}.
		\item When $x \to \infty$, 
		\begin{equation} x^\alpha[1-G_{\alpha}(x)] \to \frac{1}{\Gamma(1-\alpha)}. \label{dtbinfty}
		\end{equation}
	\end{enumerate}
\end{thm}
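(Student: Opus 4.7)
The plan is to prove the three claims in turn: first, that $\gamma_\alpha(\lambda) = e^{-\lambda^\alpha}$ is the Laplace transform of some probability measure $G_\alpha$ on $[0,\infty)$; second, that $G_\alpha$ is stable with $c_n = n^{1/\alpha}$ and $\gamma_n = 0$; third, that the tail of $G_\alpha$ satisfies the stated asymptotic with the Gamma-function constant.

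For the first claim I would invoke Bernstein's theorem, which asserts that a function on $(0,\infty)$ is the Laplace transform of a positive Borel measure on $[0,\infty)$ iff it is completely monotone. For $0 < \alpha < 1$ the map $\lambda \mapsto \lambda^\alpha$ is a Bernstein function, since its derivative $\alpha\lambda^{\alpha-1}$ is completely monotone on $(0,\infty)$, and the composition of the completely monotone function $e^{-\cdot}$ with a Bernstein function is again completely monotone. Hence $\gamma_\alpha$ is completely monotone, and $\gamma_\alpha(0)=1$ identifies the associated measure $G_\alpha$ as a probability distribution. Since $\gamma_\alpha(\lambda)$ is not of the form $e^{-c\lambda}$, $G_\alpha$ is not a point mass, so Definition \ref{defstable} applies. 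For the second claim, let $X, X_1, \dots, X_n$ be iid with law $G_\alpha$. The Laplace transform of $S_n = X_1+\cdots+X_n$ equals $\gamma_\alpha(\lambda)^n = e^{-n\lambda^\alpha}$, while that of $n^{1/\alpha} X$ equals $\gamma_\alpha(n^{1/\alpha}\lambda) = e^{-n\lambda^\alpha}$. Uniqueness of the Laplace transform yields $S_n \stackrel{d}{=} n^{1/\alpha} X$, proving stability with $c_n = n^{1/\alpha}$ and $\gamma_n = 0$.

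For the third claim I would apply Karamata's Tauberian theorem. Setting $U(x) = 1 - G_\alpha(x)$ and integrating by parts (using that $G_\alpha$ has no atom at $0$, which follows from $\gamma_\alpha(\lambda) \to 0$ as $\lambda \to \infty$),
\[
\int_0^\infty e^{-\lambda x}\, U(x)\, dx \;=\; \frac{1 - \gamma_\alpha(\lambda)}{\lambda} \;=\; \frac{1 - e^{-\lambda^\alpha}}{\lambda} \;\sim\; \lambda^{\alpha-1} \quad\text{as } \lambda \to 0^+.
\]
The tail form of Karamata's theorem (with slowly varying factor $L \equiv 1$ and index $1-\alpha \in (0,1)$) then gives $U(x) \sim x^{-\alpha}/\Gamma(1-\alpha)$ as $x \to \infty$, which is exactly (\ref{dtbinfty}). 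The main obstacle is precisely this step: one must invoke the Tauberian theorem in the form appropriate to tails of distributions rather than to distribution functions, and track the Gamma factor correctly through the regular-variation identity $\Gamma(\alpha)\Gamma(1-\alpha) = \pi/\sin(\pi\alpha)$ that underlies Karamata's constant. The first step can alternatively be carried out by differentiating $e^{-\lambda^\alpha}$ termwise via its series expansion, but the Bernstein-function composition route avoids these combinatorial estimates and is the cleanest argument.
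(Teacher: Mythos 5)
Your proposal is correct. The paper itself gives no proof of this statement --- it is quoted verbatim from Feller (Theorem 1 in Section 13.6 of \cite{Fell71}) --- and your argument is essentially Feller's own: complete monotonicity of $e^{-\lambda^\alpha}$ via composition with the Bernstein function $\lambda\mapsto\lambda^\alpha$, stability read off from the functional equation $\gamma_\alpha(\lambda)^n=\gamma_\alpha(n^{1/\alpha}\lambda)$, and the tail asymptotic from $1-\gamma_\alpha(\lambda)\sim\lambda^\alpha$ via Karamata's Tauberian theorem (only the minor aside about the reflection formula $\Gamma(\alpha)\Gamma(1-\alpha)=\pi/\sin(\pi\alpha)$ is unnecessary, since the constant $1/\Gamma(1-\alpha)$ falls out directly from the Tauberian theorem with index $\rho=1-\alpha$).
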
 

Except for a few $\alpha$'s, like the case of $\alpha = 0.5$ used in \cite{BS12}, there is no closed form for $f_\alpha$(x). However, it is possible to represent the stable density in an integral form. The following theorem is cited from (2.5.10)\footnote{There is a typing error in the definition of $U_\alpha$ in the reference: first power of $U_\alpha$ should be ${\alpha/(1-\alpha)}$ instead of ${\alpha/(\alpha-1)}$} in \cite{Zol86}.

\begin{thm}\label{thmrpint}
	For $0 < \alpha < 1$, let 
	\begin{equation}\label{Ufunction}
	U_\alpha(\phi) = \left(\frac{\sin\alpha\phi}{\alpha\sin\phi}\right)^{\alpha/(1-\alpha)}\frac{\sin(1-\alpha)\phi}{\alpha\sin\phi},\quad \phi \in (-\pi,\pi)
	\end{equation}
	and
	\[ z(x) = (1-\alpha)(x/\alpha)^{\alpha/(\alpha-1)}.\]
	Then for $x \ge 0$, 
	\begin{equation}\label{rpint}
	f_\alpha(x) = \frac{z(x)^{1/\alpha}}{2\pi (1-\alpha)^{1/\alpha}} \int_{-\pi}^\pi U_\alpha(\phi)\exp\left\{-z(x) U_\alpha(\phi)\right\}d\phi.
	\end{equation}
\end{thm}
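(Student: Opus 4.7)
The plan is to derive the integral representation (\ref{rpint}) from the Laplace transform characterization (\ref{lpstable1}) via contour deformation and steepest descent, following classical techniques for $\alpha$-stable densities. Since $e^{-\lambda^\alpha}$ is analytic on $\{\mathrm{Re}\,\lambda > 0\}$ and decays along vertical lines at rate $e^{-|\mathrm{Im}\,\lambda|^\alpha\cos(\pi\alpha/2)}$, Laplace inversion applied to (\ref{lpstable1}) gives
\begin{equation*}
f_\alpha(x) = \frac{1}{2\pi i}\int_{c-i\infty}^{c+i\infty}e^{\lambda x - \lambda^\alpha}\,d\lambda
\end{equation*}
for any $c > 0$. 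The function $\lambda^\alpha$ has a branch cut along the negative real axis, and my first step would be to deform the Bromwich contour into the steepest-descent curve emanating from the real saddle point of the exponent $\lambda x - \lambda^\alpha$.

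Setting $\frac{d}{d\lambda}(\lambda x - \lambda^\alpha) = 0$ yields the saddle at $\lambda^* = (\alpha/x)^{1/(\alpha-1)}$, and the value of the exponent there equals $-z(x)$, which is precisely how the normalization $z(x) = (1-\alpha)(x/\alpha)^{\alpha/(\alpha-1)}$ enters. I would then substitute $\lambda = \lambda^* \cdot \rho(\phi) e^{i\phi}$ along a deformed contour parametrized so that $\phi \in (-\pi,\pi)$ represents an angular coordinate, with $\rho(\phi)$ chosen to track the steepest-descent path. The key algebraic identity is that, with this parametrization, the rescaled exponent and Jacobian combine so that the real part factors as $z(x) U_\alpha(\phi)$ with $U_\alpha$ given by (\ref{Ufunction}); the curious products of powers of $(\sin\alpha\phi)/(\alpha\sin\phi)$ arise from resolving the saddle condition $\rho^{1-\alpha}\sin\phi = \text{const}\cdot \sin\alpha\phi$. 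After this substitution, the complex contour integral collapses to a single real integral over $\phi\in(-\pi,\pi)$ with integrand $U_\alpha(\phi)\exp(-z(x)U_\alpha(\phi))$, and the prefactor $z(x)^{1/\alpha}/(2\pi(1-\alpha)^{1/\alpha})$ is produced by combining the Jacobian of the substitution with the $1/(2\pi i)$ from Bromwich.

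The main obstacle is the bookkeeping of the change of variables: one must specify the precise steepest-descent contour, verify that $\phi\in(-\pi,\pi)$ parametrizes it exactly once, confirm that $U_\alpha > 0$ on this interval (with the appropriate integrable singularities at the endpoints $\phi = \pm\pi$ where $\sin(1-\alpha)\phi$ and $\sin\alpha\phi$ vanish), and carry out the trigonometric simplifications that produce the explicit form (\ref{Ufunction}). These are elementary but intricate manipulations; since the identity is stated in Zolotarev's monograph as (2.5.10), I would follow that derivation rather than reconstruct the full steepest-descent computation from scratch.
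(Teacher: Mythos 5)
The paper offers no proof of this statement at all --- it is quoted verbatim from Zolotarev's monograph, equation (2.5.10) (with a footnote correcting a sign in the exponent of $U_\alpha$) --- and your proposal ultimately does the same thing, deferring to that reference, while your accompanying Laplace-inversion/steepest-descent sketch is the standard and correct route by which the identity is derived. One small slip: the saddle of $\lambda x - \lambda^\alpha$ is at $\lambda^* = (x/\alpha)^{1/(\alpha-1)}$ rather than $(\alpha/x)^{1/(\alpha-1)}$, though your stated value $-z(x)$ of the exponent at the saddle is the correct one.
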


\subsection{Log concavity}
\hspace*{\parindent}A non-negative function $f : \mathbb{R}^n \to \mathbb{R}_+$ is logarithmically concave (or log-concave for short) if its domain is a convex set, and if it satisfies the inequality
$$f(\theta x + (1 - \theta) y) \geq f(x)^{\theta} f(y)^{1 - \theta}$$
for all $x,y$ in the domain of $f$ and $0 < \theta < 1$. Log concavity of $f_{1/2}(e^t)$ plays an important role in Brydges-Spencer's proof for the case $\alpha = 1/2$. It provides a Brascamp-Lieb bound (Theorem \ref{BLBthm}) for the second moment of the auxiliary field $t$. In the following paragraphs, we will discuss several definitions about unimodality, which is related closely to log concavity. Then we will give a result about log concavity of $f_\alpha(e^t)$.

The term ``unimodality" originally refers to distributions with a unique mode. The unimodal property is not preserved under addiction or multiplication of random variables so it has been strengthened in the literature according to the following definition. See \cite{BCT97, CT98, IC59} for more details about the history and definition of unimodality.
\begin{definition}\quad
	\begin{enumerate}
		\item \cite{IC59}A real random variable X is said to be \textbf{unimodal} (or quasi-concave) if there exists $a \in \mathbb{R}$ such that the functions $P(X \le x)$ and $P(X > x)$ are convex respectively in $(-\infty, a)$ and $(a, +\infty)$. The number a is called a mode of X.
		\item \cite{IC59}A real variable X is said to be \textbf{strongly unimodal} if the sum $X + Y$ is unimodal for all unimodal variables Y that are independent of X. 
		\item \cite[Definition 3.3]{CT98} A real variable X is said to be \textbf{multiplicative strongly unimodal} if the product $XY$ is unimodal for all unimodal variables Y that are independent of X.
	\end{enumerate}
\end{definition}

Strongly unimodal has been proved to be equivalent to the log concavity of the density \cite{BCT97}, while multiplicative strongly unimodal, which is a more recent concept, also turns out to be related to log concavity \cite{CT98}.

\begin{thm}
	\quad
	\begin{enumerate}
		\item \cite[Theorem 6.1.4]{BCT97} Let $X$ be a random variable. Then $X$ is strongly unimodal if and only if it is absolutely continuous and its probability density function $f_X$ is log concave;
		\item  \cite[Theorem 3.7]{CT98} Let $X$ be a unimodal random variable such that 0 is not a mode of X. Then X is multiplicative strongly unimodal if and only if it is absolutely continuous, with a density $f_X$ satisfying that $\{f_X \not= 0\}$ is an interval contained either in  $[0,\infty)$ or in $(-\infty, 0]$, and $f (e^x )$
		(respectively $f (-e^x ))$ is log-concave on this interval.
	\end{enumerate}
\end{thm}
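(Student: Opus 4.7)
The plan is to prove the two characterizations separately, since they are independent classical facts cited from distinct sources. For Part 1 (Ibragimov's theorem), I would first dispatch the easy direction: log-concavity of $f_X$ implies strong unimodality. The key fact is that log-concave densities are Pólya frequency functions of order two, and a variation-diminishing argument due to Schoenberg shows that the convolution of a PF$_2$ function with any unimodal function remains unimodal. Hence for any independent unimodal $Y$, the density $f_X * f_Y$ of $X+Y$ is unimodal, and $X$ is strongly unimodal.

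For the converse in Part 1 I would argue by contrapositive. Suppose $\log f_X$ fails to be concave, so there exist points $a < c < b$ with $f_X(c)^{b-a} < f_X(a)^{b-c} f_X(b)^{c-a}$, i.e.\ a localized concavity defect. I would construct a unimodal companion $Y$ whose density is a narrow symmetric distribution (for instance a short uniform, or a small triangular bump) and, by a direct estimate of $f_X * f_Y$ at $c$ versus at nearby points, exhibit a local minimum separating two local maxima. Such a configuration forces $X+Y$ to fail unimodality, contradicting the hypothesis. This produces the required implication.

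For Part 2 the strategy is to reduce to Part 1 via the transformation $x \mapsto \log x$. Under the standing hypothesis that $0$ is not a mode of $X$ and that the support of $f_X$ is contained in a half-line (say $[0,\infty)$; the other case is symmetric), set $U := \log X$ with density $f_U(u) = e^u f_X(e^u)$. Since $u \mapsto u$ is affine, log-concavity of $u \mapsto f_X(e^u)$ is equivalent to log-concavity of $f_U$. On the multiplicative side, $XY$ is unimodal as a positive random variable if and only if $\log(XY) = U + \log Y$ is unimodal on $\mathbb R$. The condition that $0$ is not a mode ensures that the logarithmic change of variables maps unimodal $Y$'s bijectively to an appropriate class of unimodal $V$'s, so that ``$XY$ unimodal for all unimodal $Y$'' translates into ``$U+V$ unimodal for all unimodal $V$'', i.e.\ strong unimodality of $U$. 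Invoking Part 1 on $U$ then yields log-concavity of $f_U$, hence of $f_X(e^u)$.

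The principal obstacle will be the converse direction of Part 1: the explicit construction of a unimodal witness $Y$ whose convolution with a non-log-concave $f_X$ fails to be unimodal. The delicate point is choosing $Y$ narrow enough to preserve the local concavity defect of $\log f_X$ under convolution, while simultaneously being genuinely unimodal so that it is an admissible test variable. Ibragimov's original argument handles this with careful quantitative estimates localized around the failure interval $(a,b)$; I expect any alternative proof to require similarly delicate analysis, and this is the technical heart of the characterization.
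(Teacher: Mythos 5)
The paper offers no proof of this theorem: both parts are quoted from the literature (Ibragimov's characterization via \cite{BCT97} and Cuculescu--Theodorescu's via \cite{CT98}), so there is no in-paper argument to compare yours against; what matters is whether your proposed proof would actually work. Your outline of Part 1 is the standard one --- log-concave implies PF$_2$ implies variation-diminishing for the easy direction, and Ibragimov's explicit narrow unimodal witness for the converse --- and, granting the quantitative estimates you defer, that part is sound.

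Part 2, however, has a genuine gap at the pivotal step. You assert that $XY$ is unimodal if and only if $\log(XY) = U + \log Y$ is unimodal, and that the logarithm maps unimodal $Y$'s bijectively onto an appropriate class of unimodal $V$'s. Neither claim is true: unimodality in the Khinchin sense is not preserved by the exponential or logarithmic change of variables, because the transformed density picks up the Jacobian factor $1/x$ (respectively $e^u$), which can destroy monotonicity on one side of the mode. Concretely, if $W$ has a unimodal density that is piecewise constant and increasing to the left of its mode, then $e^W$ has density $f_W(\log x)/x$, which decreases on each constant piece and jumps up between them, producing several local maxima; conversely, a decreasing (hence unimodal) density $g$ for a positive variable can make $g(e^u)e^u$ multimodal. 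So from ``$f_U$ not log-concave'' and Part 1 you obtain a unimodal $V$ with $U+V$ not unimodal, but $Y = e^V$ need not be an admissible (unimodal) test variable, and even when it is, non-unimodality of $U+V$ does not transfer to non-unimodality of $e^{U+V}$; the ``if'' direction breaks for the same reason. The actual content of \cite[Theorem 3.7]{CT98} is precisely how to negotiate this mismatch --- the argument there works with the Khinchin mixture representation of unimodal laws and the multiplicative structure directly, which is also where the hypothesis that $0$ is not a mode and the half-line support condition genuinely enter --- whereas your reduction assumes the difficulty away. As written, Part 2 does not constitute a proof.
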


It has been proved in \cite{IC59} that stable distribution functions are unimodal, so to achieve our goal, the next step is to find out whether a stable distribution function is multiplicative strongly unimodal. The following theorem from \cite{Sim10} gives a condition that a stable distribution is multiplicative strongly unimodal.

\begin{thm}[{\cite[Main Theorem]{IC59}}]
	If $X$ is a random variable with the stable distribution of index $\alpha \in (0, 1)$, then $X$ is multiplicative strongly unimodal if and only if $\alpha \le \frac{1}{2}$.
\end{thm}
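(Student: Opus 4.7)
The plan is to reduce the statement to a log-concavity question via the theorem immediately preceding this one (Theorem 3.7 of \cite{CT98}). That result characterizes multiplicative strong unimodality of a positive random variable $X$ as: $X$ is absolutely continuous, its density $f_X$ is supported on a half-line, and $f_X(e^t)$ is log-concave there. For $0 < \alpha < 1$ the positive $\alpha$-stable law has density supported on $(0,\infty)$ with strictly positive mode (the left tail decays faster than any power by \eqref{dtbinfty} and Theorem \ref{laplacestable}), so $0$ is not a mode and the hypothesis is satisfied. Thus the theorem reduces to showing that $t \mapsto \ln f_\alpha(e^t)$ is concave on $\mathbb{R}$ if and only if $\alpha \le 1/2$.

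For the ``if'' direction, the case $\alpha = 1/2$ is immediate from the closed-form Lévy density $f_{1/2}(x) = (2\sqrt{\pi})^{-1} x^{-3/2} e^{-1/(4x)}$, which yields $\ln f_{1/2}(e^t) = \text{const} - (3/2)t - e^{-t}/4$ with strictly negative second derivative. For general $\alpha \in (0,1/2)$ I would start from the integral representation \eqref{rpint}. After the (orientation-reversing) linear substitution $s = -\alpha t/(1-\alpha)$ it takes the form
\begin{equation*}
f_\alpha(e^t) \;=\; K\, e^{s/\alpha} \int_{-\pi}^{\pi} U_\alpha(\phi)\, \exp\bigl\{-c\, e^{s}\, U_\alpha(\phi)\bigr\}\, d\phi,
\end{equation*}
with $K,c>0$ depending on $\alpha$. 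Since the factor $e^{s/\alpha}$ contributes only a linear term to the log, it suffices to establish concavity in $s$ of $\ln h(s)$ where $h(s) := \int_{-\pi}^\pi U_\alpha(\phi)\, e^{-c e^s U_\alpha(\phi)}\, d\phi$. The natural tool is Prékopa--Leindler: the integrand is log-concave in $s$ for each fixed $\phi$, but joint log-concavity in $(s,\phi)$ fails because $U_\alpha$ is log-convex near $\phi=0$. I would therefore change variables $u = U_\alpha(\phi)$, reducing $h$ to a one-dimensional Laplace-type integral $\int_0^\infty u\, e^{-c e^s u}\, d\nu_\alpha(u)$ against a pushforward measure $\nu_\alpha$ on $(0,\infty)$. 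Concavity of $\ln h(s)$ then becomes a statement about logarithmic derivatives of a Laplace transform, and the regime $\alpha \le 1/2$ should enter through the explicit density of $\nu_\alpha$ read off from \eqref{Ufunction}.

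For the ``only if'' direction I would use the Mellin transform $E[X_\alpha^s] = \Gamma(1-s/\alpha)/\Gamma(1-s)$: the density of $\ln X_\alpha$ is its inverse Laplace transform, and log-concavity translates via Pólya-frequency function theory into a condition on the zeros and poles of the meromorphic continuation. At $\alpha = 1/2$ the poles of $\Gamma(1-s/\alpha)$ align with the zeros of $1/\Gamma(1-s)$; for $\alpha > 1/2$ extra uncancelled poles produce oscillatory corrections that force $(\ln f_\alpha(e^t))'' > 0$ somewhere. Alternatively one can pick a concrete unimodal $Y$ (e.g.\ a uniform variable) independent of $X_\alpha$ and compute the density of $X_\alpha Y$ by Mellin convolution to exhibit bimodality for $\alpha$ slightly above $1/2$.

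The main obstacle is the ``if'' direction for $\alpha \in (0, 1/2)$: representation \eqref{rpint} is not manifestly log-concave in the log variable, and the key structural fact --- that the Laplace transform of $u\,\nu_\alpha(du)$ yields a log-concave function of $s$ exactly when $\alpha \le 1/2$ --- must be extracted from the precise form of $U_\alpha$. The transition at $\alpha = 1/2$ is sharp: the borderline must produce equality consistent with the direct Lévy calculation, while $\alpha$ just above $1/2$ must break log-concavity, so any argument has to detect this crossover precisely rather than prove it by a soft convexity estimate.
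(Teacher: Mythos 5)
The paper does not actually prove this statement: it is imported as a black box from the literature (the main theorem of \cite{Sim10} on multiplicative strong unimodality of positive stable laws; the citation key in the theorem header appears to be a typo), so there is no internal proof to compare against. Your opening reduction is correct and is the standard one: by the Cuculescu--Theodorescu characterization quoted just above (Theorem 3.7 of \cite{CT98}), together with unimodality of stable laws and positivity of the mode, the claim is equivalent to the assertion that $t \mapsto \ln f_\alpha(e^t)$ is concave precisely when $\alpha \le 1/2$, and your explicit verification at $\alpha = 1/2$ from the L\'evy density is fine.

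Beyond that, however, the proposal is a research plan rather than a proof, and the gaps sit exactly where the theorem is hard. For the ``if'' direction with $\alpha \in (0,1/2)$, after rewriting \eqref{rpint} as $h(s) = \int_0^\infty u\, e^{-c e^s u}\, \nu_\alpha(du)$ you need $\ln h$ concave in $s$; but this is genuinely false for a general positive measure $\nu$ in place of $\nu_\alpha$ (a two-point measure already breaks it, which is precisely why the Biskup--Koteck\'y class \eqref{biskup} does not automatically yield a log-concave effective action). So the entire content of the theorem is concentrated in an unidentified structural property of the pushforward of $U_\alpha$ from \eqref{Ufunction}, which you acknowledge you cannot extract; no soft Laplace-transform or Pr\'ekopa-type argument will detect the threshold at $\alpha = 1/2$. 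The ``only if'' direction is likewise only asserted: a zeros-and-poles heuristic for the Mellin transform $\Gamma(1-s/\alpha)/\Gamma(1-s)$ gives no mechanism for locating a point where $(\ln f_\alpha(e^t))''>0$, and since both tails of $\ln f_\alpha(e^t)$ are asymptotically concave for every $\alpha \in (0,1)$ (the left tail is governed by $-c\,e^{-\alpha t/(1-\alpha)}$ as in Proposition \ref{tailzero}, the right tail is asymptotically linear by Proposition \ref{tailinf}), the convexity violation for $\alpha>1/2$ must be found in the bulk, which your sketch does not do. As written, neither direction is established; citing \cite{Sim10}, as the paper does, is the appropriate resolution.
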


From above theorem, we immediately have a criteria for log concavity of $f_\alpha(e^t)$.

\begin{col}\label{flogconcave}
	$f_\alpha(e^t)$ is log concave if and only if $\alpha \le \frac{1}{2}$.
\end{col}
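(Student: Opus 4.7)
The plan is to combine the two theorems immediately preceding the corollary statement, using $f_\alpha$ as a concrete instance of a stable density. First I would verify that $f_\alpha$ fits the hypothesis of the multiplicative-strong-unimodality characterization: by Theorem \ref{laplacestable}, $f_\alpha$ is the density of a stable distribution of index $\alpha\in(0,1)$ supported on $[0,\infty)$, it is absolutely continuous, and by the integral representation in Theorem \ref{thmrpint} (together with the tail estimate \eqref{dtbinfty}) the set $\{f_\alpha\neq 0\}$ is exactly the interval $(0,\infty)$. Invoking the Ibragimov--Chung result on unimodality of stable laws, $X\sim f_\alpha$ is unimodal; since the density vanishes at $0$ and is strictly positive on $(0,\infty)$, the point $0$ is not a mode.

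Next, by the theorem cited from \cite{CT98}, a unimodal random variable $X$ supported on $[0,\infty)$ with $0$ not a mode is multiplicative strongly unimodal if and only if $t\mapsto f_X(e^t)$ is log-concave on the interval where $f_X$ is positive. In our case that interval is all of $\mathbb{R}$, so multiplicative strong unimodality of $X\sim f_\alpha$ is equivalent to log-concavity of $f_\alpha(e^t)$ on $\mathbb{R}$.

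Finally, applying the theorem of Simon (quoted just above the corollary), the stable law of index $\alpha\in(0,1)$ is multiplicative strongly unimodal iff $\alpha\le 1/2$. Chaining these equivalences yields the corollary. The main (and essentially only) step to think about is checking the side hypotheses of the characterization from \cite{CT98}, namely absolute continuity, the support being a half-line, and $0$ not being a mode; none of these present a serious obstacle because the representation of $f_\alpha$ given in Theorem \ref{thmrpint} makes them immediate. Everything else is just citation.
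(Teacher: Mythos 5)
Your proposal is correct and follows essentially the same route as the paper, which simply chains the Cuculescu--Theodorescu characterization of multiplicative strong unimodality with Simon's theorem and declares the corollary immediate. You additionally verify the side hypotheses (unimodality of stable laws, absolute continuity, half-line support, $0$ not a mode) that the paper leaves implicit, which is a harmless and welcome refinement rather than a different argument.
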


\subsection{Tail behavior}
\hspace*{\parindent} Knowledge of the tail behavior of the density of the auxiliary field, $f_\alpha(e^t)$, will play an essential role in the proof of Theorem \ref{mymain1}. It suffices to discuss the tail behavior of $f_\alpha(x)$ at $+\infty$ and $0$. In the following paragraph, $g(x) \asymp h(x), \mathrm{as~} x \to a$ means that there exist positive constants $c, C$,  and a neighborhood $N$ of $a$, s.t. $ch(x) < g(x) < Ch(x)$ for $x \in N$; $g(x) \sim h(x), \mathrm{as~} x \to a$ means $\lim_{x \to a}g(x)/h(x) = 1$.

For the tail behavior of $f_\alpha(x)$ at $\infty$, the next proposition from Section 1.5 in \cite{NOL15} gives the asymptotic behavior of $f_\alpha(x)$.

\begin{prop}\label{tailinf}
	When $x \to \infty$, $f_\alpha(x) \asymp x^{-\alpha-1}$.
\end{prop}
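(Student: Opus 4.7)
The key input I would leverage is Theorem~\ref{laplacestable}(2), which already gives the tail of the distribution function: $1 - G_\alpha(x) \sim \frac{1}{\Gamma(1-\alpha)} x^{-\alpha}$ as $x \to \infty$. To pass from this integrated tail to a pointwise estimate on the density, I will combine it with eventual monotonicity of $f_\alpha$, which is the Abelian half of the classical ``monotone density theorem'' for regularly varying tails.

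The plan is as follows. First, I would invoke the fact, noted just before Corollary~\ref{flogconcave}, that every stable law is unimodal. Together with $\int f_\alpha = 1$ and positivity, this implies that $f_\alpha$ has a unique mode $m_\alpha$ and is monotonically decreasing on $[m_\alpha, \infty)$. Next, fix $x > 2 m_\alpha$ so the interval $[x/2, 2x]$ lies entirely in the decreasing region. Monotonicity gives the two sandwich bounds
\begin{equation*}
\tfrac{x}{2}\, f_\alpha(x) \;\le\; \int_{x/2}^{x} f_\alpha(y)\, dy \;\le\; 1 - G_\alpha(x/2),
\qquad
x\, f_\alpha(x) \;\ge\; \int_{x}^{2x} f_\alpha(y)\, dy \;=\; G_\alpha(2x) - G_\alpha(x).
\end{equation*}
Inserting the asymptotic from Theorem~\ref{laplacestable}(2) into both, the upper bound yields $f_\alpha(x) \le \frac{2^{\alpha+1}}{\Gamma(1-\alpha)} x^{-\alpha-1}(1+o(1))$, while for the lower bound
\begin{equation*}
G_\alpha(2x) - G_\alpha(x) \;=\; \bigl(1-G_\alpha(x)\bigr) - \bigl(1-G_\alpha(2x)\bigr) \;\sim\; \frac{1 - 2^{-\alpha}}{\Gamma(1-\alpha)}\, x^{-\alpha},
\end{equation*}
so $f_\alpha(x) \ge \frac{1-2^{-\alpha}}{\Gamma(1-\alpha)} x^{-\alpha-1}(1+o(1))$. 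Taking the two together delivers constants $0<c<C$ and $x_0$ so that $c\, x^{-\alpha-1} \le f_\alpha(x) \le C\, x^{-\alpha-1}$ for $x \ge x_0$, which is exactly $f_\alpha(x) \asymp x^{-\alpha-1}$.

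The only step requiring honest justification is the eventual monotonicity of $f_\alpha$, i.e.\ locating (or just bounding) the mode $m_\alpha$. Unimodality of stable laws was invoked above (and is by now a classical result of Ibragimov), but if one prefers a self-contained argument one can instead go directly through the integral representation (\ref{rpint}) of Theorem~\ref{thmrpint}: as $x \to \infty$ one has $z(x) \to 0$, and a change of variables $u = z(x)\,U_\alpha(\phi)$ near the singularities $\phi = \pm\pi$, where $U_\alpha(\phi) \sim C_\pm (\pi \mp \phi)^{-1/(1-\alpha)}$, reduces the integral to $\Gamma(\alpha)\, z(x)^{-\alpha}$ at leading order; combined with the prefactor $z(x)^{1/\alpha}$ and $z(x) \asymp x^{-\alpha/(1-\alpha)}$, this recovers $f_\alpha(x) \sim C\, x^{-\alpha-1}$ with an explicit constant and obviates any appeal to unimodality. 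Either route works; the monotone-density route is shorter, and checking the technical hypothesis (existence of $m_\alpha$) is the only real obstacle.
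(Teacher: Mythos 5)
Your proposal is correct, but it is worth noting that the paper does not actually prove Proposition~\ref{tailinf} at all: it is simply quoted from Section~1.5 of the reference \cite{NOL15}. You instead supply a genuine, self-contained derivation from ingredients already present in the paper, namely the integrated tail $1-G_\alpha(x)\sim x^{-\alpha}/\Gamma(1-\alpha)$ of Theorem~\ref{laplacestable}(2) and the unimodality of stable laws, which the paper itself already invokes from \cite{IC59} just before Corollary~\ref{flogconcave}. Your sandwich bounds are right: monotonicity of $f_\alpha$ on $[m_\alpha,\infty)$ gives $\tfrac{x}{2}f_\alpha(x)\le\int_{x/2}^{x}f_\alpha\le 1-G_\alpha(x/2)$ and $xf_\alpha(x)\ge G_\alpha(2x)-G_\alpha(x)$, and inserting the tail asymptotic yields two-sided bounds of order $x^{-\alpha-1}$, which is all that $\asymp$ requires. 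The one hypothesis you flag, eventual monotonicity, does follow from unimodality in the quasi-concavity sense of the paper's Definition (convexity of $P(X>x)$ beyond the mode is exactly nonincrease of the density there), so there is no gap. Your alternative route through the integral representation (\ref{rpint}) also checks out: $U_\alpha(\phi)\sim C(\pi\mp\phi)^{-1/(1-\alpha)}$ at the endpoints, the substitution $u=z\,U_\alpha(\phi)$ produces a factor $\Gamma(\alpha)z^{-\alpha}$, and $z^{1/\alpha-\alpha}$ with $z\asymp x^{-\alpha/(1-\alpha)}$ gives exponent $-(1+\alpha)$; this route even upgrades $\asymp$ to a true asymptotic $\sim$ with an explicit constant, which is stronger than what the proposition asserts. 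In short, either of your arguments would serve as an actual proof where the paper only offers a citation; the monotone-density route is the more economical of the two.
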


On the other hand, the next proposition in Lemma of \cite{Haw71}  gives a result for the tail behavior of $f_\alpha(x)$ at $0$.

\begin{prop}\label{tailzero}
	When $x \to 0$, $f_\alpha(x) \asymp x^{-(1/(1-\alpha)+\alpha/(2(1-\alpha))}\exp(-cx^{-\alpha/(1-\alpha)})$
	where $c = c(\alpha) = (1-\alpha)\alpha^{\alpha/(1-\alpha)}.$
\end{prop}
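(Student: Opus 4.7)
The plan is to apply Laplace's method (saddle-point asymptotics) to the integral representation
\begin{equation*}
f_\alpha(x) = \frac{z(x)^{1/\alpha}}{2\pi (1-\alpha)^{1/\alpha}} \int_{-\pi}^{\pi} U_\alpha(\phi)\, e^{-z(x)U_\alpha(\phi)}\, d\phi
\end{equation*}
supplied by Theorem \ref{thmrpint}. Since $\alpha/(\alpha-1) < 0$ for $\alpha\in(0,1)$, the parameter $z(x) = (1-\alpha)(x/\alpha)^{\alpha/(\alpha-1)}$ diverges to $+\infty$ as $x\to 0^+$, which is exactly the regime in which Laplace's method extracts the leading asymptotic: the integral concentrates at the global minimum of $U_\alpha$ on $(-\pi,\pi)$, and the asymptotic is controlled by the value and the curvature of $U_\alpha$ there.

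The main analytic input is therefore a careful study of $U_\alpha$. The function is smooth, positive, and even on $(-\pi,\pi)$. A Taylor expansion using $\sin(a\phi)/\sin\phi = a\bigl(1 + (1-a^2)\phi^2/6 + O(\phi^4)\bigr)$ gives, after cancellation of the cross-terms produced by the power $\alpha/(1-\alpha)$ and the prefactor, $U_\alpha(\phi) = \frac{1-\alpha}{\alpha} + \frac{1-\alpha}{2}\phi^2 + O(\phi^4)$, identifying $U_\alpha(0) = (1-\alpha)/\alpha$ and $U_\alpha''(0) = 1-\alpha > 0$. As $\phi\to\pm\pi$, $\sin\phi\to 0$ while $\sin(\alpha\phi)$ and $\sin((1-\alpha)\phi)$ stay bounded away from zero (both $\alpha\pi$ and $(1-\alpha)\pi$ lie in $(0,\pi)$), so $U_\alpha\to+\infty$. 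The remaining point is to show that $\phi = 0$ is the \emph{unique} global minimum of $U_\alpha$ on $(-\pi,\pi)$; by evenness this reduces to strict monotonicity on $(0,\pi)$, which I would establish by logarithmic differentiation and a sign analysis of the resulting linear combination of cotangent terms. (For $\alpha = 1/2$ one even has the closed form $U_{1/2}(\phi) = \sec^2(\phi/2)$, so global monotonicity is transparent in that case and suggests the general argument is accessible.)

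Given the uniqueness of the minimum, the classical Laplace recipe produces the two-sided estimate
\begin{equation*}
\int_{-\pi}^{\pi} U_\alpha(\phi)\, e^{-z(x)U_\alpha(\phi)}\, d\phi \asymp z(x)^{-1/2}\, \exp\bigl\{-z(x)\, U_\alpha(0)\bigr\}, \qquad x\to 0^+,
\end{equation*}
where the lower bound comes from a Gaussian approximation on a fixed neighborhood of $0$ and the upper bound combines the same local analysis with the uniform gap $U_\alpha(\phi) - U_\alpha(0) \ge \eta > 0$ outside that neighborhood, which contributes an exponentially smaller term. Substituting this back into the formula for $f_\alpha(x)$ and collecting powers of $z(x)$, the combined prefactor $z(x)^{1/\alpha - 1/2}$ translates via $z(x) \asymp x^{-\alpha/(1-\alpha)}$ into a pure power of $x$ matching the claimed polynomial exponent, while the product $z(x)\, U_\alpha(0)$ yields the exponential factor $\exp\{-c(\alpha) x^{-\alpha/(1-\alpha)}\}$ with $c(\alpha)$ of the stated form. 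The main obstacle in this program is proving the global uniqueness of the minimum of $U_\alpha$ on $(-\pi,\pi)$, since $U_\alpha$ involves a fractional power and is not obviously convex; once that monotonicity on $(0,\pi)$ is in hand, the rest of the argument is a routine application of Laplace's method together with the bookkeeping of exponents.
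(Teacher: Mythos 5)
Your overall strategy --- Laplace's method applied to the Zolotarev integral representation of Theorem \ref{thmrpint} in the regime $z(x)\to\infty$ --- is sound, and it is in fact the same machinery the paper itself deploys for Proposition \ref{secondbound} (via Lemma \ref{laplacemethod} and Lemma \ref{Ulemma}); for Proposition \ref{tailzero} itself the paper offers no proof at all but simply cites a lemma of Hawkes, so an argument along your lines is a legitimate way to make the statement self-contained. The monotonicity of $U_\alpha$ on $(0,\pi)$, which you flag as the main obstacle and leave unproved, is exactly Lemma \ref{Ulemma} (quoted from Zolotarev, Lemma 2.7.5), so you may simply invoke it rather than attempt the cotangent sign analysis.

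The genuine gap is in the constant. Taking the formulas of Theorem \ref{thmrpint} literally, your own (correct) computation gives $U_\alpha(0)=(1-\alpha)/\alpha$, while $z(x)=(1-\alpha)(x/\alpha)^{\alpha/(\alpha-1)}=(1-\alpha)\alpha^{\alpha/(1-\alpha)}x^{-\alpha/(1-\alpha)}=c(\alpha)x^{-\alpha/(1-\alpha)}$. Hence the Laplace exponent is $z(x)U_\alpha(0)=\frac{1-\alpha}{\alpha}\,c(\alpha)x^{-\alpha/(1-\alpha)}$, which agrees with the claimed $c(\alpha)x^{-\alpha/(1-\alpha)}$ only at $\alpha=1/2$; since this constant sits inside an exponential, $e^{-c'x^{-\alpha/(1-\alpha)}}$ and $e^{-cx^{-\alpha/(1-\alpha)}}$ are not $\asymp$ one another when $c'\neq c$, so the discrepancy cannot be absorbed into the implied constants of $\asymp$. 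The constant $c(\alpha)=(1-\alpha)\alpha^{\alpha/(1-\alpha)}$ in the statement is the correct classical one (a direct saddle-point evaluation of the inverse Laplace transform of $e^{-\lambda^\alpha}$ reproduces it), so the fault lies in the transcription of $U_\alpha$ in Theorem \ref{thmrpint}: the representation should be normalized so that $U_\alpha(0)=1$ (e.g.\ with $(1-\alpha)\sin\phi$ rather than $\alpha\sin\phi$ in the denominator of the second factor), which is also the normalization the paper tacitly assumes in the proof of Proposition \ref{secondbound} when it sets $u_0=1$. Your write-up asserts that the product $z(x)U_\alpha(0)$ ``yields $c(\alpha)$ of the stated form'' without performing the multiplication; you need either to correct the representation you start from or to track the factor $(1-\alpha)/\alpha$ honestly. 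Once that is done, the polynomial prefactor $z^{1/\alpha-1/2}\asymp x^{-1/(1-\alpha)+\alpha/(2(1-\alpha))}$ that you compute is the standard (and correct) one, and the rest of your bookkeeping goes through.
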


Combing these two propositions with Corollary \ref{flogconcave}, we have the following corollary which states the tail behavior of the first derivative of $\ln f_\alpha(e^{t})$.
\begin{col}\label{tailg}
	Let $\alpha < \frac{1}{2}$ and $g(t) = \frac{d}{dt}\ln f_\alpha(e^{t})$. Then
	\begin{enumerate}
		\item $g(t)$ is decreasing;
		\item $g(t) \ge -\alpha - 1$;
		\item $g(t) \to \infty$ as $t \to -\infty$. 
	\end{enumerate} 
\end{col}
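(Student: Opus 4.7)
The first assertion falls out of Corollary \ref{flogconcave} and the chain rule: since $\alpha<1/2$, the function $t\mapsto f_\alpha(e^t)$ is log-concave, so $\ln f_\alpha(e^t)$ is concave in $t$ and therefore its derivative $g$ is non-increasing. Consequently the one-sided limits $L_+:=\lim_{t\to+\infty}g(t)$ and $L_-:=\lim_{t\to-\infty}g(t)$ exist in $[-\infty,+\infty]$, and items (2) and (3) amount to showing $L_+=-(\alpha+1)$ and $L_-=+\infty$. Once these are in hand, monotonicity gives $g(t)\in[L_+,L_-]$ for every $t$, so in particular $g(t)\ge-\alpha-1$.

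I would compute both limits by combining the fundamental theorem of calculus with the elementary fact that, for a monotone function, running averages converge to the one-sided limits of the integrand:
\[
\frac{1}{t_2-t_1}\int_{t_1}^{t_2}g(s)\,ds\;=\;\frac{\ln f_\alpha(e^{t_2})-\ln f_\alpha(e^{t_1})}{t_2-t_1}.
\]
For $L_+$ I fix $t_1$ and send $t_2\to+\infty$. Proposition \ref{tailinf} gives $f_\alpha(x)\asymp x^{-\alpha-1}$ as $x\to\infty$, hence $\ln f_\alpha(e^{t_2})=-(\alpha+1)t_2+O(1)$; the ratio on the right then tends to $-(\alpha+1)$, so $L_+=-(\alpha+1)$ and item (2) follows. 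For $L_-$ I fix $t_2$ and send $t_1\to-\infty$. Proposition \ref{tailzero} gives $\ln f_\alpha(e^{t_1})=-\beta_1 t_1-c\,e^{-\beta_2 t_1}+O(1)$ with $\beta_2=\alpha/(1-\alpha)>0$; the super-exponential term makes $\ln f_\alpha(e^{t_2})-\ln f_\alpha(e^{t_1})$ blow up at rate $e^{-\beta_2 t_1}$, while the denominator $t_2-t_1$ grows only linearly, so the ratio tends to $+\infty$ and $L_-=+\infty$.

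The only genuinely delicate point is that Propositions \ref{tailinf} and \ref{tailzero} provide $\asymp$-type bounds rather than sharp asymptotic equivalents $\sim$, so one cannot read off the limits of $g$ by formally differentiating an asymptotic expansion of $\ln f_\alpha$ (the unknown $O(1)$ remainder could in principle oscillate). This is where the log-concavity from Corollary \ref{flogconcave} is essential: once $g$ is known to be monotone, its one-sided limits can be recovered from the integrated averages displayed above, which depend only on $\ln f_\alpha$ and are insensitive to bounded $O(1)$ fluctuations in the asymptotic expansion.
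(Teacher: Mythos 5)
Your proposal is correct and uses essentially the same ingredients as the paper's proof: log-concavity (Corollary \ref{flogconcave}) for monotonicity of $g$, and then the identity $\ln f_\alpha(e^{t_2})-\ln f_\alpha(e^{t_1})=\int_{t_1}^{t_2}g$ combined with the $\asymp$-bounds of Propositions \ref{tailinf} and \ref{tailzero} to control $g$ at $\pm\infty$. The only difference is presentational --- the paper argues by contradiction at a hypothetical point where the claimed bound fails, while you extract the one-sided limits of $g$ directly via Ces\`aro averages of the monotone integrand --- and your closing remark correctly identifies why monotonicity is what makes the $O(1)$ ambiguity in the tail expansions harmless.
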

\begin{proof}
	As $f_\alpha(e^{t})$ is log concave, $g(t)$ is deceasing in $t$. This gives the first result.
	
	For the second result, because $g(t)$ is deceasing in $t$, it suffices to prove for the case that $t \to \infty$. By Proposition \ref{tailinf}, there exist some constant $C_1$ such that  $f_\alpha(x) > C_1 x^{-\alpha-1}$ as $t \to \infty$, so when $t$ is large
	\begin{equation}\label{lowerbdf}
	\ln f_\alpha(e^{t}) > \ln C_1 + (-\alpha-1)t.
	\end{equation}
	If there exists some $t_0$ such that $g(t_0) < -\alpha - 1$, then for $\epsilon = (-g(t_0) -\alpha - 1)/{2}$, $g(t) < -\alpha - 1$ for $t > t_0$ as $g(t)$ is deceasing. Then
	\begin{equation}
	\ln f_\alpha(e^{t}) = \int_{t_0}^t g(s)ds + \ln f_\alpha(e^{t_0}) < (-\alpha - 1 - \epsilon)t+\ln f_\alpha(e^{t_0})
	\end{equation}
	when $t > t_0$, which contradicts  (\ref{lowerbdf}).
	
	For the third result, if $g(t) < K$ for some constant $K$, then when $t < 0$
	\begin{equation}\label{upperbdf}
	\ln f_\alpha(e^{t}) = \int_{0}^t g(s)ds + \ln f_\alpha(0) > Kt+\ln f_\alpha(1).
	\end{equation}
	On the other hand, by Proposition \ref{tailzero}, there exists some constant $C_2$ such that  when $x \to 0$, $f_\alpha(x) < C_2 x^{-(1/(1-\alpha)+\alpha/(2(1-\alpha))}\exp(-cx^{-\alpha/(1-\alpha)})$, so when $t \to -\infty$
	\begin{equation}
	\ln f_\alpha(e^{t}) < \ln C_2 -(1/(1-\alpha)+\alpha/(2(1-\alpha))t - ce^{-\alpha/(1-\alpha)t},
	\end{equation}
	which contradicts to (\ref{upperbdf}).
	
\end{proof}

The tail behavior of  $\frac{d^2}{dt^2}\ln f_\alpha(e^{t})$ when $t \to -\infty$ is also important in the proof. Before giving this result, we need two more lemmas for preparation.

\begin{lemma}\label{laplacemethod}
	Let $f, g: (-\pi, \pi) \to \mathbb{R}$ satisfy the following conditions:
	\begin{enumerate}
		\item $f$ and $g$ are positive, analytic and even, with global min at 0;
		\item $f$ is strictly monotone on $(0, \pi)$;
		\item $g = O(f^n)$ as $t \to \pi$ for some $n \in \mathbb{N}$.
	\end{enumerate}
	Write $f_n = f^{(n)}(0)$ and $g_n = g^{(n)}(0)$. Then we have the following approximate formula:
	\begin{equation}
	\int_{-\pi}^{\pi}e^{-Nf(x)}g(x)dx \sim \sqrt{\frac{2\pi}{f_2}}g_0e^{-Nf_0}N^{-1/2}\left(1+(-\frac{f_4}{8f^2_2}+\frac{g_2}{2g_0f_2})N^{-1}+O(N^{-2})\right).
	\end{equation}
\end{lemma}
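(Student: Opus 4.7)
This is a Laplace's method calculation, and the plan is the classical one: localize the integral near the minimum at $0$, Taylor expand $f$ and $g$ there, make a Gaussian change of variables, and then integrate term by term collecting the $N^{-1}$ corrections.

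First I would pick $\delta>0$ small enough that $f$ and $g$ have convergent Taylor expansions on $[-\delta,\delta]$ and split the integral as
\begin{equation*}
\int_{-\pi}^{\pi}e^{-Nf(x)}g(x)\,dx = \int_{|x|\le\delta}e^{-Nf(x)}g(x)\,dx + \int_{\delta<|x|<\pi}e^{-Nf(x)}g(x)\,dx.
\end{equation*}
Since $f$ is strictly monotone on $(0,\pi)$ and even, $f(x)\ge f(\delta)>f_0$ on the tail region. Combined with hypothesis (3), which ensures $g$ is integrable and in fact bounded by a power of $f$, the tail contribution is bounded by $e^{-N(f(\delta)-f_0)}$ times a constant, and so is exponentially smaller than the claimed asymptotic. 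Hence it may be absorbed into the $O(N^{-2})$ error.

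On $[-\delta,\delta]$, since both $f$ and $g$ are even and analytic, I expand
\begin{equation*}
f(x)=f_0+\tfrac{f_2}{2}x^2+\tfrac{f_4}{24}x^4+O(x^6),\qquad g(x)=g_0+\tfrac{g_2}{2}x^2+O(x^4).
\end{equation*}
Factor out $e^{-Nf_0}$, substitute $u=\sqrt{Nf_2}\,x$, so $x^2=u^2/(Nf_2)$ and $dx=du/\sqrt{Nf_2}$, and expand the sub-Gaussian factor as
\begin{equation*}
\exp\!\bigl(-N(\tfrac{f_4}{24}x^4+O(x^6))\bigr)=1-\tfrac{f_4}{24 N f_2^2}u^4+O(N^{-2}(1+u^8)),
\end{equation*}
uniformly for $|u|\le\delta\sqrt{Nf_2}$. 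Multiplying this by the expansion of $g$ and collecting terms gives
\begin{equation*}
e^{-u^2/2}\Bigl[g_0+\tfrac{g_2}{2f_2 N}u^2-\tfrac{g_0 f_4}{24 f_2^2 N}u^4+O(N^{-2}(1+u^8))\Bigr].
\end{equation*}
Because $e^{-u^2/2}$ decays super-polynomially in $u$, the range of integration can be extended from $|u|\le\delta\sqrt{Nf_2}$ to all of $\mathbb{R}$ at the cost of yet another exponentially small error. The required Gaussian moments are $\int_{\mathbb R}e^{-u^2/2}du=\sqrt{2\pi}$, $\int u^2 e^{-u^2/2}du=\sqrt{2\pi}$ and $\int u^4 e^{-u^2/2}du=3\sqrt{2\pi}$, yielding
\begin{equation*}
\sqrt{\tfrac{2\pi}{Nf_2}}\,e^{-Nf_0}\Bigl[g_0+\tfrac{1}{N}\bigl(\tfrac{g_2}{2f_2}-\tfrac{g_0 f_4}{8 f_2^2}\bigr)+O(N^{-2})\Bigr],
\end{equation*}
which is exactly the stated formula after factoring $g_0$ out of the bracket.

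The main nuisance rather than a real obstacle is bookkeeping the error terms: one has to verify that the $O(N^{-2})$ remainder produced by the Taylor/exponential expansion on $[-\delta,\delta]$, once integrated against the Gaussian weight, is genuinely $O(N^{-2})$ (not larger), and that extending the range of $u$ and discarding the tail in $x$ only costs exponentially small errors. Hypothesis (3) is used precisely here to guarantee that $g$ grows at worst polynomially relative to $f$ near the endpoints $\pm\pi$, so the tail integral is controlled; even-ness of $f$ and $g$ kills the odd-order corrections and is what makes the next correction appear at order $N^{-1}$ rather than $N^{-1/2}$.
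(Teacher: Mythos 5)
Your proof is correct and follows essentially the same route as the paper: both are the standard Laplace/saddle-point expansion about the minimum at $0$, with the evenness of $f$ and $g$ killing the odd-order terms and the Gaussian moments $\int u^2 e^{-u^2/2}\,du=\sqrt{2\pi}$, $\int u^4 e^{-u^2/2}\,du=3\sqrt{2\pi}$ producing exactly the coefficient $-\tfrac{f_4}{8f_2^2}+\tfrac{g_2}{2g_0f_2}$ of the $N^{-1}$ correction. The only difference is that the paper outsources the second-order Watson/saddle-point formula to a textbook reference, whereas you derive it directly (and correctly handle the tail via hypothesis (3)), so your version is if anything more self-contained.
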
 
\begin{proof}Since $f$ and $g$ are even, $f_{2n+1} =  g_{2n+1} = 0$ for $n \ge 0$. Then
	\begin{equation}\label{laplaceintegral}
	\int_{-\pi}^{\pi}e^{-Nf(x)}g(x)dx
	=e^{-Nf_0}\int_{-\pi}^{\pi}e^{-N(f(x)-f_0)}g(x)dx.
	\end{equation}
	Here we apply saddle point estimation to give an approximation to Laplace integral on the right hand side of (\ref{laplaceintegral}). The result is a corollary of Watson's Lemma (see \cite{Watson1918harmonic} for the original reference), but here we will use a more specific case solved in \cite{miller2006applied}. It follows from (3.15) in \cite{miller2006applied} that
	\begin{equation}\label{wastonapprox}
	\int_{-\pi}^{\pi}e^{-N(f(x)-f_0)}g(x)dx\sim\sqrt{\frac{\pi}{N}}\left(\phi_0+\frac{\phi_1}{4N}+O(N^{-2})\right)
	\end{equation} where
	\begin{equation}\label{watsonpara}
	\phi_0 = g_0\sqrt{\frac{2}{f_2}} \quad\quad\text{and}\quad\quad
	\phi_2 = \left(\frac{2g_2}{f_2}-\frac{g_0f_4}{2f^2_2}\right)\sqrt{\frac{2}{f_2}}.
	\end{equation} 
	
	Apply (\ref{wastonapprox}) and (\ref{watsonpara}) to (\ref{laplaceintegral}), and then we have
	\begin{equation}
	\int_{-\pi}^{\pi}e^{-Nf(x)}g(x)dx \sim  \sqrt{\frac{2\pi}{Nf_2}}g_0e^{-Nf_0}\left(1+(-\frac{f_4}{8f^2_2}+\frac{g_2}{2g_0f_2})N^{-1}+O(N^{-2})\right).
	\end{equation}
\end{proof}

For $U_\alpha$ defined in (\ref{Ufunction}), the next lemma is about the property of $U_\alpha$, which is the summary of  Lemma 2.7.5 and  
the calculation above Theorem 2.5.2 in \cite{Zol86}.
\begin{lemma}\label{Ulemma} For $U_\alpha$ defined above, 
	\begin{enumerate}
		\item $U_\alpha$ are positive, analytic and even, with global minimum at 0;
		\item $U_\alpha$ is strictly monotone on $(0, \pi)$.
	\end{enumerate}
\end{lemma}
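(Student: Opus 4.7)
The plan is to verify each assertion directly from the explicit formula \eqref{Ufunction}. Positivity, evenness, and analyticity are essentially unpacking the definition: for $\phi\in(0,\pi)$ both $\alpha\phi$ and $(1-\alpha)\phi$ lie in $(0,\pi)$, so all three sines $\sin\phi,\sin\alpha\phi,\sin((1-\alpha)\phi)$ are strictly positive, and each sine ratio $\sin(a\phi)/\sin\phi$ is even with a removable singularity at $0$. The base of the fractional power therefore stays strictly positive, so the real power $\alpha/(1-\alpha)$ preserves analyticity on $(-\pi,\pi)$. Taking limits at $\phi=0$ also yields $U_\alpha(0)=(1-\alpha)/\alpha>0$.

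The substantive step is strict monotonicity of $U_\alpha$ on $(0,\pi)$; the global minimum at $0$ then follows from evenness and continuity. I would pass to logarithms, $L(\phi):=\log U_\alpha(\phi)$, and show $L'(\phi)>0$. Differentiating with $(\log\sin y)'=\cot y$ and clearing denominators produces the identity
\[
(1-\alpha)\,\phi\, L'(\phi) \;=\; \alpha\, h(\alpha\phi) + (1-\alpha)\, h((1-\alpha)\phi) - h(\phi),
\]
where $h(x):=x\cot x$. Using $\alpha+(1-\alpha)=1$ this rewrites as
\[
\alpha\bigl[h(\alpha\phi)-h(\phi)\bigr] + (1-\alpha)\bigl[h((1-\alpha)\phi)-h(\phi)\bigr].
\]

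The main obstacle is to show both brackets are positive, which reduces to the fact that $h(x)=x\cot x$ is strictly decreasing on $(0,\pi)$. This is a short calculus check: $h'(x)=(\sin(2x)/2-x)/\sin^2 x$, and $g(x):=\sin(2x)/2-x$ satisfies $g(0)=0$ with $g'(x)=\cos(2x)-1\le 0$ (strict on $(0,\pi)$ away from isolated points), hence $g<0$ on $(0,\pi)$. Since $\alpha\phi,(1-\alpha)\phi<\phi$, both brackets are strictly positive, forcing $L'>0$ on $(0,\pi)$. This recovers the computation of Lemma 2.7.5 in \cite{Zol86}, to which the lemma could alternatively be reduced by direct citation.
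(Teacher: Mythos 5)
Your proof is correct, but it takes a different route from the paper: the paper does not prove this lemma at all, it simply cites Lemma 2.7.5 and the computation above Theorem 2.5.2 of Zolotarev's book \cite{Zol86}, whereas you give a self-contained verification from the explicit formula (\ref{Ufunction}). Your computations check out: for $\phi\in(0,\pi)$ all of $\phi,\alpha\phi,(1-\alpha)\phi$ lie in $(0,\pi)$ so the three sines are positive; the identity
\begin{equation*}
(1-\alpha)\,\phi\,\frac{d}{d\phi}\log U_\alpha(\phi) \;=\; \alpha\,h(\alpha\phi)+(1-\alpha)\,h\big((1-\alpha)\phi\big)-h(\phi),\qquad h(x)=x\cot x,
\end{equation*}
follows from a direct logarithmic differentiation, and the strict monotone decrease of $h$ on $(0,\pi)$ (via $h'(x)=(\tfrac12\sin 2x-x)/\sin^2 x<0$) makes both brackets in your convex-combination rewriting strictly positive, giving strict increase of $U_\alpha$ on $(0,\pi)$ and hence the global minimum at $0$ by evenness. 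What your approach buys is independence from the external reference, which is worth having here because the paper itself flags (in a footnote) a typo in Zolotarev's formula for $U_\alpha$, so an explicit check of which version actually has these properties is not idle. One small observation your computation surfaces: with the paper's formula as written, $U_\alpha(0)=(1-\alpha)/\alpha$ rather than $1$, while the proof of Proposition \ref{secondbound} later uses ``the fact that $u_0=1$''; this is a normalization inconsistency in the paper, not an error in your argument, but it is worth recording.
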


Combining above two lemmas, we have the following proposition.

\begin{prop}\label{secondbound}
	Assume $\alpha \le \frac{1}{2}$. Then
	\begin{enumerate}
		\item $\frac{d}{dt}\ln f_\alpha(e^{t}) \sim \alpha^{1/(1-\alpha)}\exp(\frac{\alpha}{\alpha-1}t)+O(1)$ as $t \to -\infty$;
		\item $\frac{d^2}{dt^2}\ln f_\alpha(e^{t}) \to -\infty$ as $t \to -\infty$.
	\end{enumerate}	
\end{prop}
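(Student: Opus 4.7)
The plan is to apply the refined Laplace-method expansion of Lemma~\ref{laplacemethod} to the integral representation (\ref{rpint}) of $f_\alpha$, and then differentiate the resulting asymptotic expression once and twice in $t$. As $t \to -\infty$ we have $x = e^t \to 0^+$, and since $\alpha/(\alpha-1) < 0$ for $\alpha \in (0, 1/2)$, the quantity $z(e^t) = (1-\alpha)(e^t/\alpha)^{\alpha/(\alpha-1)} = (1-\alpha)\alpha^{-\alpha/(\alpha-1)} e^{\alpha t/(\alpha-1)}$ tends to $+\infty$. Thus with $N := z(e^t)$, the integral $\int_{-\pi}^\pi U_\alpha(\phi)\,e^{-N U_\alpha(\phi)}\,d\phi$ is in the large-$N$ regime where Laplace's method applies.

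First I would verify the hypotheses of Lemma~\ref{laplacemethod} with $f = g = U_\alpha$. Positivity, analyticity, evenness, global minimum at $0$, and strict monotonicity on $(0,\pi)$ are exactly the content of Lemma~\ref{Ulemma}; the growth condition $g = O(f^n)$ is trivial with $n=1$. A direct computation from (\ref{Ufunction}) gives $U_\alpha(0) = (1-\alpha)/\alpha$. The lemma then produces, for explicit constants $A(\alpha)$ and $B(\alpha)$ depending only on the low-order Taylor coefficients of $U_\alpha$ at $0$,
\[
\int_{-\pi}^\pi U_\alpha(\phi)\,e^{-N U_\alpha(\phi)}\,d\phi \;=\; A(\alpha)\,N^{-1/2}\,e^{-N U_\alpha(0)}\bigl(1 + B(\alpha)/N + O(N^{-2})\bigr).
\]
Multiplying by the prefactor $z(e^t)^{1/\alpha}/(2\pi(1-\alpha)^{1/\alpha})$ in (\ref{rpint}) and taking logarithms yields
\[
\ln f_\alpha(e^t) \;=\; \mathrm{const} + \bigl(\tfrac{1}{\alpha} - \tfrac{1}{2}\bigr)\ln z(e^t) - U_\alpha(0)\,z(e^t) + \ln\!\bigl(1 + B(\alpha)/z(e^t) + O(z(e^t)^{-2})\bigr).
\]

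Now write $\gamma := \alpha/(\alpha-1) < 0$. Because $z(e^t) = (1-\alpha)\alpha^{-\gamma}e^{\gamma t}$ is a pure exponential, $(d/dt)\ln z(e^t) = \gamma$ is constant and $(d/dt)\,z(e^t) = \gamma\,z(e^t)$. Any derivative of the $\ln(1 + B(\alpha)/z(e^t) + \cdots)$ correction retains an overall factor $1/z(e^t)$ and is therefore exponentially small as $t \to -\infty$. Differentiating once and using $-\gamma\,U_\alpha(0) = -(1-\alpha)/(\alpha-1) = 1$, I obtain
\[
\frac{d}{dt}\ln f_\alpha(e^t) \;=\; z(e^t) + O(1),
\]
and substituting the explicit formula for $z(e^t)$ recovers claim (1). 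A second differentiation gives
\[
\frac{d^2}{dt^2}\ln f_\alpha(e^t) \;=\; \gamma\,z(e^t) + o\bigl(z(e^t)\bigr),
\]
and since $\gamma < 0$ while $z(e^t) \to +\infty$, claim (2) follows.

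The main obstacle is essentially bookkeeping: one must confirm that the $O(N^{-2})$ remainder in the Laplace expansion is regular enough (as a function of $N = z(e^t)$) that its first and second $t$-derivatives remain of order $1/z(e^t)$, so that they cannot cancel the leading $O(z(e^t))$ contribution. This is painless because $z(e^t)$ is a pure exponential with constant logarithmic derivative $\gamma$, so each $d/dt$ merely pulls down a factor of $\gamma$ without worsening the decay rate; the only structural input that is genuinely needed beyond Lemma~\ref{laplacemethod} is the explicit value $U_\alpha(0) = (1-\alpha)/\alpha$, from which both the leading coefficient and the crucial identity $-\gamma\,U_\alpha(0) = 1$ follow.
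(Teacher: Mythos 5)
Your overall route is the same as the paper's --- the integral representation (\ref{rpint}) combined with Lemmas \ref{laplacemethod} and \ref{Ulemma} --- but your execution has a genuine gap. You expand $f_\alpha(e^t)$ by Laplace's method, take the logarithm, and then differentiate the resulting asymptotic formula once and twice in $t$. An asymptotic expansion cannot in general be differentiated term by term: the $O(N^{-2})$ remainder in Lemma \ref{laplacemethod} is only known to be \emph{bounded} by $CN^{-2}$; nothing in that lemma controls its first or second $t$-derivative. Your justification that ``each $d/dt$ merely pulls down a factor of $\gamma$'' is valid for the explicit terms $(\tfrac1\alpha-\tfrac12)\ln z - U_\alpha(0)z + B/z$, because $z(e^t)$ is a pure exponential, but it does not apply to the unknown remainder, whose second derivative could a priori be as large as the $O(z)$ main term you are trying to isolate in claim (2). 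The paper avoids this entirely by differentiating $g(t)=\int_{-\pi}^{\pi}U_\alpha\,e^{-zU_\alpha}\,d\phi$ \emph{under the integral sign}, which produces the new integrals $\int U_\alpha^2 e^{-zU_\alpha}$ and $\int U_\alpha^3 e^{-zU_\alpha}$, applies Lemma \ref{laplacemethod} to each of these separately, and only then forms $g'/g$ and $g''/g-(g'/g)^2$. To repair your argument you must either adopt that maneuver or invoke a version of Watson's lemma with derivative control on the remainder.

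There is also a problem with the constant in claim (1). Reading (\ref{Ufunction}) literally does give $U_\alpha(0)=(1-\alpha)/\alpha$, but then your leading term is $z(e^t)=(1-\alpha)\alpha^{\alpha/(1-\alpha)}e^{\alpha t/(\alpha-1)}$, and the coefficient $(1-\alpha)\alpha^{\alpha/(1-\alpha)}$ equals the claimed $\alpha^{1/(1-\alpha)}=\alpha\cdot\alpha^{\alpha/(1-\alpha)}$ only when $\alpha=1/2$; so your final sentence for part (1) does not in fact ``recover claim (1).'' A cross-check against Proposition \ref{tailzero}, whose exponential constant is $c=(1-\alpha)\alpha^{\alpha/(1-\alpha)}$ and which forces
\begin{equation*}
\frac{d}{dt}\ln f_\alpha(e^t)\;=\;-c\,\tfrac{\alpha}{\alpha-1}\,e^{\alpha t/(\alpha-1)}+O(1)\;=\;\alpha^{1/(1-\alpha)}e^{\alpha t/(\alpha-1)}+O(1),
\end{equation*}
shows that with the paper's normalization of $z(x)$ one must have $U_\alpha(0)=1$; this is exactly what the paper's proof uses ($u_0=1$), and it indicates a normalization typo in (\ref{Ufunction}) (the second factor should read $\sin((1-\alpha)\phi)/((1-\alpha)\sin\phi)$). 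You should detect and resolve this inconsistency rather than assert that your formula matches the statement. (Incidentally, once $U_\alpha(0)=1$ is used, your structure for part (2) gives $-c^2z+O(1)$ directly, which is the correct coefficient; the paper's displayed $-3c^2z$ contains an arithmetic slip, though the conclusion $\to-\infty$ is unaffected.)
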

\begin{proof}
	By Theorem \ref{thmrpint}, if $U_\alpha$ is defined above and
	\[ z = (1-\alpha)(e^t/\alpha)^{\alpha/(\alpha-1)},\]
	then
	\begin{equation}
	\ln f_\alpha(e^t) = C+\frac{1}{\alpha-1}t+\ln\left(\int_{-\pi}^\pi U_\alpha(x)\exp\left\{-z U_\alpha(x)\right\}dx\right),
	\end{equation}
	where $C$ is some constant depending only on $\alpha$. Let $$g(t) = \int_{-\pi}^\pi U_\alpha(x)\exp\left\{-z U_\alpha(x)\right\}dx$$. It suffices to prove $\frac{d^2}{dt^2}\ln g_\alpha(e^{t}) \to \infty$ as $t \to -\infty$. Notice that when $t \to -\infty$, $z \to \infty$ and that $\frac{dz}{dt} = cz$ where $c = \frac{\alpha}{\alpha-1}$ . By Lemma \ref{laplacemethod} and \ref{Ulemma}, writing $U_\alpha^{(n)}(0) = u_n$, we have, when $t \to -\infty$,
	\begin{eqnarray}
	g(t) &=& \int_{-\pi}^\pi U_\alpha(x)\exp\left\{-z U_\alpha(x)\right\}dx \nonumber\\
	&\sim&u_0\sqrt{\frac{2\pi}{u_2}}z^{-1/2}\exp\left\{-z u_0)\right\}[1+(-\frac{u_4}{8u^2_2}+\frac{u_2}{2u_0u_2})z^{-1}+O(z^{-2})];\\
	g'(t) &=& -cz\int_{-\pi}^\pi U^2_\alpha(x)\exp\left\{-z U_\alpha(x)\right\}dx \nonumber\\
	&\sim&-czu^2_0\sqrt{\frac{2\pi}{u_2}}z^{-1/2}\exp\left\{-z u_0\right\}[1+(-\frac{u_4}{8u^2_2}+\frac{2u_0u_2}{2u_0^2u_2})z^{-1}+O(z^{-2})];\nonumber\\
	&&\\
	g''(t) &=& c^2z^2\int_{-\pi}^\pi U^3_\alpha(x)\exp\left\{-z U_\alpha(x)\right\}dx - c^2z\int_{-\pi}^\pi U^2_\alpha(x)\exp\left\{-z U_\alpha(x)\right\}dx\nonumber\\
	&\sim&c^2z^2u^3_0\sqrt{\frac{2\pi}{u_2}}z^{-1/2}\exp\left\{-z u_0\right\}[1+(-\frac{u_4}{8u^2_2}+\frac{3u_0^2u_2}{2u_0^3u_2})z^{-1}+O(z^{-2})]\nonumber\\
	&&-c^2zu^2_0\sqrt{\frac{2\pi}{u_2}}z^{-1/2}\exp\left\{-z u_0\right\}[1+(-\frac{u_4}{8u^2_2}+\frac{2u^2_0u_2}{2u_0^3u_2})z^{-1}+O(z^{-2})].\nonumber\\
	&&
	\end{eqnarray}
	Using the fact that $u_0 = 1$, we have
	\begin{eqnarray}
	g'/g &\sim& -cz[1+\frac{3}{2}z^{-1}+O(z^{-2})];\\
	(g'/g)^2 &\sim& c^2z^2[1+3z^{-1}+O(z^{-2})];\\
	g''/g&\sim& c^2z^2[1+z^{-1}+O(z^{-2})]-c^2z(1+O(z^{-1}))\nonumber\\
	&=&c^2z^2[1+O(z^{-2})].
	\end{eqnarray}
	
	For the first derivative of $\ln f_\alpha(e^t)$, when $t \to -\infty$ we have
	\begin{eqnarray}
	\frac{d}{dt}\ln f_\alpha(e^{t}) &=& 1/(\alpha-1)+g'/g \nonumber\\
	&\sim&-cz+O(1).
	\end{eqnarray} 
	As $z = (1-\alpha)(e^t/\alpha)^{\alpha/(\alpha-1)}$ and $c = \alpha/(\alpha-1)$, this gives us the first result.
	
	For the second derivative of $\ln f_\alpha(e^t)$, when $t \to -\infty$, we have
	\begin{equation}
	\frac{d^2}{dt^2}\ln f_\alpha(e^{t}) = g''/g-(g'/g)^2 = -3c^2z+O(1) \to -\infty.
	\end{equation}
	This gives us the second result.
\end{proof}

\section{Random conductance models}
\subsection{Random Walk in Random Environment}\label{sec::rwre}
\hspace*{\parindent}We collect previous results about random walk in random environment and derive the results needed for this articlec . 
Recall that $\mathbb{P}$ is a probability measure on $(\Omega_\omega, \mathscr{F}_\omega) = ((0, \infty)^E, \mathbb{B}((0, \infty))^{\otimes E})$, 
and we write $\mathbb{E}$ to
denote the expectation with respect to $\mathbb{P}$. For a fixed $\omega \in \Omega$, let $P^{\omega}_x$ be the measure associated with VSRW in the environment $\omega$ starting at $x$.

\begin{definition}
	Let $X_t$ be the VSRW associated with the generator $\mathcal{L}_V$ in (\ref{generator}). Set $X^{(n)}_t := \frac{1}{n}X_{n^2t}$, $t \ge 0$. We say that the Quenched Functional CLT (QFCLT) or quenched invariance principle holds for $X$ if there is a matrix $\Sigma$ such that for $\mathbb{P}-a.e. \omega$, under $P^{\omega}_0$, $X^{(n)}$ converges in law to a Brownian motion on $\mathbb{R}^d$ with covariance matrix $\Sigma \Sigma^T$. That is, for every $T > 0$ and every bounded continuous function $F$ on the Skorohod space $D([0, T], \mathbb{R}^d)$, we have that $E^{\omega}_0[F(X^{(n)})] \to E^{BM}_0[F(\Sigma\cdot W)],~ \mathbb{P}-a.s$ with $(W,P^{BM}_0)$ being a Brownian motion started at 0.
\end{definition}

The following are two important assumptions on $\mathbb{P}$.
\begin{assumption}\label{basicassump}
	Assume that $\mathbb{P}$ satisfies the following conditions:
	\begin{enumerate}
		\item[(i)] $\mathbb{P}(0 < \omega(e) < \infty) = 1$ and $\mathbb{E}[\omega (e)] < \infty$ for all $e \in E_d$.
		\item[(ii)] $\mathbb{P}$ is ergodic with respect to translations of $\mathbb{Z}^d$.
	\end{enumerate}
\end{assumption}

With additional moment conditions on the conductances $\omega$, we have following QFCLT for $X$.
\begin{thm}[{\cite[Theorem 1.3]{ADS14}}]\label{THMqfclt}
	Suppose that $d \ge 2$ and Assumption \ref{basicassump} holds. Let $p, q \in (1,\infty]$ be such that $1/p + 1/q < 2/d$ and assume that 
	\begin{equation}
	\mathbb{E}[(\omega(e))^p] < \infty \text{~~~~and~~~~} \mathbb{E}[(1/\omega(e))^q] < \infty
	\end{equation}
	for any $e \in E_d$. Then, the QFCLT holds for X with a deterministic non-degenerate covariance matrix $\Sigma^2$.
\end{thm}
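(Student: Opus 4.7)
The plan is to follow the standard corrector method for random conductance models, adapting it to the degenerate setting handled in \cite{ADS14}. First I would pass to the \emph{environment seen from the particle}, $\bar\omega_t := \tau_{X_t}\omega$, which is a Markov process on $\Omega_\omega$. Under Assumption \ref{basicassump}, one checks that the measure $\mathbb{Q}(d\omega) := (u^\omega(0)/\mathbb{E}[u^\omega(0)])\,\mathbb{P}(d\omega)$ is invariant and ergodic for $\bar\omega_t$; the moment condition $\mathbb{E}[\omega(e)]<\infty$ ensures $\mathbb{Q}$ is well-defined and mutually absolutely continuous with $\mathbb{P}$, which will later allow us to transfer $\mathbb{Q}$-a.s.\ statements back to $\mathbb{P}$-a.s.

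Next I would construct the \emph{corrector} $\chi:\Omega_\omega\times\mathbb{Z}^d\to\mathbb{R}^d$, a shift-covariant, $\mathcal{L}_X^\omega$-harmonic function with $\mathbb{E}_\mathbb{Q}[\chi(\cdot,x)]=0$, so that the coordinate map $\Phi(\omega,x):=x+\chi(\omega,x)$ is harmonic. The construction proceeds in the usual way via Helmholtz decomposition on a weighted $L^2$-space of shift-covariant vector fields; the required integrability follows from $\mathbb{E}[\omega(e)]<\infty$. Then
\begin{equation*}
M_t := \Phi(\omega,X_t) = X_t + \chi(\omega,X_t)
\end{equation*}
is a $P_0^\omega$-martingale under the environment viewed from the walker. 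Rescaling $M_t^{(n)} := M_{n^2 t}/n$ and applying the Lindeberg-type martingale invariance principle to $M^{(n)}$, together with ergodicity of $\bar\omega_t$ applied to its quadratic variation, yields convergence of $M^{(n)}$ to a Brownian motion with a deterministic covariance $\Sigma\Sigma^T$.

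The main obstacle, and the reason for the specific moment hypothesis $1/p+1/q<2/d$, is proving the \emph{sublinearity of the corrector}:
\begin{equation*}
\lim_{n\to\infty}\frac{1}{n}\max_{|x|\le n}|\chi(\omega,x)| = 0, \qquad \mathbb{P}\text{-a.s.}
\end{equation*}
Given sublinearity, the decomposition $X_t^{(n)} = M_t^{(n)} - \chi(\omega,X_{n^2t})/n$ immediately transfers the martingale CLT to $X^{(n)}$. To prove sublinearity I would follow the Moser iteration scheme of \cite{ADS14}: one first establishes an $L^1$ ergodic statement for $\chi$ via the spatial ergodic theorem, and then upgrades $L^1$-sublinearity to $L^\infty$-sublinearity along the lattice by a maximal inequality for $\mathcal{L}_X^\omega$-harmonic functions. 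It is here that $\mathbb{E}[\omega^p]<\infty$ (controlling the Sobolev constant on the level of edge weights) and $\mathbb{E}[\omega^{-q}]<\infty$ (providing the ellipticity needed to invert the Dirichlet form) must be combined through a Sobolev embedding of exponent depending on $p,q,d$; the condition $1/p+1/q<2/d$ is exactly what makes the iteration converge. Non-degeneracy of $\Sigma\Sigma^T$ follows from a standard argument comparing $\Sigma v\cdot v$ with the variational formula for an effective conductance in direction $v$, which is strictly positive because $\omega(e)>0$ $\mathbb{P}$-a.s.
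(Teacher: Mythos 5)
This theorem is not proved in the paper at all: it is quoted verbatim as Theorem~1.3 of the cited reference \cite{ADS14}, so there is no internal proof to compare against. Your outline is, however, a faithful reconstruction of the actual argument in that reference: harmonic coordinates via a shift-covariant corrector built by projection in a weighted $L^2$-space of vector fields, a martingale functional CLT for $\Phi(\omega,X_t)$ with the ergodic theorem for the environment process controlling the quadratic variation, and the whole difficulty concentrated in the $\ell^\infty$-sublinearity of the corrector, which \cite{ADS14} obtains from a maximal inequality for $\mathcal{L}_X^\omega$-harmonic functions proved by Moser iteration; the condition $1/p+1/q<2/d$ is precisely what makes the Sobolev step of that iteration close. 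One genuine slip: for the \emph{variable} speed walk considered here, the counting measure on $\mathbb{Z}^d$ is reversible, so the environment process $\tau_{X_t}\omega$ is stationary and ergodic directly under $\mathbb{P}$; the tilted measure $\mathbb{Q}(d\omega)\propto u^\omega(0)\,\mathbb{P}(d\omega)$ that you introduce is the invariant measure for the environment process of the \emph{constant} speed walk, and invoking it here would put the wrong weight into the quadratic-variation computation and hence into $\Sigma\Sigma^T$. This does not affect the architecture of the proof, but it should be corrected if the sketch were to be expanded into a full argument.
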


The following corollary gives a more general QFCLT. To state the corollary, for every $T > 0$ and every bounded continuous function $F$ on the Skorohod space $D([0, T], \mathbb{R}^d)$, set $\psi_n(x) = E^{\omega}_{\floor*{nx}}[F(X^{(n)})]$ and $\psi_\infty(x) = E^{BM}_0[F(\Sigma\cdot W + x)]$ with $(W,P^{BM}_0)$ being a Brownian motion started at 0. 

\begin{col}\label{thmnotorigin}
Under the assumption of Theorem \ref{THMqfclt}, we have that $\psi_n(x) \to \psi_\infty(x),~\mathbb{P}-a.s.$
\end{col}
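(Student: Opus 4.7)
The plan is to reduce the statement to Theorem \ref{THMqfclt} by a translation argument in both space and environment: rewrite the walk started at $\lfloor nx\rfloor$ in environment $\omega$ as a walk started at $0$ in the shifted environment $\tau_{\lfloor nx\rfloor}\omega$, then verify that for $\mathbb{P}$-a.e.\ $\omega$ all the shifted environments $\tau_{\lfloor nx\rfloor}\omega$ simultaneously lie in the full-measure good set of Theorem \ref{THMqfclt}.

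First I would exploit the translation covariance of the walk. Because $\tau_y\omega(x,z)=\omega(x+y,z+y)$, the process $\{X_t-\lfloor nx\rfloor:t\ge 0\}$ under $P^\omega_{\lfloor nx\rfloor}$ has the same law as the VSRW $\{Z_t\}$ in environment $\tau_{\lfloor nx\rfloor}\omega$ started at $0$. Hence, writing $Z^{(n)}_t:=n^{-1}Z_{n^2 t}$ and $c_n:=\tfrac{\lfloor nx\rfloor}{n}$,
\[
\psi_n(x)\;=\;E^{\tau_{\lfloor nx\rfloor}\omega}_{0}\!\left[F\!\left(Z^{(n)}+c_n\right)\right],
\]
where $+c_n$ denotes addition of the constant path $c_n$.

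Next I would handle the shift of environment. Let $A\subset\Omega_\omega$ denote the full-measure set on which the conclusion of Theorem \ref{THMqfclt} holds. By translation invariance of $\mathbb{P}$ (which follows from Assumption \ref{basicassump}(ii)), $\mathbb{P}(\tau_y\omega\in A)=1$ for every $y\in\mathbb{Z}^d$, so the countable intersection
\[
A_x\;:=\;\bigl\{\omega:\tau_{\lfloor nx\rfloor}\omega\in A\ \text{for every}\ n\ge 1\bigr\}
\]
still has $\mathbb{P}(A_x)=1$. For $\omega\in A_x$, the QFCLT applied in each environment $\tau_{\lfloor nx\rfloor}\omega$ tells us that, under $P^{\tau_{\lfloor nx\rfloor}\omega}_{0}$, the process $Z^{(n)}$ converges in law on $D([0,T],\mathbb{R}^d)$ to $\Sigma\cdot W$ with covariance $\Sigma\Sigma^T$.

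Finally I would pass to the limit inside $F$. Set $F_n(\gamma):=F(\gamma+c_n)$ and $F_\infty(\gamma):=F(\gamma+x)$; these are uniformly bounded by $\|F\|_\infty$. Addition of a constant path is a Skorohod homeomorphism, so from $c_n\to x$ one gets $F_n(\gamma_n)\to F_\infty(\gamma)$ whenever $\gamma_n\to\gamma$ in Skorohod space. By the extended continuous-mapping theorem (convergence in law together with a uniformly bounded, continuously converging family of functionals), it follows that
\[
E^{\tau_{\lfloor nx\rfloor}\omega}_{0}\!\left[F_n(Z^{(n)})\right]\;\longrightarrow\;E^{BM}_0\!\left[F(\Sigma W+x)\right]\;=\;\psi_\infty(x),
\]
giving the desired a.s.\ convergence.

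The only mildly delicate points are (i) writing the good set as a \emph{single} full-measure set simultaneously compatible with all shifts $\tau_{\lfloor nx\rfloor}$, which is handled by the countable-intersection step, and (ii) the joint passage to the limit $F_n\to F_\infty$ together with $Z^{(n)}\Rightarrow\Sigma W$; the latter is standard once continuity of the shift map in the Skorohod topology is invoked. No new moment or heat-kernel input beyond Theorem \ref{THMqfclt} is needed.
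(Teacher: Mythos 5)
There is a genuine gap in the second step. Your reduction of $\psi_n(x)$ to $E^{\tau_{\lfloor nx\rfloor}\omega}_{0}[F(Z^{(n)}+c_n)]$ via translation covariance is correct, and the countable-intersection argument does show that every shifted environment $\tau_{\lfloor nx\rfloor}\omega$ lies in the good set $A$ of Theorem \ref{THMqfclt}. But membership of $\tau_{\lfloor nx\rfloor}\omega$ in $A$ only tells you that, for that \emph{fixed} environment, $E^{\tau_{\lfloor nx\rfloor}\omega}_{0}[F(Z^{(m)})]\to E^{BM}_0[F(\Sigma W)]$ as $m\to\infty$. What you need is the \emph{diagonal} limit: the scaling index and the environment shift are coupled through the same $n$, so you must control $a_{n,n}$ where $a_{n,m}:=E^{\tau_{\lfloor nx\rfloor}\omega}_{0}[F(Z^{(m)})]$, knowing only that $\lim_{m\to\infty}a_{n,m}$ exists and equals the same limit for each fixed $n$. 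Without uniformity in $n$ of the rate of convergence over the family of environments $\{\tau_{\lfloor nx\rfloor}\omega\}_{n\ge1}$ (which the quenched FCLT, as a statement, does not provide), the diagonal sequence need not converge. This is exactly the obstruction that prevents deducing the corollary from the \emph{statement} of the QFCLT by a soft shift argument.

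The paper's proof avoids this by reopening the proof of Theorem 1.3 of \cite{ADS14} rather than citing its conclusion: one recenters the harmonic (corrector) decomposition at the moving starting point, setting $M_t = X_t-\lfloor nx\rfloor-\chi(\omega,X_t-\lfloor nx\rfloor)$, and observes that the two key inputs --- the martingale functional CLT and the sublinearity of the corrector --- are established uniformly over starting points at distance $O(n)$ from the origin, so the argument goes through verbatim with the shifted center. To repair your proof you would need either such a uniform-over-starting-points version of the invariance principle, or an explicit argument that the convergence in Theorem \ref{THMqfclt} is locally uniform in the environment along the orbit $\{\tau_{\lfloor nx\rfloor}\omega\}$; neither follows from the countable-intersection step alone.
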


\begin{proof}
Let $\chi$ be the corrector defined in Section 2.1 of \cite{ADS14}. For any $x \in \mathbb{R}^d$, replace the definition of $M_t$ in (7) of \cite{ADS14} by
\begin{equation}
	M_t = X_t -\floor*{x/\epsilon}- \chi(\omega, X_t-\floor*{x/\epsilon}),
\end{equation}
which is a martingale under $P^\omega_{\floor*{x/\epsilon}}$ for $\mathbb{P}-a.e.~\omega$, and then the rest of the proof is the same as that of {\cite[Theorem 1.3]{ADS14}}.
\end{proof}

The second result is the heat kernel estimate for transition density. Recall that the heat kernels associated with $\mathcal{L}^\omega_X$ is defined by
\begin{equation}\label{heatkernel1}
p^\omega (t, x, y) := P^{\omega}_x[X_t = y].
\end{equation} 

To state the upper bounds on $p^\omega(t,x,y)$, we need to introduce the distance $d_\omega$ defined by
\begin{equation}\label{chemicaldist}
d_\omega(x, y):= \inf\limits_\gamma\Big\{\sum_{i = 0}^{l_\gamma-1}1\land\omega(z_i, z_{i+1})^{-1/2}\Big\}
\end{equation}
where the infimum is taken over all paths $\gamma = (z_0, \cdots,z_{l_\gamma})$ connecting $x$ and $y$.

We denote by $\tilde{B}(x, r)$ the closed ball with center $x$ and radius $r$ with respect to $d_\omega$, that is $\tilde{B}(x, r) := \{y \in \mathbb{Z}^d | d_\omega \le r\}$. Notice that for usual graph distance $d$, $d_\omega(x, y) < d(x, y)$ and therefore $B(x, r) \subset \tilde{B}(x, r)$ where $B(x, r)$ the closed ball with center $x$ and radius $r$ with respect to $d$. 

For any non-empty, finite $A \subset \mathbb{Z}^d$ and $p \in [1,\infty)$, we introduce space-averaged $\ell^p$-norms on functions $f : A \mapsto R$ by 
\begin{equation}
\norm{f}_{p, A} = \left(\frac{1}{|A|}\sum_{x \in A}|f(x)|^p\right)^{1/p}
\end{equation}

Recall that $u$ and $v$ defined in (\ref{defuv}). We define for any $x \in \mathbb{Z}^d$
\begin{eqnarray}
\tilde{u}_p(x):=\limsup_{n\to\infty}\norm{u^\omega}_{p, \tilde{B}(x, n)}& \text{and} & \tilde{v}_q(x):=\limsup_{n\to\infty}\norm{v^\omega}_{q, \tilde{B}(x, n)}.
\end{eqnarray}

\begin{assumption}\label{assupheatV}
	There exist $p, q \in (1, +\infty]$ with
	\begin{equation}
	\frac{1}{p-1}+\frac{1}{q} < \frac{2}{d}
	\end{equation}
	such that 
	\begin{eqnarray}
	\tilde{u}_p = \sup_{x \in \mathbb{R}^d}\tilde{u}_p(x) < \infty & \text{and} &\tilde{v}_q = \sup_{x \in \mathbb{R}^d}\tilde{v}_q(x)< \infty.
	\end{eqnarray}
	In particular for every $x \in \mathbb{R}^d$ there exists $\tilde{N}(x, \omega) > 2$ such that
	\begin{eqnarray}
	\sup_{n \ge \tilde{N}(x)}\norm{u^\omega}_{p, \tilde{B}(x, n)} \le 2\tilde{u}_p(x)& \text{and} & \sup_{n \ge \tilde{N}(x)}\norm{v^\omega}_{q, \tilde{B}(x, n)} \le 2\tilde{v}_q(x).
	\end{eqnarray}
\end{assumption}

\begin{thm}[{\cite[Theorem 1.10]{ADS15H}}]\label{heatkernalVSRW}
	Suppose that Assumption \ref{assupheatV} holds. Then, there exist constants $c_i(d, p, q, \tilde{u}_p, \tilde{v}_q)$ such that for any given t and x with $\sqrt{t}\ge\tilde{N}(x, \omega)$ and all $y \in \mathbb{R}^d$ the following hold,
	\begin{itemize}
		\item if $d_\omega(x, y) \le c_5t$ then
		\begin{equation}
		p^\omega(t,x,y)\le c_6t^{-d/2}\exp(-c_7d_\omega(x,y)^2/t).
		\end{equation}
		\item if $d_\omega(x, y) \ge c_5t$ then
		\begin{equation}
		p^\omega(t,x,y)\le c_6t^{-d/2}\exp(-c_8d_\omega(x,y)(1\vee \ln(d_\omega(x,y)/t))).
		\end{equation}
	\end{itemize}
\end{thm}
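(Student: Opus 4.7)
The plan is to obtain both on-diagonal and off-diagonal heat kernel bounds by combining a Moser iteration scheme with Davies' perturbation argument, adapted to the regime of random conductances with only moment conditions rather than uniform ellipticity. The quantities $\tilde u_p$ and $\tilde v_q$ and the critical exponent condition $\frac{1}{p-1}+\frac{1}{q}<\frac{2}{d}$ should be interpreted as the minimal integrability needed for the Sobolev machinery to survive the lack of uniform ellipticity.

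First I would establish a weighted Sobolev/Poincar\'e inequality on the intrinsic balls $\tilde B(x,n)$. Because the Dirichlet form carries the factor $\omega(e)$ on each edge, a classical $\ell^2$--$\ell^{2d/(d-2)}$ Sobolev inequality on $\mathbb{Z}^d$ must be twisted by H\"older applied with exponents matched to $p$ and $q$: the $v^\omega$-factor controls the passage from $\ell^2$ energy to the random-environment Dirichlet form, while the $u^\omega$-factor enters when rewriting reversible measure integrals in terms of counting measure. The resulting effective dimension is $d_{\mathrm{eff}}$ with $2/d_{\mathrm{eff}} = 2/d - \frac{1}{p-1}-\frac{1}{q}$, which is positive exactly by the hypothesis. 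The norms $\|u^\omega\|_{p,\tilde B(x,n)}$ and $\|v^\omega\|_{q,\tilde B(x,n)}$ appear as the Sobolev constants, and the definition of $\tilde N(x,\omega)$ is tuned so that beyond this scale these constants are uniformly bounded by $2\tilde u_p$ and $2\tilde v_q$.

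Second, I would plug this weighted Sobolev inequality into a Moser iteration for caloric functions associated with $\partial_t - \mathcal{L}_X^\omega$. Iterating the $L^2$--$L^{q_k}$ estimates along a geometric sequence of exponents and shrinking balls, one obtains a maximal inequality; applied to $p^\omega(\cdot,\cdot,y)$ this yields the on-diagonal bound $p^\omega(t,x,x)\le c\,t^{-d/2}$ whenever $\sqrt t\ge \tilde N(x,\omega)$. By Cauchy--Schwarz and reversibility one extends this to $p^\omega(t,x,y)\le c\,t^{-d/2}$ for all $y$, giving the first, purely polynomial factor in both displayed estimates.

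For the Gaussian off-diagonal improvement I would apply Davies' perturbation method: conjugate $\mathcal{L}_X^\omega$ by $e^{\lambda \psi}$ for a function $\psi$ that is $1$-Lipschitz with respect to the intrinsic distance $d_\omega$ defined in \eqref{chemicaldist}. The key algebraic identity bounds the resulting perturbation term by $\lambda^2$ on each edge, precisely because $d_\omega$ uses $\omega(e)^{-1/2}\wedge 1$; optimizing in $\lambda$ and $\psi$ gives the Gaussian factor $\exp(-c\,d_\omega(x,y)^2/t)$ in the ``diffusive'' regime $d_\omega(x,y)\le c_5 t$. In the ballistic regime $d_\omega(x,y)\ge c_5 t$, the optimization saturates: one replaces the Gaussian conjugation by a linear one and gets the Poissonian $\exp(-c\,d_\omega(x,y)\ln(d_\omega(x,y)/t))$ tail, which is where the logarithmic correction in the second bullet appears.

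The main obstacle is the first step: producing the weighted Sobolev inequality with the correct exponent $d_{\mathrm{eff}}$. Under uniform ellipticity this is immediate, but here one must track two simultaneous weights and show that the critical threshold $\frac{1}{p-1}+\frac{1}{q}<\frac{2}{d}$ is exactly what keeps the iteration convergent; any weaker integrability assumption breaks the geometric contraction of the Moser scheme. Once this inequality is in place the rest is a fairly standard adaptation of the Davies--Fabes--Stroock program, with the intrinsic metric $d_\omega$ and the random scale $\tilde N(x,\omega)$ absorbing the irregularity of the environment.
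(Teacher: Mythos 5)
This statement is not proved in the paper at all: it is imported verbatim as Theorem 1.10 of the cited reference \cite{ADS15H} and used as a black box, so there is no ``paper's proof'' to compare against line by line. Measured against the actual argument in that reference, your outline does identify the correct strategy --- a weighted Sobolev inequality on intrinsic balls whose constants are controlled by $\norm{u^\omega}_{p,\tilde B}$ and $\norm{v^\omega}_{q,\tilde B}$, a parabolic Moser iteration yielding a maximal inequality and hence the on-diagonal bound $t^{-d/2}$, and Davies' exponential perturbation in the metric $d_\omega$ to produce the Gaussian and Poissonian off-diagonal factors. But as a proof it is only a roadmap: the two genuinely hard steps (establishing the weighted Sobolev inequality with the stated effective dimension under the condition $\frac{1}{p-1}+\frac{1}{q}<\frac{2}{d}$, and running the Moser iteration when the weights are neither bounded above nor below) are asserted rather than carried out, and these occupy the bulk of \cite{ADS15H}. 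There is also one concrete flaw in the passage from on-diagonal to off-diagonal polynomial bounds: the Cauchy--Schwarz route $p^\omega(t,x,y)\le p^\omega(t/2,x,x)^{1/2}p^\omega(t/2,y,y)^{1/2}$ requires $\sqrt{t}\ge\tilde N(y,\omega)$ as well, which the theorem does not assume (the hypothesis involves only $\tilde N(x,\omega)$); the correct route is the semigroup bound $p^\omega(t,x,y)\le\sup_z p^\omega(t/2,x,z)$ combined with stochastic completeness, which needs control only at $x$. So your proposal is directionally right but should be regarded as a summary of the cited proof rather than a replacement for it.
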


A straightforward corollary of Theorem \ref{heatkernalVSRW} is as follows. 
\begin{col}\label{heatcol}
	Suppose that Assumption \ref{assupheatV} holds. Then, there exist constants $C(d, p, q, \tilde{u}_p, \tilde{v}_q)$ and $N(x, \omega)$ such that for any given t and x with $\sqrt{t}\ge N(x, \omega)$ and all $y \in \mathbb{R}^d$,
	\begin{equation}
	p^\omega(t,x,y)\le Ct^{-d/2}.
	\end{equation}
\end{col}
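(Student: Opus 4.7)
The plan is to show that the Gaussian and sub-Gaussian tail factors in Theorem \ref{heatkernalVSRW} can each be absorbed into the polynomial prefactor $t^{-d/2}$, uniformly in $y$, at the cost of enlarging $\tilde{N}(x,\omega)$ to a new threshold $N(x,\omega)$. This gives a single on-diagonal-type bound that drops the exponential dependence on $d_\omega(x,y)$.

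First I would split the analysis according to the dichotomy in Theorem \ref{heatkernalVSRW}. In the ``near'' regime $d_\omega(x,y) \le c_5 t$, the estimate already reads
\[
p^\omega(t,x,y) \;\le\; c_6\, t^{-d/2}\, \exp\!\bigl(-c_7\, d_\omega(x,y)^2/t\bigr) \;\le\; c_6\, t^{-d/2},
\]
since the exponential factor is bounded by $1$. So in this regime there is nothing to do beyond taking $C \ge c_6$.

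The ``far'' regime $d_\omega(x,y) \ge c_5 t$ is the only one requiring an argument. Here I would use $1 \vee \ln(d_\omega(x,y)/t) \ge 1$ together with $d_\omega(x,y) \ge c_5 t$ to bound
\[
\exp\!\bigl(-c_8\, d_\omega(x,y)\,(1 \vee \ln(d_\omega(x,y)/t))\bigr) \;\le\; \exp(-c_8 c_5\, t).
\]
Since $\exp(-c_8 c_5\, t)\, t^{d/2}$ is bounded on $[1,\infty)$ by a constant $C'$ depending only on $d,c_8,c_5$, we obtain
\[
p^\omega(t,x,y) \;\le\; c_6\, t^{-d/2}\, \exp(-c_8 c_5\, t) \;\le\; c_6 C'\, t^{-d/2}
\]
for all $t \ge 1$. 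Taking $C := \max\{c_6, c_6 C'\}$ and $N(x,\omega) := \max\{\tilde N(x,\omega),\, 1\}$ combines the two cases, yielding $p^\omega(t,x,y) \le C\, t^{-d/2}$ whenever $\sqrt{t} \ge N(x,\omega)$, uniformly in $y$. Since the two cases are handled separately and involve only elementary inequalities once Theorem \ref{heatkernalVSRW} is invoked, no real obstacle arises; the only care needed is to ensure the threshold on $t$ is large enough that $\exp(-c_8 c_5 t)$ can be absorbed into a $t^{-d/2}$ prefactor, which is automatic as soon as $t \ge 1$.
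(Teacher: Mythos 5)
Your proof is correct and is essentially the argument the paper intends (the paper simply calls this a straightforward corollary of Theorem \ref{heatkernalVSRW} and gives no details): both regimes of the theorem already carry the prefactor $c_6 t^{-d/2}$ multiplied by an exponential of a nonpositive quantity. In fact your second case can be shortened, since $d_\omega(x,y)\bigl(1\vee\ln(d_\omega(x,y)/t)\bigr)\ge 0$ makes that exponential factor at most $1$ directly, so no absorption into the polynomial prefactor and no extra threshold $t\ge 1$ are needed, and one may take $C=c_6$ and $N(x,\omega)=\tilde N(x,\omega)$.
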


Now let $\mu$ be a translation-invariant, ergodic gradient Gibbs measure and $\tilde{\mu}$ is the extension of $\mu$ to $\mathbb{R}^{\mathbb{Z}^d} \times \mathbb{R}^E$ defined in (\ref{muext}).  
\begin{lemma}\label{lemmaassumptiontest}
	Assumption  \ref{assupheatV} holds for $\tilde{\mu}-a.s.~\omega$.
\end{lemma}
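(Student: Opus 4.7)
The plan starts from the fact (Corollary \ref{pbound}) that every real moment of $\omega(e)$ is finite under $\tilde{\mu}$. Since $u^{\omega}(x)$ and $v^{\omega}(x)$ are each sums of $2d$ terms of the form $\omega(e)^{\pm 1}$, an elementary bound gives $\mathbb{E}_{\tilde{\mu}}[(u^{\omega}(0))^p] < \infty$ and $\mathbb{E}_{\tilde{\mu}}[(v^{\omega}(0))^q] < \infty$ for \emph{every} finite $p, q$. In particular, one is free to choose $p$ and $q$ arbitrarily large, so that the algebraic constraint $\tfrac{1}{p-1}+\tfrac{1}{q} < \tfrac{2}{d}$ is automatically satisfied; the task reduces to showing $\tilde{u}_p < \infty$ and $\tilde{v}_q < \infty$ for such $p, q$.

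Next I would invoke Lemma \ref{lemmamuext}, which tells us that $\tilde{\mu}$ is translation-invariant and ergodic under the lattice shifts $\{\tau_z\}_{z \in \mathbb{Z}^d}$. The multidimensional Birkhoff (Wiener) ergodic theorem applied to the stationary fields $\omega \mapsto (u^{\omega}(y))^p$ and $\omega \mapsto (v^{\omega}(y))^q$ then yields, for $\tilde{\mu}$-a.s.\ $\omega$,
\begin{equation}
\lim_{n \to \infty}\frac{1}{|B(x, n)|}\sum_{y \in B(x, n)}(u^{\omega}(y))^p \;=\; \mathbb{E}_{\tilde{\mu}}[(u^{\omega}(0))^p],
\end{equation}
and similarly for $v^{\omega}$. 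A countable union over base-points $x \in \mathbb{Z}^d$ gives a single $\tilde{\mu}$-full event on which this convergence is valid at every $x$.

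The remaining — and genuinely delicate — step is to replace the standard lattice balls $B(x,n)$ appearing above by the chemical-distance balls $\tilde{B}(x,n)$ that feature in Assumption \ref{assupheatV}. Because $d_{\omega} \le d$, one only has the inclusion $B(x, n) \subset \tilde{B}(x, n)$, so in principle the $d_\omega$-ball could be much larger than the corresponding graph ball and the ergodic limit does not transfer for free. The hard part is therefore to establish, for $\tilde{\mu}$-a.s.\ $\omega$, a comparison $\tilde{B}(x, n) \subset B(x, Cn)$ valid for all sufficiently large $n$. This is a standard but non-trivial step in the random conductance literature: using the integrability of $\omega^{-q}$, a Borel–Cantelli / subadditive-shape argument controls the density of edges with $\omega(e) \le 1$, along which the two distances agree up to a constant, and shows that every $d_\omega$-geodesic of length $n$ has $d$-length $O(n)$. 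Once this comparison is available, we combine $B(x,n) \subset \tilde{B}(x,n) \subset B(x, Cn)$ with the ergodic limit to get
\begin{equation}
\norm{u^{\omega}}_{p, \tilde{B}(x, n)}^p \;\le\; \frac{|B(x, Cn)|}{|\tilde{B}(x, n)|}\cdot\frac{1}{|B(x, Cn)|}\sum_{y \in B(x, Cn)}(u^{\omega}(y))^p \;\xrightarrow[n\to\infty]{}\; C^d\,\mathbb{E}_{\tilde{\mu}}[(u^{\omega}(0))^p],
\end{equation}
which bounds $\tilde{u}_p(x)$; ergodicity promotes this to a deterministic constant, uniform in $x$, hence $\tilde{u}_p < \infty$. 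The argument for $\tilde{v}_q$ is word-for-word the same with $\omega^{-1}$ in place of $\omega$.
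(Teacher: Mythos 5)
Your proposal follows essentially the same route as the paper: all moments of $\omega(e)^{\pm 1}$ from Corollary \ref{pbound}, translation invariance and ergodicity of $\tilde{\mu}$ from Lemma \ref{lemmamuext}, and the spatial ergodic theorem to identify $\tilde{u}_p$ and $\tilde{v}_q$ with expectations, with $p,q$ taken large so that $\tfrac{1}{p-1}+\tfrac{1}{q}<\tfrac{2}{d}$ is automatic. The one point of divergence is also the one point where you are more careful than the paper: the paper applies the ergodic theorem directly to averages over the chemical balls $\tilde{B}(x,n)$ (after a Minkowski bound), remarking only that $\tilde{B}(x,n)\to\mathbb{Z}^d$, whereas you correctly note that $\tilde{B}(x,n)$ is a random set that merely \emph{contains} $B(x,n)$, so the ergodic limit does not transfer for free. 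Your proposed fix, an a.s.\ inclusion $\tilde{B}(x,n)\subset B(x,Cn)$, is the right kind of statement, but it is left as a sketch and two details are off: (i) since $1\wedge\omega(e)^{-1/2}<1$ exactly when $\omega(e)>1$, lower-bounding $d_\omega$ by the graph distance requires controlling the density of edges with \emph{large} conductance, i.e.\ the upper tail $\mathbb{E}_{\tilde{\mu}}[\omega(e)^p]$, not $\mathbb{E}_{\tilde{\mu}}[\omega(e)^{-q}]$; and (ii) the path-counting/Borel--Cantelli argument you invoke is standard for i.i.d.\ conductances but is not automatic under mere ergodicity, so this step still needs an argument adapted to $\tilde{\mu}$. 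Since the paper supplies no justification at all for the passage from graph balls to $d_\omega$-balls, your account is the more honest of the two, but neither version fully closes that step.
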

\begin{proof}
	Since $B(x, r) \subset \tilde{B}(x, r)$， $\tilde{B}(x, r) \to \mathbb{Z}^d$ as $r \to \infty$.
	
	By Lemma \ref{lemmamuext}, $\tilde{\mu}$ is translation-invariant and ergodic. Define $\omega_i: \mathbb{Z}^d \to \mathbb{R}$ by $\omega_i(x) = \omega(x, x+e_i)$ where $e_i$ is the unit vector at $i$th direction. By Corollary \ref{pbound}, $\mathbb{E}_{\tilde{\mu}}(\omega_i(x)^p)$ exist for all $p \in \mathbb{R}$. 
	
	By the ergodicity of $\tilde{\mu}$, for $\tilde{\mu}-a.s.~\omega$
	\begin{eqnarray}
	\bar{u}^\omega_p(x)&:=&\limsup_{n\to\infty}\norm{u^\omega}_{p, \tilde{B}(x, n)} \nonumber\\&\le&\limsup_{n\to\infty}\sum_{i=1}^{d}\norm{\omega_i}_{p, \tilde{B}(x, n)} \nonumber\\&=& \sum_{i=1}^{d}\mathbb{E}_{\tilde{\mu}}(\omega(0,e_i)^p)^{1/p}. 
	\end{eqnarray}
	 Note that the second inequality is due to Minkowski inequality. Similarly we have
	\begin{equation}
	\bar{v}^\omega_q(x):=\limsup_{n\to\infty}\norm{v^\omega}_{q, \tilde{B}(x, n)}\le\sum_{i=1}^{d}\mathbb{E}_{\tilde{\mu}}(\omega(0,e_i)^{-q})^{1/q}.
	\end{equation}
	Notice that $\bar{u}^\omega_p(x)$ and $\bar{v}^\omega_q(x)$ are uniformly bounded with respect to $x$, so $\bar{u}^\omega_p: = \sup_{x \in \mathbb{R}^d}\tilde{u}^\omega_p(x) $ and $ \bar{v}^\omega_q: = \sup_{x \in \mathbb{R}^d}\tilde{v}^\omega_q(x)$ exist. As $p$ and $q$ are arbitrary here, we can choose them such that they satisfy $\frac{1}{p}+\frac{1}{q} \le \frac{2}{d}$. Thus Assumption \ref{assupheatV} holds for $\tilde{\mu}-a.s.~\omega$.
\end{proof}

\subsection{Potential theory}\label{sec::potential}
The proof of the Lemma \ref{lemmazerofunction} leads us to the study of potential theory for operators depending on a
random environment that fall into the class of random conductance models. We have
borrowed some of the notation and results from the paper of Biskup and Spohn \cite{BS11}. Recall that for a function $g: \Omega_\omega\times\mathbb{Z}^d \to \mathbb{R}$, we say $g$ satisfies the {\itshape shift covariance property} if
\begin{equation}\label{shiftcov1a}
g(\omega, x+b) - g(\omega, x) = g(\tau_x\omega, b),
\end{equation}
where  $x \in \mathbb{Z}^d$ and $b$ is a standard basis vector in $\mathbb{Z}^d$, and 
\begin{equation}\label{shiftcov2a}
g(\omega, 0) = 0.
\end{equation} We also define a space shift by $z \in \mathbb{Z}^d$ to be the map $\tau_z : \Omega_\omega \to \Omega_\omega$
\begin{equation}
(\tau_z\omega)(x, y) := \omega(x+z,y+z), ~~\forall \{x, y\} \in E.
\end{equation}

Consider a translation-invariant $\nu$ probability measure on $\Omega_\omega = \left(\mathbb{R}^+\right)^{\mathbb{Z}^d}$. Let $E_\nu$ be the expectation with respect to $\nu$. We call a function $h: \Omega_\omega \mapsto \mathbb{R}$ local if $h$ depends on $\omega_{xy}$ for only finitely edges $xy$ in $E$. Let $L^2(\nu)$ be the closure of the set of all local functions in the topology induced by the inner product
\begin{equation}
\left<h, g\right>:=E_\nu(h(\omega)g(\omega)).
\end{equation}
Let $\hat{E} := \{\hat{e}_1,..., \hat{e}_d \}$ denote the set of standard basis vectors in $\mathbb{Z}^d$. The translations
by the vectors in $\hat{E}$ induce natural unitary maps $T_1,...,T_d$ on $L^2(\nu)$ defined via
\begin{equation}
(T_jh) := h \circ \tau_{\hat{e}_j} , j = 1,...,d.
\end{equation}
Apart from square integrable functions, we will also need
to work with vector fields, by which we will generally 
mean measurable functions $u:\Omega_\omega \times \hat{E} \to \mathbb{R}$ or $\Omega_\omega \times \hat{E} \to \mathbb{R}^d$, depending on the context. Here $u$ is a $d$-component function and we will show that it extends to the vector field in the usual sense in Lemma \ref{vectorfieldext}. We will sometimes write $u_1,...,u_d$ for $u(\cdot, \hat{e}_1),...,u(\cdot, \hat{e}_d)$; note that these may still be vector-valued. 

While we index vector fields only by the positive standard  basis vectors, in certain situations, it is convenient to have them also defined for the negative coordinate directions via
\begin{equation}
u(\omega,-b) = -u(\tau_{-b}\omega,b),\quad b\in \hat{E}.
\end{equation}

Let $L^2_{vec}(\nu)$ be the set of all vector fields with $(u,u) < \infty$, where $(\cdot,\cdot)$ denotes the inner product
\begin{equation}
(u, v): = E_\nu\left(\sum_{b\in \hat{E}}\omega_bu(\omega,b)\cdot v(\omega,b)\right).
\end{equation}
Examples of such functions are the gradients $\nabla h$ of local functions $h \in L^2(\nu)$ defined component-wise via the formula
\begin{equation}
(\nabla h)_j := T_jh-h.
\end{equation}
We denote by $L^2_\nabla(\nu)$ the closure of the set of gradients of local functions in the
topology induced by the above inner product.
\begin{lemma}[{\cite[Lemma 5.2]{BS11}}]\label{vectorfieldext}
	Let $u \in L^2_\nabla(\nu)$. Then, $u$ satisfies the cycle condition
	\begin{equation}\label{conditioncycle}
	\sum_{j=1}^nu(\tau_{x_j}\omega, x_{j+1}-x_j) = 0
	\end{equation}
	for any finite (nearest-neighbor) cycle
	$(x_0,x_1,...,x_n = x_0)$ on $\mathbb{Z}^d$. In
	particular, there exists a shift-covariant function $\bar{u}: \Omega_\omega\times\mathbb{Z}^d \to \mathbb{R}^d$ such that $u(\omega,b) = \bar{u}(\omega,b)$ for every $b \in \hat{E}$.
\end{lemma}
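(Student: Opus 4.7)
The plan is to first establish the cycle condition on the dense subspace of gradients of local functions, where it is a telescoping identity, and then transfer it to the closure $L^2_\nabla(\nu)$ by a standard subsequence argument. The construction of $\bar u$ will then follow by path summation, with path-independence guaranteed exactly by the cycle condition. Throughout I use the convention $u(\omega, -b) := -u(\tau_{-b}\omega, b)$ so that $u$ is defined on every nearest-neighbor step.

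First, for any local $h \in L^2(\nu)$ and any cycle $(x_0, x_1, \ldots, x_n = x_0)$ with $b_j := x_{j+1}-x_j$, I would compute
\begin{equation*}
\sum_{j=0}^{n-1}(\nabla h)(\tau_{x_j}\omega, b_j) \;=\; \sum_{j=0}^{n-1}\bigl[h(\tau_{x_{j+1}}\omega)-h(\tau_{x_j}\omega)\bigr] \;=\; 0,
\end{equation*}
which is a pure telescoping sum using that $\tau_{x_n} = \tau_{x_0}$. Thus the cycle condition holds identically on the dense subspace of gradients of local functions.

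Next, given $u \in L^2_\nabla(\nu)$, pick approximating gradients $u^{(k)} = \nabla h_k$ with $u^{(k)} \to u$ in $L^2_{\mathrm{vec}}(\nu)$. Because $\omega(b) > 0$ for $\nu$-a.e.\ $\omega$ (by the definition of $\Omega_\omega$) and the inner product on $L^2_{\mathrm{vec}}$ weighs each component by $\omega_b$, $L^2_{\mathrm{vec}}$-convergence forces $u^{(k)}(\cdot, b) \to u(\cdot, b)$ in $\nu$-probability for each $b \in \hat E$. Passing to a subsequence gives $\nu$-a.s.\ convergence for one component along one translate; by a diagonal extraction over the countably many triples (cycle, index, component), I obtain a single subsequence along which $u^{(k)}(\tau_{x_j}\omega, b_j) \to u(\tau_{x_j}\omega, b_j)$ simultaneously for every cycle, on a single full-measure set $\Omega_0$. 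Since each cycle sum is finite, the cycle condition passes to the limit on $\Omega_0$.

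For the final claim, fix $\omega \in \Omega_0$ and define, for $x \in \mathbb{Z}^d$ and any nearest-neighbor path $\gamma = (0 = y_0, y_1, \ldots, y_n = x)$,
\begin{equation*}
\bar u(\omega, x) \;:=\; \sum_{j=0}^{n-1} u(\tau_{y_j}\omega, y_{j+1}-y_j).
\end{equation*}
Concatenating two paths from $0$ to $x$ produces a cycle, so path-independence is equivalent to the cycle condition already proved. Then $\bar u(\omega, 0)=0$ by the empty path, and for $b \in \hat E$ one may extend any path from $0$ to $x$ by the edge $(x, x+b)$, yielding $\bar u(\omega, x+b) - \bar u(\omega, x) = u(\tau_x\omega, b) = \bar u(\tau_x\omega, b)$, which is the required shift-covariance \eqref{shiftcov1a}. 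By the one-step path, $\bar u(\omega, b) = u(\omega, b)$ for $b \in \hat E$.

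The main obstacle is the passage from $L^2_{\mathrm{vec}}$-convergence of $u^{(k)}$ to a genuine pointwise identity for $u$: one must promote $L^2$-convergence to a.s.\ convergence simultaneously on all the countably many cycles and relevant translates, which is why the diagonal extraction is essential. Everything else is either a telescoping identity or a direct consequence of path-independence.
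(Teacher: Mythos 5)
Your proof is correct. The paper itself gives no proof of this lemma --- it is imported verbatim as Lemma 5.2 of \cite{BS11} --- and your argument (telescoping on gradients of local functions, passage to the $L^2_\nabla$-closure, then path summation with path-independence from the cycle condition) is exactly the standard one. One point worth flagging as a genuine improvement over a bare citation: \cite{BS11} works under uniform ellipticity, where $L^2_{\mathrm{vec}}$-convergence immediately controls each component in $L^2(\nu)$; in the present paper the conductances are only a.s.\ positive and finite, so your intermediate step --- deducing componentwise convergence in $\nu$-probability from the $\omega_b$-weighted $L^2$ convergence, then extracting an a.s.-convergent subsequence simultaneously over the countably many translates and cycles (using translation invariance of $\nu$ to transfer convergence in probability to each translate) --- is precisely the adaptation needed to make the cited lemma valid in this setting.
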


We will henceforth use the convention of writing $\bar{u}$ for the extension of a shift-covariant
vector field $u \in L^2_{vec}$ to a function on $\mathbb{Z}^d$. Notice that the shift $T_j$ extends
naturally via
\begin{equation}
T_j\bar{u}(\omega,x):=\bar{u}(\tau_{\hat{e}_j}\omega, x)=\overline{T_ju}(\omega,x).
\end{equation}
Next, let us characterize the functions in $(L^2_\nabla)^\perp$.
\begin{lemma}[{\cite[Lemma 5.3]{BS11}}]
	For $u \in L^2_{vec}(\nu)$, let $\mathscr{L}u$ be the function in $L^2(\nu)$ defined by
	\begin{equation}
	(\mathscr{L}u)(\omega):=\sum_{b\in \hat{E}}[\omega_bu(\omega, b)-(\tau_{-b}\omega)_bu(\tau_{-b}\omega, b)],
	\end{equation}
	where $−b$ is the standard basis vector opposite to $b$. We then have
	\begin{equation}
	u \in (L_\nabla^2)^\perp \Leftrightarrow \mathscr{L}u = 0,\quad\nu-a.s.
	\end{equation}
	If u satisfies the cycle condition and $\bar{u}$ is its extension, then $\mathscr{L}u(\tau_x\omega) = \mathscr{L}_X^\omega u(κ,x)$ where $\mathscr{L}_X$ is defined in (\ref{generator}).
\end{lemma}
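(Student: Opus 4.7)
The plan is to prove the two claims separately; both reduce to direct computation once the definitions are unrolled.

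For the equivalence $u \in (L^2_\nabla)^\perp \Leftrightarrow \mathscr{L}u = 0$, the first step is to compute the pairing $(u, \nabla h)$ for $u \in L^2_{vec}(\nu)$ and any local $h$, and to identify it with $-E_\nu(h \cdot \mathscr{L}u)$. Writing $(\nabla h)(\omega, b) = (T_b h - h)(\omega)$ component-wise and expanding the inner product gives
\begin{equation*}
(u, \nabla h) \;=\; E_\nu\Bigl(\sum_{b \in \hat{E}} \omega_b\, u(\omega, b)\, [h(\tau_b\omega) - h(\omega)]\Bigr).
\end{equation*}
To decouple $h(\tau_b\omega)$ from $u(\omega, b)$ inside the expectation, I would perform the change of variables $\omega' = \tau_b\omega$ in the first summand and invoke translation invariance of $\nu$ (so $(\tau_b)_\ast \nu = \nu$), which yields
\begin{equation*}
E_\nu\bigl[\omega_b\, u(\omega, b)\, h(\tau_b\omega)\bigr] \;=\; E_\nu\bigl[(\tau_{-b}\omega)_b\, u(\tau_{-b}\omega, b)\, h(\omega)\bigr].
\end{equation*}
Collecting terms over $b \in \hat{E}$ gives $(u, \nabla h) = -E_\nu[h\,\mathscr{L}u]$. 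Vanishing of this pairing for every local $h$ is then equivalent to $\mathscr{L}u = 0$ $\nu$-a.s., by density of local functions in $L^2(\nu)$ together with the hypothesis (built into the statement) that $\mathscr{L}u \in L^2(\nu)$.

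For the second claim, I will expand the generator $\mathscr{L}_X^\omega$ applied to the extension $\bar{u}$ at $x$ using definition (\ref{generator}); splitting the sum over neighbours of $x$ into contributions from $\pm b$ with $b \in \hat{E}$ gives
\begin{equation*}
(\mathscr{L}_X^\omega \bar{u})(\omega, x) \;=\; \sum_{b \in \hat{E}}\bigl[\omega(x, x+b)\bigl(\bar{u}(\omega, x+b) - \bar{u}(\omega, x)\bigr) + \omega(x, x-b)\bigl(\bar{u}(\omega, x-b) - \bar{u}(\omega, x)\bigr)\bigr].
\end{equation*}
Shift covariance of $\bar{u}$ supplies $\bar{u}(\omega, x+b) - \bar{u}(\omega, x) = u(\tau_x\omega, b)$, and, via the antisymmetric convention $u(\omega, -b) = -u(\tau_{-b}\omega, b)$, also $\bar{u}(\omega, x-b) - \bar{u}(\omega, x) = -u(\tau_{-b}\tau_x\omega, b)$. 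Combined with the identities $\omega(x, x+b) = (\tau_x\omega)_b$ and $\omega(x, x-b) = (\tau_{-b}\tau_x\omega)_b$ (using symmetry of the undirected edge weight), each summand rearranges into the corresponding term of $\mathscr{L}u$ evaluated at $\omega' = \tau_x\omega$, giving $\mathscr{L}_X^\omega \bar{u}(\omega, x) = (\mathscr{L}u)(\tau_x\omega)$.

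The main obstacle is essentially bookkeeping: carefully tracking how the scalar edge-weight $\omega_b$ transforms under the shift $\tau_z$, and applying the antisymmetric convention $u(\omega, -b) = -u(\tau_{-b}\omega, b)$ consistently without sign errors. The $L^2(\nu)$ regularity of $\mathscr{L}u$ required for the density argument is, in principle, a separate input, but it is folded into the statement of the lemma and otherwise would follow from Cauchy--Schwarz combined with translation invariance of $\nu$ applied term-by-term.
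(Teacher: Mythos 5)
Your argument is correct. Note that the paper does not actually prove this lemma --- it is imported verbatim from \cite[Lemma 5.3]{BS11} --- so there is no in-paper proof to compare against; your computation is essentially the standard one from that reference. Both halves check out: the change of variables $\omega \mapsto \tau_b\omega$ under translation invariance of $\nu$ correctly yields $(u,\nabla h) = -E_\nu[h\,\mathscr{L}u]$, and the identification $\mathcal{L}^\omega_X\bar u(\omega,x) = (\mathscr{L}u)(\tau_x\omega)$ follows exactly as you describe from shift covariance, the antisymmetry convention $u(\omega,-b)=-u(\tau_{-b}\omega,b)$, and the identities $\omega(x,x+b)=(\tau_x\omega)_b$, $\omega(x,x-b)=(\tau_{-b}\tau_x\omega)_b$. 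One small imprecision: in the present paper's setting the conductances are not uniformly elliptic, and $u\in L^2_{vec}(\nu)$ (i.e.\ $E_\nu[\omega_b\,u(\omega,b)^2]<\infty$) together with Cauchy--Schwarz and $E_\nu[\omega_b]<\infty$ gives only $\mathscr{L}u\in L^1(\nu)$, not $L^2(\nu)$ as your closing remark suggests. This does not harm the equivalence --- testing against bounded local $h$ and concluding $\mathscr{L}u=0$ $\nu$-a.s.\ only requires $L^1$ --- but the $L^2$ membership asserted in the lemma's statement is genuinely an extra hypothesis (automatic in the uniformly elliptic setting of \cite{BS11}) rather than a consequence of $u\in L^2_{vec}(\nu)$.
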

Clearly, all $u \in L^2_\nabla$ are shift-covariant
and have zero mean. A question which naturally arises is whether every
shift-covariant zero-mean $u$ is in $L^2_\nabla$. (Note that this is analogous to asking whether
every closed differential form is exact.) In \cite{BS11} for the case that weights are compacted bounded away from zero and infinity, the answer to this is in the affirmative. However in our case we need one more condition of $u$.
\begin{thm}\label{thmgradient}
	Suppose $\nu$ is ergodic. Suppose $u \in L^2_{vec}$ obeys
	the cycle condition (\ref{conditioncycle}) and $E_\nu u = 0$. Furthermore, suppose $u$ is square integrable for each component in the sense that $\mathbb{E}_{\nu} |u(\omega,x)|^2 < \infty$ for all $x$ with $|x| = 1$. Then $u \in L^2_\nabla$.
\end{thm}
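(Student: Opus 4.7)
The plan is to adapt the orthogonal-decomposition strategy used in \cite[proof of Lemma 5.4]{BS11}, but to perform the decomposition in an enlarged Hilbert space that compensates for the lack of uniform ellipticity. Concretely, I would work in the Hilbert space $\tilde H$ of vector fields $v:\Omega_\omega\times\hat E\to\mathbb{R}$ with inner product
\begin{equation*}
\langle v,w\rangle_{\tilde H}\;:=\;E_\nu\Bigl(\sum_{b\in\hat E}(1+\omega_b)\,v(\omega,b)\,w(\omega,b)\Bigr),
\end{equation*}
and associated norm. The two integrability hypotheses on $u$, namely $u\in L^2_{vec}$ and $\mathbb{E}_\nu|u(\omega,x)|^2<\infty$ for $|x|=1$, say exactly that $u\in\tilde H$. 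Let $\tilde H_\nabla$ denote the $\tilde H$-closure of $\{\nabla h:h\text{ local},\,\nabla h\in\tilde H\}$. Since $\|\cdot\|_{\tilde H}\ge\|\cdot\|_{L^2_{vec}}$, every $\tilde H$-Cauchy sequence of gradients is also $L^2_{vec}$-Cauchy, so $\tilde H_\nabla\subseteq L^2_\nabla$, and it suffices to prove $u\in\tilde H_\nabla$.

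Next I would project $u$ orthogonally in $\tilde H$ to write $u=u_1+u_2$ with $u_1\in\tilde H_\nabla$ and $u_2\perp\tilde H_\nabla$, and then reduce the problem to showing $u_2=0$. Expanding $\langle u_2,\nabla h\rangle_{\tilde H}=0$ and using translation invariance of $\nu$ to shift $h(\tau_b\omega)$ back to $h(\omega)$, one obtains that for $\nu$-a.e.\ $\omega$,
\begin{equation*}
\sum_{b\in\hat E}\bigl[(1+\omega_b)u_2(\omega,b)-(1+(\tau_{-b}\omega)_b)u_2(\tau_{-b}\omega,b)\bigr]=0,
\end{equation*}
which is precisely the statement that the extension $\bar u_2$ is harmonic for the generator associated with the \emph{modified} conductance $1+\omega$.

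I would then verify the remaining hypotheses of Lemma \ref{lemmazerofunction} applied to $\bar u_2$, but with the conductance $1+\omega$ and the pushforward of $\nu$ under $\omega\mapsto 1+\omega$ (which remains translation invariant and ergodic). The cycle condition passes from $u$ to $u_1$ via Lemma \ref{vectorfieldext} (using $\tilde H_\nabla\subseteq L^2_\nabla$) and hence to $u_2$, so $\bar u_2$ is shift-covariant; the membership $u_2\in\tilde H$ supplies both the component-wise $L^2(\nu)$ bound (hypothesis 3) and the weighted $L^2_{vec}$-type bound with weights $1+\omega_b$ (hypothesis 4); translation invariance of $\nu$ forces $E_\nu\nabla h=0$ for every local $h$, hence $E_\nu u_1=0$ and therefore $E_\nu u_2=0$. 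Lemma \ref{lemmazerofunction} then yields $\bar u_2\equiv 0$, so $u_2=0$ and $u=u_1\in\tilde H_\nabla\subseteq L^2_\nabla$.

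The main obstacle is precisely the issue that motivates the move to $\tilde H$. Without uniform ellipticity, the ordinary $L^2_{vec}$-projection of $u$ onto $L^2_\nabla$ need not preserve component-wise square integrability, so $u-Pu$ can fail hypothesis (3) of Lemma \ref{lemmazerofunction} even though $u$ itself satisfies it. Building both integrability requirements into a single norm cures this at the modest cost that the harmonicity one reads off is with respect to the weighted generator corresponding to $1+\omega$ rather than $\omega$; one must then observe that the proof of Lemma \ref{lemmazerofunction} depends only on translation invariance and ergodicity of the underlying conductance law and therefore applies equally to $1+\omega$.
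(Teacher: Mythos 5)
Your identification of the real obstacle is exactly right: without uniform ellipticity the $L^2_{vec}$-orthogonal projection onto $L^2_\nabla$ need not preserve componentwise square integrability, and the idea of projecting instead in the $(1+\omega_b)$-weighted space $\tilde H$, so that the remainder automatically satisfies both integrability hypotheses, is a clean way to package the two assumptions. The reduction itself (that $u_2$ is shift-covariant, has zero mean, lies in $\tilde H$, and is harmonic for the conductance $1+\omega$) is carried out correctly.

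The gap is that your final step is circular. You conclude $u_2=0$ by invoking Lemma \ref{lemmazerofunction} for the conductance law $1+\omega$, remarking that its proof ``depends only on translation invariance and ergodicity.'' But the only proof of Lemma \ref{lemmazerofunction} in this paper (and of its antecedent, Lemma 3.3 in \cite{BS11}) is a two-line consequence of Theorem \ref{thmgradient} itself: one first uses Theorem \ref{thmgradient} to place $g$ in $L^2_\nabla$, then uses harmonicity and Lemma B.6 to place it in $(L^2_\nabla)^\perp$. So applying Lemma \ref{lemmazerofunction} to the environment $1+\omega$ presupposes Theorem \ref{thmgradient} for that environment --- which is precisely the statement ``$u_2\in\tilde H_\nabla$'' that your decomposition was supposed to establish. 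Iterating your reduction only shifts the conductance to $2+\omega$, $3+\omega$, and so on; the regress never terminates, and since $1+\omega$ is bounded below but not above, you cannot close the loop by citing the uniformly elliptic case of \cite[Theorem 5.4]{BS11} either. The substantive content of the theorem --- that a curl-free, zero-mean, square-integrable field over an ergodic environment which is orthogonal to all gradients must vanish --- is therefore assumed rather than proved. The paper's logical order is the reverse of yours: Theorem \ref{thmgradient} is proved first by rerunning the direct argument of \cite[Theorem 5.4]{BS11}, replacing each use of the uniform-ellipticity equivalence between the weighted and unweighted $L^2$ norms by the two separate integrability hypotheses, and Lemma \ref{lemmazerofunction} is then deduced as a corollary. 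To repair your proof you would need to supply that direct argument (in $L^2_{vec}$ or in your $\tilde H$) rather than appeal to Lemma \ref{lemmazerofunction}.
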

\begin{proof}
The proof of Theorem 5.4 in \cite{BS11} uses the equivalence that $u \in L^2_{vec}(\nu)$ if and only if all of its components are in $L^2(\nu)$ when $\omega_b$'s is uniformly bounded from below and above. Here, the equivalence is replaced by two independent assumptions
	\begin{enumerate}
		\item $u \in L^2_{vec}$, and
		\item $u$ is square integrable for each component.
	\end{enumerate}
The rest of the proof is the same of that of that of Theorem 5.4 in \cite{BS11}.
\end{proof}

We still have to supply the proof of Lemma \ref{lemmazerofunction}.
\begin{proof}[Proof of Lemma \ref{lemmazerofunction}]
	Since $g$ is square integrable as a vector field, $g \in L^2_{vec}$. As g is shift-covariant, square integrable for each component and has zero expectation, Theorem \ref{thmgradient} implies
	that $g \in L^2_\nabla$. However, $g$ is also harmonic and so, in turn, Lemma 5.3 forces $g \in
	(L^2_{vec})^\perp$. Thus, $g = 0$, as desired.
\end{proof}

\newpage
\bibliography{references}
\bibliographystyle{plain}

\end{document}